\algnewcommand\algorithmicinput{\textbf{Input:}}
\algnewcommand\Input{\item[\algorithmicinput]}
\algnewcommand\algorithmicoutput{\textbf{Output:}}
\algnewcommand\Output{\item[\algorithmicoutput]}
\theoremstyle{thmstyleone}%
\newtheorem{theorem}{Theorem}[section]
\newtheorem{proposition}[theorem]{Proposition}%
\newtheorem{lemma}[theorem]{Lemma}%
\newtheorem{corollary}[theorem]{Corollary}%
\newtheorem{remark}[theorem]{Remark}%
\newtheorem{definition}[theorem]{Definition}%
\numberwithin{equation}{section}
\providecommand{\dd}{\ensuremath{\mathrm{d}}}
\providecommand{\e}{\varepsilon}
\providecommand{\R}{\ensuremath{\mathbb{R}}}
\providecommand{\N}{\ensuremath{\mathbb{N}}}
\newcommand{\Cheb}{Cheby\v{s}ev }
\newcommand{\realiz}{\mathrm{R}}
\newcommand{\spike}{\mathrm{S}}
\newcommand{\depth}{L}
\newcommand{\size}{M}
\newcommand{\Parallel}[1]{{\rm P}(#1)} 
\newcommand{\FParallel}[1]{{\rm FP}(#1)}
\DeclareMathOperator{\Id}{Id}
\newcommand{\ch}{\mathrm{Cheb}}
\newcommand{\norm}[2][]{\| #2 \|_{#1}} 
\newcommand{\snorm}[2][]{| #2 |_{#1}} 
\newcommand{\normc}[2][]{\left\| #2 \right\|_{#1}} 
\newcommand{\snormc}[2][]{\left| #2 \right|_{#1}} 
\newcommand{\ceil}[1]{\lceil #1 \rceil}
\newcommand{\floor}[1]{\lfloor #1 \rfloor}
\newcommand{\idnna}[4]{\Phi^{\Id}_{#1,#2,#3,#4}} 
\newcommand{\prodlabel}{\operatorname{Prod}}
\newcommand{\prodnna}[3]{\Phi^{\prodlabel}_{#2,#3}} 
\begin{document}

\title{Neural Networks for Singular Perturbations}


\author[1]{\fnm{J. A. A.} \sur{Opschoor}}\email{joost.opschoor@sam.math.ethz.ch}
\author*[1]{\fnm{Ch.} \sur{Schwab}}\email{christoph.schwab@sam.math.ethz.ch}
\author[2]{\fnm{C.} \sur{Xenophontos}}\email{xenophontos.christos@ucy.ac.cy}


\affil[1]{\orgdiv{Seminar for Applied Mathematics}, \orgname{ETH Z\"{u}rich}, \orgaddress{\street{R\"{a}mistrasse 101}, \city{Z\"urich}, \postcode{8092}, \country{Switzerland}}} 

\affil[2]{\orgdiv{Department of Mathematics and Statistics}, \orgname{University of Cyprus}, \orgaddress{\street{P.O.~BOX 20537}, \city{Nicosia}, \postcode{1678}, \country{Cyprus}}} 

\abstract{We prove deep neural network (DNN for short) 
expressivity rate bounds for
solution sets of
a model class of singularly perturbed, elliptic two-point
boundary value problems, in Sobolev norms, on the bounded interval $(-1,1)$. 
We assume that the given source term and reaction
coefficient are analytic in $[-1,1]$.

We establish expression rate bounds in Sobolev norms in terms
of the NN size which are uniform
with respect to the singular perturbation parameter
for several classes of DNN architectures.
In particular,
ReLU NNs, spiking NNs, and $\tanh$- and sigmoid-activated NNs.
The latter activations can represent 
``exponential boundary layer solution features'' 
explicitly, in the last hidden layer of the DNN, 
i.e. in a shallow subnetwork, and afford
improved robust expression rate bounds in terms of the NN size.

We prove that all DNN architectures allow 
\emph{robust exponential solution expression} 
in so-called `energy' as well as in `balanced' Sobolev norms,
for analytic input data.}

\keywords{Singular Perturbations, Exponential Convergence, ReLU Neural Networks, Spiking Neural Networks, Tanh Neural Networks}

\pacs[MSC Classification]{34B08, 34D15, 65L11}

\maketitle

\section{Introduction}
\label{sec:intro} 
Singular perturbations are ubiquitous in engineering
and in the sciences.
Let us mention only solid mechanics (theory of thin solids,
such as beams, plates and shells), fluid mechanics 
(viscous flows at large Reynolds number), and electromagnetics
(eddy current problems in lossy media).
In all these applications, PDEs depend on a small 
parameter $\e \in (0,1]$ in physically relevant regimes of input data.
Standard numerical approximation methods (Finite Volume, 
Finite Difference or Finite Element) generally do \emph{not}
perform uniformly w.r. to the physical perturbation parameter:
in general, a so-called \emph{scale resolution} condition
relating the discretization parameters (such as the meshwidth
$h$ in Finite Element Methods) and $\e$ needs to hold.

A common trait of singularly perturbed, \emph{elliptic} 
PDE problems is an additive decomposition
of PDE solutions into a regular (typically analytic)
part and into singular components. 
See, e.g., \cite{TemamSgPert} and the references there.
The regular solution part $u^S_\e$ 
may depend on the singular perturbation parameter $\e$, 
but derivatives of the smooth part $u^S_\e$
satisfy bounds in Sobolev norms which are uniform in terms of $\e$. 
We refer to \cite{TemamSgPert} and the references there for 
examples.

The singular perturbation part $u^{BL}_\e$, on the contrary, 
is not uniformly smooth in terms of $\e$. 
Its $k$-th derivative typically grows as $O(\e^{-k})$.
Due to their exponential decay with respect to the distance
to the boundary, they are referred to as \emph{boundary layers}, 
and denoted herein by $u^{BL}_\e$.
\subsection{Contributions}
\label{sec:Contrib}
We prove that exponential boundary layer functions $u^{BL}_\e$
which arise in asymptotic expansions of singularly perturbed
elliptic boundary value problems can be
expressed at exponential (w.r. to the NN size) rates
by various DNN architectures \emph{robustly}, i.e.
\emph{uniformly with respect to the perturbation parameter}, 
in various Sobolev norms. 
The DNNs considered are strict ReLU NNs 
(Propositions~\ref{prop:relubalanced} and \ref{prop:reluexp}),
spiking NNs 
(Theorem~\ref{thm:spikingbalanced}),
and 
$\tanh$-activated NNs (Theorem~\ref{thm:nntanhsingpert}).
\subsection{Layout}
\label{sec:Layout}
Section~\ref{sec:model} introduces 
a model singularly perturbed reaction-diffusion
two-point boundary value problem, 
specifies the assumptions on 
the problem data and recaps several results on the analytic regularity
and the asymptotic behavior of its solutions.

Section~\ref{sec:approx} addresses mostly known FE approximation 
results, in particular featuring so-called 
\emph{robust exponential convergence rates} 
for the parametric solutions.

Section~\ref{sec:nn} 
introduces assumptions on the architecture of the NNs.

Section~\ref{sec:relunn} shows robust exponential convergence 
of strict ReLU NNs, i.e. neural networks with 
only ReLU activations: 
Proposition~\ref{prop:relubalanced} 
states that emulation accuracy $\tau>0$
in a so-called ``balanced norm'' can be achieved 
uniformly w.r. to the perturbation parameter
$0< \e \leq 1$ 
with a ReLU NN of 
size $O(|\log(\tau)|^2)$ and of
depth $O(|\log(\tau)|(1+|\log(|\log(\tau)|))$, i.e.
the constants which are implicit in $O(\; )$ are
independent of $\e$.

Section~\ref{sec:spiking}
establishes a corresponding conclusion for spiking NNs
using a ReLU NN-to-spiking NN conversion algorithm 
from \cite{SWBCPG2022}.

Assuming a constant reaction coefficient function,
stronger robust exponential solution expression rate bounds 
by strict $\tanh$ NNs are proved in Section \ref{sec:tanhanal}.
We prove in Theorem~\ref{thm:nntanhsingpert} 
that to achieve expression error $\tau > 0$ in balanced norms,
$\tanh$-activated NNs of  
depth $O(\log|\log(\tau)|)$ 
and 
size $O(|\log(\tau)|)$ are sufficient.
A corresponding result for the so-called sigmoid activation
is shown in Appendix \ref{sec:GenAct}.

Appendix~\ref{sec:nntanhunivarcheb} contains statement and
proof of expression rate bounds for \Cheb polynomials by 
strict $\tanh$ NNs which are used in various places
and are of independent interest.

The generalization of this \Cheb polynomial emulation
to general smooth activation functions 
is established in Appendix \ref{sec:GenAct}.
\section{The Model Problem and its Regularity}
\label{sec:model}
Consider the following linear,
singularly perturbed, reaction-diffusion 
boundary value problem (BVP): 
find $u_{\e}(x)$ such that
\begin{eqnarray}
-\e^2 u''_{\e}(x) + b(x)u_{\e}(x) &=& f(x) \; , \; x \in I = (-1,1) \; , \label{eq:de} \\
u_{\e}(\pm 1) &=& 0, \label{eq:bc}
\end{eqnarray}
where $\e \in (0,1]$ is a small parameter that can approach zero, and $b(x), f(x)$ are given analytic functions on
$\overline{I}=[-1,1]$, with $b(x) \ge \underline{b} > 0$ on $\overline{I}$ for some constant $\underline{b}$. 
Moreover, we assume there exist positive constants $C_f, K_f, C_b, K_b$ such that $\forall \; n \in \N_0$, there holds
\begin{equation}
\label{eq:analyticfb}
\Vert f^{(n)} \Vert_{L^{\infty}(I)} \leq C_f K^n_f n! \: , \: \Vert b^{(n)} \Vert_{L^{\infty}(I)} \leq C_b K^n_b n! .
\end{equation}
The above problem was studied in \cite{Melenk1997}
where the following result was established.
\begin{theorem} [{\cite[Thm.~{1}]{Melenk1997}}]
\label{thm:analregul}
For $0 < \e \leq 1$, there exists a unique solution $u_{\e}\in H^1_0(I)$ 
of (\ref{eq:de})--(\ref{eq:bc}). 
There exist positive constants $C, K$, independent of $\e$, such that
\begin{equation}
\label{eq:analyticu}
\Vert u_{\e}^{(n)} \Vert_{L^2(I)} \leq C K^n \max\{n, \e^{-1} \}^n \; \forall \; n \in \N_0.
\end{equation}
\end{theorem}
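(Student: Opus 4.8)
The plan is to establish existence, uniqueness and $C^\infty$-smoothness of $u_\e$ by standard variational/ODE arguments, and then to prove the quantitative estimate \eqref{eq:analyticu} by induction on $n$, running the induction with the $\e$-dependent weight $\max\{n,\e^{-1}\}^n$ built in from the outset, so that the factor $\e^{-2}$ generated each time one differentiates the ODE is compensated automatically.

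First I would obtain the weak solution: the bilinear form $a_\e(v,w):=\e^2(v',w')_{L^2(I)}+(bv,w)_{L^2(I)}$ is continuous on $H^1_0(I)$ and, since $b\ge\underline{b}>0$, coercive with constant $\min\{\e^2,\underline{b}\}>0$, so the Lax--Milgram lemma yields a unique $u_\e\in H^1_0(I)$ solving \eqref{eq:de}--\eqref{eq:bc}. Testing with $w=u_\e$ and using $b\ge\underline{b}$ gives the $\e$-explicit a priori bounds $\norm[L^2(I)]{u_\e}\le\underline{b}^{-1}\norm[L^2(I)]{f}$ and $\e\norm[L^2(I)]{u_\e'}\le\underline{b}^{-1/2}\norm[L^2(I)]{f}$, which, since $\e^{-1}\ge 1$, already establish \eqref{eq:analyticu} for $n=0$ and $n=1$ (the base cases of the induction). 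Smoothness of $u_\e$ on $[-1,1]$ then follows by bootstrapping from $u_\e''=\e^{-2}(b u_\e-f)$: as $b,f\in C^\infty([-1,1])$ and $H^k(I)$ is a multiplicative algebra for $k\ge 1$, one gets $u_\e\in H^k(I)$ for all $k$, hence $u_\e\in C^\infty([-1,1])$, so the derivatives below are classical.

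Next, for $n\ge 2$ I would differentiate \eqref{eq:de} $(n-2)$ times and apply the Leibniz rule to obtain the recursion
\[
\e^2 u_\e^{(n)} \;=\; \sum_{k=0}^{n-2}\binom{n-2}{k}\,b^{(k)}\,u_\e^{(n-2-k)}\;-\;f^{(n-2)}.
\]
Writing $M_n:=\max\{n,\e^{-1}\}$ and assuming inductively that $\norm[L^2(I)]{u_\e^{(m)}}\le CK^m M_m^{\,m}$ for all $m<n$, I would estimate the right-hand side in $L^2(I)$ using \eqref{eq:analyticfb}, the elementary inequalities $\binom{n-2}{k}k!=(n-2)!/(n-2-k)!\le M_n^{\,k}$, $(n-2)!\le M_n^{\,n-2}$ and $M_{n-2-k}\le M_n$, together with a geometric-series bound for $\sum_{k}K_b^k K^{n-2-k}$. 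The decisive point is that $M_n\ge\e^{-1}$ for \emph{every} $n$, hence $M_n^{\,2}\ge\e^{-2}$ and the prefactor $\e^{-2}$ in the recursion is exactly absorbed; taking $K$ of order $\max\{K_b,\sqrt{C_b},K_f,1\}$ and then $C$ large enough to dominate the $f$-contribution and the base cases closes the induction and yields \eqref{eq:analyticu}.

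The main obstacle is purely bookkeeping: one must verify that a \emph{single} pair $(C,K)$ works simultaneously for all $\e\in(0,1]$ and all $n\in\N_0$, with no residual $\e$- or $n$-dependence slipping in through the constants. The weight $M_n^{\,n}=\max\{n,\e^{-1}\}^n$ is exactly the device that makes this work, since it interpolates between the $\e^{-n}$ growth caused by the boundary layers (for $n\lesssim\e^{-1}$) and the factorial-type growth $\sim n!$ of the analytic smooth part (for $n\gtrsim\e^{-1}$), so both regimes are handled by one estimate; alternatively, one may treat $\e\in[\e_0,1]$ by classical analytic regularity for linear ODEs with analytic coefficients and $\e\in(0,\e_0]$ by the argument above, and then take the larger of the two resulting constant pairs. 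This is the content of \cite[Thm.~1]{Melenk1997}, to which we refer for the full details.
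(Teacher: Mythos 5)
The paper does not actually prove Theorem~\ref{thm:analregul}: it is stated as a citation to \cite[Thm.~1]{Melenk1997} and used as a black box, with no proof supplied in the text. There is therefore no ``paper's own proof'' to compare against. That said, your sketch is sound and reproduces the argument that Melenk gives in the cited reference in all essential respects: Lax--Milgram with coercivity constant $\min\{\e^2,\underline{b}\}$ for existence and uniqueness, the energy estimate $\|u_\e\|_{L^2}\le\underline{b}^{-1}\|f\|_{L^2}$ and $\e\|u_\e'\|_{L^2}\le\underline{b}^{-1/2}\|f\|_{L^2}$ for the base cases $n=0,1$, bootstrapping for $C^\infty$-regularity, and then the Leibniz-rule induction on $n$ applied to $\e^2 u_\e^{(n)}=(bu_\e)^{(n-2)}-f^{(n-2)}$. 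The key mechanism you identify --- that $M_n:=\max\{n,\e^{-1}\}$ satisfies $M_n^2\ge\e^{-2}$ for \emph{every} $n$, so the $\e^{-2}$ generated per differentiation is absorbed by the weight $M_n^n$ --- is indeed the crux of why a single $(C,K)$ works uniformly in $\e$. Your combinatorial bounds $\binom{n-2}{k}k!\le M_n^k$, $(n-2)!\le M_n^{n-2}$, $M_{n-2-k}\le M_n$ all check out, and the closing condition on $(C,K)$ (take $K$ to dominate $K_b,K_f$ and a term involving $C_b$, then $C$ large enough for the $f$-term and base cases) is correct. The one thing worth flagging: when you bound $(bu_\e)^{(n-2)}$ you need $\|b^{(k)}u_\e^{(n-2-k)}\|_{L^2}\le\|b^{(k)}\|_{L^\infty}\|u_\e^{(n-2-k)}\|_{L^2}$, which is exactly why \eqref{eq:analyticfb} bounds $b^{(k)}$ in $L^\infty$, not $L^2$; you use this implicitly and correctly. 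Overall the proposal is a faithful reconstruction of the external proof.
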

The above corresponds to classical differentiability and it is useful in the case when $\e$ is large. If one
uses the method of matched asymptotic expansions, a more refined regularity result can be obtained, as stated 
below.
\begin{proposition}[{\cite{Melenk1997}}]
\label{prop:decomp}
Let  $u_{\e} \in H^1_0(I)$ be the solution of (\ref{eq:de})--(\ref{eq:bc}) and assume (\ref{eq:analyticfb}) holds.
Then,  $u_{\e}$ may be decomposed as
\begin{equation}
\label{eq:decomp}
u_{\e} 
	= u^S_{\e} + u^{BL}_{\e} + u^R_{\e}
	= u^S_{\e} + u^{+}_{\e} + u^{-}_{\e} + u^R_{\e},
\end{equation}
where $u^S_{\e}$ denotes the smooth part, $u^{\pm}_{\e}$ denote the boundary layers at the two endpoints,
and $u^R_{\e}$ denotes the remainder. 
Furthermore, 
there exist positive constants $C_1, K_1, C_2, K_2, C_3, K_3$ independent of $\e$, 
such that 
\begin{align}
\left\Vert \left( u^S_{\e} \right) ^{(n)} \right\Vert_{L^2(I)} 
	\leq &\, C_1 K_1^n n! ,
\qquad
\text{ for all } n \in \N_0,
\label{eq:smooth} 
\\
\left\vert \left( u^{\pm}_{\e} \right)^{(n)}(x) \right\vert 
	\leq &\, C_2 K_2^n e^{-\sqrt{\underline{b}} (1 \mp x) /\e }
	\max\{n, \e^{-1} \}^n,
\qquad
\forall x \in \overline{I} , n \in \N_0,
\label{eq:BL} 
\\
\left\Vert \left( u^R_{\e} \right) ^{(n)} \right\Vert_{L^2(I)} 
	\leq &\, C_3 \e^{2-n} e^{-K_3 / \e},
\qquad
\text{ for all } 
n \in \{0, 1, 2\}.
\label{eq:rem}
\end{align}
\end{proposition}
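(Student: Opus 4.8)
The plan is to use the classical method of matched asymptotic expansions, as in \cite{Melenk1997}: build an outer (smooth) expansion, boundary layer corrections at $x=\pm1$, and control the residual via the coercivity of $L_\e := -\e^2(\cdot)'' + b\,(\cdot)$ on $H^1_0(I)$. \emph{Smooth part.} Insert the formal series $\sum_{i\ge0}\e^{2i}w_i$ into \eqref{eq:de} and equate like powers of $\e^2$: this gives $b\,w_0=f$ and $b\,w_i=w_{i-1}''$, i.e.\ $w_0=f/b$ and $w_i=(1/b)\,w_{i-1}''$. Since $b\ge\underline b>0$ and $b,f$ are analytic on $\overline I$ with \eqref{eq:analyticfb}, so is $1/b$, and a standard induction on $i$ (propagating Cauchy estimates through one division and two differentiations per step) yields constants $C,K>0$, independent of $\e$, with $\Vert w_i^{(n)}\Vert_{L^\infty(I)}\le C\,K^{2i+n}(2i+n)!$ for all $i,n\in\N_0$. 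Then set $u^S_\e:=\sum_{i=0}^{M}\e^{2i}w_i$ with $M=M(\e):=\lfloor c/\e\rfloor$ for a small $c>0$; a binomial/Stirling estimate together with geometric summation over $i$ (using $\e^{2i}K^{2i}(2i)!\le(2cK)^{2i}$ for $i\le M$ and $c$ small) yields \eqref{eq:smooth}.

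\emph{Boundary layers.} Near $x=1$ use the stretched variable $\xi=(1-x)/\e$ together with the Taylor expansion $b(1-\e\xi)=\sum_k\e^k\xi^k(-1)^kb^{(k)}(1)/k!$, and substitute $\sum_{i\ge0}\e^i\hat u_i^+(\xi)$. At each order this produces a constant-coefficient ODE $-(\hat u_i^+)''+b(1)\hat u_i^+=g_i$ with $g_0=0$ and $g_i$ a polynomial in $\xi$ times $e^{-\sqrt{b(1)}\xi}$ assembled from $\hat u_0^+,\dots,\hat u_{i-1}^+$ and the $b^{(k)}(1)$; the decaying solution is $\hat u_i^+(\xi)=p_i(\xi)e^{-\sqrt{b(1)}\xi}$, with $p_i$ a polynomial whose degree grows linearly in $i$ and whose coefficients obey Cauchy-type bounds. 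The free constants in the $p_i$ are fixed, order by order, so that $u^S_\e+u^+_\e$ vanishes at $x=1$. Because $b(1)\ge\underline b$ one can bound $|p_i(\xi)|e^{-\sqrt{b(1)}\xi}\le C K^i e^{-\sqrt{\underline b}\xi}$, and each $x$-derivative costs a factor $O(1/\e)$; truncating $u^+_\e:=\sum_{i=0}^{M}\e^i\hat u_i^+((1-x)/\e)$ and summing as above gives \eqref{eq:BL}, the factor $\max\{n,\e^{-1}\}^n$ absorbing both the analytic growth of the correction coefficients and the differentiation in $\xi$. The layer $u^-_\e$ at $x=-1$ is constructed symmetrically; note $u^\pm_\e=O(e^{-c/\e})$ at the opposite endpoint, so $u^S_\e+u^+_\e+u^-_\e$ matches the boundary data \eqref{eq:bc} up to an $O(e^{-c/\e})$ error, which is removed by subtracting an explicit, exponentially small affine function.

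\emph{Remainder.} Set $u^R_\e:=u_\e-u^S_\e-u^+_\e-u^-_\e$, which lies in $H^1_0(I)$ after the affine correction above. By construction $L_\e u^R_\e=r_\e$, where $r_\e$ collects the outer truncation residual $\e^{2(M+1)}w_M''$, the layer truncation residuals, and the Taylor remainders of $b$ near $\pm1$; with $M=\lfloor c/\e\rfloor$ and $c$ small, the bounds above give $\Vert r_\e\Vert_{L^\infty(I)}\le C\e^2 e^{-K_3/\e}$ for some $\e$-independent $K_3>0$. The coercivity estimate $\e^2\Vert w'\Vert_{L^2(I)}^2+\underline b\Vert w\Vert_{L^2(I)}^2\le\Vert L_\e w\Vert_{L^2(I)}\Vert w\Vert_{L^2(I)}$ for $w\in H^1_0(I)$ then yields $\Vert u^R_\e\Vert_{L^2(I)}\le C\e^2 e^{-K_3/\e}$ and $\Vert(u^R_\e)'\Vert_{L^2(I)}\le C\e\, e^{-K_3/\e}$, while reading $\e^2(u^R_\e)''=b\,u^R_\e-r_\e$ off the ODE gives $\Vert(u^R_\e)''\Vert_{L^2(I)}\le C e^{-K_3/\e}$; these are exactly \eqref{eq:rem} for $n\in\{0,1,2\}$. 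Existence and uniqueness of $u_\e$ (hence of $u^R_\e$) are furnished by Theorem~\ref{thm:analregul}.

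\emph{Main obstacle.} The delicate point is keeping every constant independent of $\e$: one must verify that the $w_i$ and the polynomials $p_i$ stay analytic with geometrically controlled coefficients over $M\sim1/\e$ recursion steps, choose the truncation order $M=\lfloor c/\e\rfloor$ (and the constant $c$) so that the factorial growth of the asymptotic coefficients is beaten by the powers of $\e$ and a genuinely $O(\e^2 e^{-c/\e})$ residual is left, and handle the variable reaction coefficient $b$ in the layer equations — the polynomial-times-exponential structure of the corrections and the (exponentially weak) coupling between the two endpoints — without introducing any $\e$-dependence. Everything else reduces to routine use of the coercivity of $L_\e$.
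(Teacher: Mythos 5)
The paper's ``proof'' of Proposition~\ref{prop:decomp} is a one-line citation to Section~2 of \cite{Melenk1997}; no argument is given in the paper itself. Your sketch is a faithful reconstruction of the argument in that reference: outer expansion $\sum_i\e^{2i}w_i$ with $w_0=f/b$, $w_i=w_{i-1}''/b$; inner expansions in the stretched variables $\xi=(1\mp x)/\e$ producing polynomial-times-exponential correctors; truncation at order $M\sim1/\e$; and a coercivity estimate for $L_\e=-\e^2(\cdot)''+b(\cdot)$ to close the remainder. This is exactly the matched-asymptotics strategy Melenk uses, so your route is essentially the same as the cited proof, just spelled out rather than merely referenced.

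Two remarks on details you compress. First, ``a standard induction (propagating Cauchy estimates \ldots) yields $\Vert w_i^{(n)}\Vert_{L^\infty}\le C K^{2i+n}(2i+n)!$'' is in fact the technical heart of \cite{Melenk1997} (Lemmas 4--5 there): the induction must track derivatives of $1/b$ and the composition of two differentiations per step, and showing that the same $C,K$ work for every $i$ up to $O(1/\e)$ is nontrivial; your ``main obstacle'' paragraph correctly flags this. Second, the bound \eqref{eq:BL} uses the decay rate $\sqrt{\underline b}$ rather than $\sqrt{b(\pm1)}$; absorbing the polynomial factors $p_i(\xi)$ into an exponential with that slower rate needs either $b(\pm1)>\underline b$ strictly or an explicit constant-adjustment argument for the boundary case — worth a sentence if this were written out. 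Also note your $H^1$ estimate for $u^R_\e$ reads $\Vert(u^R_\e)'\Vert_{L^2}\le C\e\,e^{-K_3/\e}$, which is stronger than the claimed $C\e\,e^{-K_3/\e}=C\e^{2-1}e^{-K_3/\e}$; that is consistent with \eqref{eq:rem}. Overall the proposal is correct in outline and matches the reference the paper itself points to.
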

\begin{proof}
This follows from the results in Section 2 of \cite{Melenk1997}.
\end{proof}

As can be seen from \eqref{eq:analyticu},
the norms of the derivatives of $u_{\e}$ may grow when $\e\to 0$.
For $u_{\e}$ and its first derivative,
a more precise estimate of this $\e$-dependence 
is stated in the following lemma.
\begin{lemma}[{\cite{MX2016}}]
\label{lem:normsue}
Let  $u_{\e}$ be the solution $u_{\e}$ of (\ref{eq:de})--(\ref{eq:bc}) and assume (\ref{eq:analyticfb}) holds.
Then, there exists a constant $C>0$, independent of $\e$,
such that
\begin{align*}
\Vert u_{\e} \Vert_{L^2(I)}
	\leq C
	,
	\qquad
\Vert u_{\e}' \Vert_{L^2(I)}
	\leq C \e^{-1/2}
	,
	\qquad
\Vert u_{\e} \Vert_{L^\infty(I)}
	\leq C.
\end{align*}
\end{lemma}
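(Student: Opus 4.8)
The plan is to combine three standard ingredients: a variational energy estimate for the $L^2$-bound on $u_\e$, a comparison (maximum principle) argument for the $L^\infty$-bound, and the asymptotic decomposition of Proposition~\ref{prop:decomp} to recover the \emph{sharp} $\e^{-1/2}$ scaling of $\|u_\e'\|_{L^2(I)}$, which the energy method alone does not see.

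First I would test \eqref{eq:de} against $u_\e$ itself, integrate over $I$, and use $u_\e(\pm 1)=0$ together with $b\ge\underline{b}>0$; Cauchy--Schwarz and Young's inequality then give
\[
\e^2 \|u_\e'\|_{L^2(I)}^2 + \underline{b}\,\|u_\e\|_{L^2(I)}^2 \;\le\; \tfrac{1}{\underline{b}}\,\|f\|_{L^2(I)}^2 .
\]
This yields $\|u_\e\|_{L^2(I)}\le \|f\|_{L^2(I)}/\underline{b}=:C$ at once, but for the derivative only the pessimistic bound $\|u_\e'\|_{L^2(I)}\le C\e^{-1}$. For the sup-norm I would argue by comparison: the constant $w\equiv \|f\|_{L^\infty(I)}/\underline{b}$ satisfies $-\e^2 w'' + b w = b w \ge \|f\|_{L^\infty(I)}\ge f$ in $I$ and $w\ge 0 = u_\e$ on $\partial I$, so the maximum principle for $-\e^2(\cdot)''+b(\cdot)$ (valid since $b\ge 0$) gives $|u_\e|\le w$ on $\overline I$, i.e. $\|u_\e\|_{L^\infty(I)}\le \|f\|_{L^\infty(I)}/\underline{b}$.

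It remains to improve $\|u_\e'\|_{L^2(I)}\le C\e^{-1}$ to $C\e^{-1/2}$. Here I would insert the decomposition \eqref{eq:decomp} and estimate the $H^1$-seminorm of each summand separately. By \eqref{eq:smooth} with $n=1$ the smooth part contributes $\|(u^S_\e)'\|_{L^2(I)}\le C_1 K_1$, and by \eqref{eq:rem} with $n=1$ the remainder contributes $\|(u^R_\e)'\|_{L^2(I)}\le C_3\,\e\, e^{-K_3/\e}$, both $O(1)$ uniformly in $\e$. For the layer parts, \eqref{eq:BL} with $n=1$ and $0<\e\le 1$ gives $|(u^{\pm}_\e)'(x)|\le C_2 K_2\,\e^{-1} e^{-\sqrt{\underline{b}}(1\mp x)/\e}$; squaring, integrating over $I$, and substituting $t=(1\mp x)/\e$ turns $\int_I \e^{-2} e^{-2\sqrt{\underline{b}}(1\mp x)/\e}\,\dd x$ into $\e^{-1}\int_0^{2/\e} e^{-2\sqrt{\underline{b}}t}\,\dd t\le \e^{-1}/(2\sqrt{\underline{b}})$, whence $\|(u^{\pm}_\e)'\|_{L^2(I)}\le C\,\e^{-1/2}$. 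The triangle inequality over the four terms then delivers $\|u_\e'\|_{L^2(I)}\le C\,\e^{-1/2}$. (The same decomposition re-proves the $L^2$- and $L^\infty$-bounds on $u_\e$ as well, since the layer $L^2$-norms are $O(\e^{1/2})$ and the layer sup-norms are $O(1)$ by \eqref{eq:BL} with $n=0$.)

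The only real obstacle is exactly this gap between the naive variational bound ($\e^{-1}$) and the optimal one ($\e^{-1/2}$): one cannot control $u_\e'$ in a single stroke and must exploit the exponential localization of $u^{\pm}_\e$ encoded in the pointwise estimates \eqref{eq:BL}. Everything else reduces to Young's inequality, a textbook maximum principle, and an elementary exponential integral.
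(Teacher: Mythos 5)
Your proof is correct, and it reaches the statement by a somewhat different, more self-contained route than the paper. The paper's own proof also runs through the decomposition \eqref{eq:decomp}, but it handles each summand as follows: for $u^S_\e$ and $u^R_\e$ it applies the Gagliardo--Nirenberg-type interpolation inequality $\|w\|_{L^\infty(I)}\le C\|w\|_{L^2(I)}^{1/2}\|w\|_{H^1(I)}^{1/2}$ to get the sup-norm bounds, and for the layer parts $u^{\pm}_\e$ it simply cites \cite[Eq.\ (2.19)]{MX2016} (recalled as \eqref{eq:blnorms}), which bundles $\e^{-1/2}\|u^{\pm}_\e\|_{L^2}$, $\e^{1/2}\|(u^{\pm}_\e)'\|_{L^2}$ and $\|u^{\pm}_\e\|_{L^\infty}$ into a single $\e$-uniform constant. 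You instead integrate the pointwise bounds \eqref{eq:BL} with $n=0,1$ directly over $I$, using the substitution $t=(1\mp x)/\e$; this reproduces exactly the same $\e$-scalings as \eqref{eq:blnorms}, so the detour through \cite{MX2016} is in fact avoidable (a small point worth knowing). You also bypass the decomposition entirely for the $L^\infty$-bound on the full solution, using a weak maximum principle / comparison function $w\equiv\|f\|_{L^\infty(I)}/\underline{b}$, which is cleaner than assembling sup-norm bounds for the four pieces. Your initial energy estimate giving $\|u_\e\|_{L^2}\le C$ and the suboptimal $\|u_\e'\|_{L^2}\le C\e^{-1}$ is not in the paper's proof, but it is correct and motivates well why the decomposition is needed for the sharp $\e^{-1/2}$ rate.
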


\begin{proof}
From Proposition \ref{prop:decomp}, 
it follows that 
there exist constants $C_1,K_1>0$ independent of $\e$ such that,
for every $0<\e\leq 1$ and for every $n\in \N_0$
holds
\begin{align*}
\norm[L^2(I)]{ (u^S_{\e})^{(n)} } \leq C_1 K_1^{n} n! 
\;.
\end{align*}
This can be combined with the interpolation inequality\footnote{Follows from Cauchy-Schwarz and
$(u(x))^2 - (u(y))^2 
= 
\int_y^x (u^2)'(\xi)d\xi$ 
for $-1\leq y<x \leq 1$ and $u\in H^1(I)$.}
\begin{align*}
\norm[L^\infty(I)]{ u^S_{\e} } 
	\leq C \norm[L^2(I)]{ u^S_{\e} }^{1/2} \norm[H^1(I)]{ u^S_{\e} }^{1/2}
\end{align*}
to obtain
$\norm[L^\infty(I)]{ u^S_{\e} } \leq C$,
for some $C>0$ independent of $\e$.

Similarly, 
$\norm[L^2(I)]{ u^R_{\e} } \leq C_3$ and
$\norm[L^2(I)]{ (u^R_{\e})' } \leq C_3$
imply that
$\norm[L^\infty(I)]{ u^R_{\e} } \leq C$,
for $C>0$ independent of $\e$.

Finally,
for the boundary layers we use
\cite[Equation (2.19)]{MX2016},
which is a sharper bound in terms of $\e$ 
than \eqref{eq:BL} in Proposition \ref{prop:decomp}.
It states that
\begin{align}
\label{eq:blnorms}
\e^{-1/2} \Vert u^{\pm}_{\e} \Vert_{L^2(I)}
+ \e^{1/2} \Vert (u^{\pm}_{\e})' \Vert_{L^2(I)}
+ \Vert u^{\pm}_{\e} \Vert_{L^\infty(I)}
	\leq C
	.
\end{align}

Combining these estimates for the terms in \eqref{eq:decomp}
finishes the proof.
\end{proof}

\begin{remark}
\label{rem:blexp}
In the case of constant coefficients, i.e. $b(x) = b \in \R$, $b > 0$ in (\ref{eq:de}), 
the boundary layer parts of the solution may
be explicitly obtained, 
as was the case in \cite[Theorem 2.1]{SS1996}.
Denote again by $u_{\e}$ the solution of (\ref{eq:de})--(\ref{eq:bc}) and assume (\ref{eq:analyticfb}) holds.

Then,  $u_{\e}$ may be decomposed as
\begin{equation}
\label{eq:decompexplicit}
\tilde{u}_{\e} = u^S_{\e} + \tilde{u}^{+}_{\e} + \tilde{u}^{-}_{\e} + u^R_{\e},
\end{equation}
where $u^S_{\e}$ and $u^R_{\e}$ denote the smooth part and the remainder 
from Proposition \ref{prop:decomp}
and
\begin{align}
\label{eq:explicitbl}
\tilde{u}^{\pm}_{\e}(x) = C^{\pm} e^{- \sqrt{b} (1\mp x)/\e },
\end{align}
where the constants $C^{\pm}$ are bounded independently of $\e$ 
(see \cite{SS1996} for more details).
These boundary layer functions 
are related to $u^{\pm}_{\e}$ from Proposition \ref{prop:decomp}
through
$u^{+}_{\e} + u^{-}_{\e} = u^{BL}_{\e} = \tilde{u}^{+}_{\e} + \tilde{u}^{-}_{\e}$.
\end{remark}
\section{$hp$-Approximation}
\label{sec:approx}
The approximation to the solution of (\ref{eq:de})--(\ref{eq:bc}) 
by the Finite Element Method (FEM) was studied in
\cite{Melenk1997}, and in \cite{SS1996}.
In this section we summarize the relevant results.
First, we cast \eqref{eq:de} -- \eqref{eq:bc}
into an equivalent weak formulation that reads: 
find $u_{\e} \in H^1_0(I)$ such that for all $v \in H^1_0(I)$
there holds
\begin{equation}
\label{eq:variational}
\int_0^1 \left\{ \e^2 u'_{\e} v' + b u_{\e} v \right\} dx = \int_0^1 f v dx.
\end{equation}
In order to define the discrete version of (\ref{eq:variational}), 
for $N\in\N$,
let $\Delta = \{ x_j \}_{j=0}^{N}$ be an arbitrary
partition of $I$ and set $I_j = (x_{j-1} , x_{j} )$, $h_j = x_{j} - x_{j-1}, j=1, \ldots, N$. 
With $\mathcal{P}_p(I)$ the space of polynomials of degree at most $p$ on $I$, 
we define the spaces
\begin{eqnarray}
\label{eq:Sp}
\mathcal{S}^p(\Delta) &:=& \{ w \in H^1(I) : w|_{I_j} \in \mathcal{P}_p(I_j), j=1,\dots,N \} ,
\\
\mathcal{S}^p_0(\Delta) &:=& \mathcal{S}^p(\Delta) \cap H^1_0(I) \label{eq:Sp0}.
\end{eqnarray}
The discrete version of (\ref{eq:variational}), then reads: find $u^{FEM}_{\e} \in \mathcal{S}_0^p(\Delta)$ such that
for all $v\in \mathcal{S}_0^p(\Delta)$, there holds
\begin{equation}
\label{eq:discrete}
\int_0^1 \left\{ \e^2 \left(u^{FEM}_{\e}\right)' v' + b u^{FEM}_{\e} v \right\} dx = \int_0^1 f v dx.
\end{equation}
Associated with the above problem, we have the so-called \emph{energy norm}:
\begin{equation}
\label{eq:energy}
\Vert w \Vert_{\e}^2 := \int_0^1 \left\{\e^2 (w')^2 + b w^2\right\} dx, \; w \in H^1_0(I),
\end{equation}
and the usual best approximation property holds:
\begin{equation}
\label{eq:best_approx}
\Vert u_{\e} - u^{FEM}_{\e} \Vert_{\e} \leq \Vert u_{\e} -v \Vert_{\e} \; \forall \; v \in \mathcal{S}_0^p(\Delta).
\end{equation}

The following \emph{spectral boundary layer mesh} 
is the minimal one which yields 
\emph{exponential} convergence rates in terms of the number of degrees of freedom (i.e. the 
number of ``Finite-Element features'')
as the polynomial degree $p$ is increased.
\begin{definition}[Spectral Boundary Layer mesh, {\cite[Definitions 13 and 14]{Melenk1997}}]
\label{def:SBL}
For $\kappa>0$, $p\in \N$ and $0<\e \leq 1$, 
the \emph{Spectral Boundary Layer mesh} 
$\Delta_{BL}(\kappa,p)$ is defined as 
$$
\Delta_{BL}(\kappa,p):= 
\begin{cases}
\{-1,-1+\kappa p \varepsilon,1-\kappa p \varepsilon,1\} & \mbox{ if $\kappa p \varepsilon < 1/2$} \\
\{-1,1\} & \mbox{ if $\kappa p \varepsilon \ge 1/2$}.  
\end{cases}
$$
Furthermore, let us define the spaces 
$V^p(\kappa)$ and $V^p_0(\kappa)$ of piecewise polynomials of degree at most $p$
via
\begin{eqnarray*}
V^p(\kappa):= {\mathcal S}^p(\Delta_{BL}(\kappa,p)),
\;
V^p_0(\kappa) := {\mathcal S}^p_0(\Delta_{BL}(\kappa,p)) = V^p(\kappa) \cap H^1_0(I). 
\end{eqnarray*}
\end{definition}
Using the above mesh, the following was shown in \cite{Melenk1997}.
\begin{proposition}[{\cite[Thm.~{16}]{Melenk1997}}]
\label{prop:Melenk1997} 
Assume that (\ref{eq:analyticfb}) holds and 
let $u_{\e}$ be the solution of (\ref{eq:variational}). 
Then, there exists $\kappa_0>0$ (depending only on $b$ and $f$) such that 
for every $\kappa \in (0,\kappa_0)$ and $p\in\N$
there exist positive constants $C$, $\beta$, independent of $\e$ and $p$, such that 
\begin{equation}
\label{eq:Melenk1997} 
\inf_{v \in V^p_0(\kappa)} \|u_{\e} - v\|_{\e} \leq C e^{-\beta p}. 
\end{equation}
\end{proposition}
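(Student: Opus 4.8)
The proposition is purely an approximation statement, so it suffices to exhibit, for each $0<\e\le 1$ and each $p\in\N$, an element $v\in V^p_0(\kappa)$ with $\|u_\e-v\|_\e\le Ce^{-\beta p}$ and $C,\beta>0$ independent of $\e$ and $p$; the relevance of this quantity for the finite element solution is then \eqref{eq:best_approx}. Since $\e\le 1$, one has $\|w\|_\e^2\le \e^2\|w'\|_{L^2(I)}^2+\|b\|_{L^\infty(I)}\|w\|_{L^2(I)}^2$, and the analogous bound on any subinterval, rescaled to a reference element, shows that on the elements of $\Delta_{BL}(\kappa,p)$ the energy norm is controlled by suitably scaled Sobolev norms. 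We distinguish the regimes $\e\ge 1/p$ and $\e<1/p$.

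\emph{Regime $\e\ge 1/p$.} Here we do not use the decomposition \eqref{eq:decomp} but the classical analytic regularity of Theorem~\ref{thm:analregul}, together with the single global element, noting that $\mathcal P_p\cap H^1_0(I)\subseteq V^p_0(\kappa)$ for every $\kappa$. Expanding $u_\e$ in Legendre polynomials, $u_\e=\sum_k a_k L_k$, and estimating $a_k$ by repeated integration by parts gives $|a_k|\lesssim\inf_{0\le n\le k}\big(K\max\{n,\e^{-1}\}/k\big)^n$; choosing $n\sim k/(eK)$, which is admissible as soon as $k\ge \e^{-1}$, yields geometric decay $|a_k|\lesssim e^{-k/(eK)}$ for all $k\ge \e^{-1}$. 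Truncating at degree $p\ge\e^{-1}$ and correcting the resulting (exponentially small) boundary values by a linear function produces $v\in\mathcal P_p\cap H^1_0(I)$ with $\|u_\e-v\|_{H^1(I)}\le Ce^{-\beta_1 p}$, hence $\|u_\e-v\|_\e\le Ce^{-\beta_1 p}$, where $\beta_1\sim 1/(eK)$ is independent of $\e,p$.

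\emph{Regime $\e<1/p$.} Shrinking $\kappa_0\le 1/2$ if necessary we have $\kappa p\e<\kappa<1/2$, so $\Delta_{BL}(\kappa,p)=\{-1,-1+\kappa p\e,1-\kappa p\e,1\}$ consists of one coarse interior element $I_{\mathrm{mid}}$ and two layer elements of width $\kappa p\e$. We approximate the four parts of \eqref{eq:decomp} separately. For the remainder we take the zero approximant: by \eqref{eq:rem}, $\|u^R_\e\|_\e^2\le \e^2\|(u^R_\e)'\|_{L^2(I)}^2+\|b\|_{L^\infty(I)}\|u^R_\e\|_{L^2(I)}^2\le C\e^4 e^{-2K_3/\e}$, and since $\e<1/p$ this is $\le Ce^{-2K_3 p}$. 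For the smooth part we use the $\e$-uniform analytic bounds \eqref{eq:smooth} exactly as in the first regime to obtain a global polynomial $v^S\in\mathcal P_p$ with $\|u^S_\e-v^S\|_{H^1(I)}\le Ce^{-\beta_2 p}$. For the right layer $u^+_\e$ we let $v^+$ equal a degree-$p$ polynomial approximation of $u^+_\e$ on the right layer element $I_N=(1-\kappa p\e,1)$, and (up to a linear interface correction) zero on $I_{\mathrm{mid}}$ and on the left layer element; $v^-$ is built symmetrically. Rescaling $I_N$ to $(0,1)$, the bounds \eqref{eq:BL} turn $u^+_\e$ into an exponential boundary layer of width $\Theta(1/p)$ on the reference interval, and the classical $p$-version resolution estimate for such profiles (e.g. \cite{SS1996,Melenk1997}) gives an $H^1(0,1)$ error $\le Ce^{-\beta_3 p}$, hence, undoing the layer-element scaling, $\|u^+_\e-v^+\|_{\e,I_N}\le Ce^{-\beta_3 p}$. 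On $I_{\mathrm{mid}}$ and on the left layer element, \eqref{eq:BL} gives $|(u^+_\e)^{(n)}(x)|\le C_2 K_2^n\max\{n,\e^{-1}\}^n e^{-\sqrt{\underline{b}}\,\kappa p}$, so there both $u^+_\e$ and its near-zero approximant are $O(e^{-\sqrt{\underline{b}}\,\kappa p})$ in energy norm, and similarly for $u^-_\e$. Setting $v:=v^S+v^++v^-$ and subtracting a final linear correction of endpoint size $O(e^{-\beta_4 p})$ to enforce $v|_{\{\pm1\}}=0$ gives $v\in V^p_0(\kappa)$.

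Collecting the contributions, $\|u_\e-v\|_\e\le C\big(e^{-\beta_2 p}+e^{-\beta_3 p}+e^{-\sqrt{\underline{b}}\,\kappa p}+e^{-2K_3 p}+e^{-\beta_4 p}\big)\le Ce^{-\beta p}$ with $\beta>0$, and $\kappa_0$ only needs $\kappa_0\le 1/2$ together with smallness enough for the layer-element estimate, which depends only on $\underline{b}$ and on the constants in \eqref{eq:BL}, i.e. only on $b$ and $f$. The crux is precisely this layer-element estimate: one must show that degree-$p$ polynomials on an element of width $\kappa p\e$ resolve the layer $e^{-\sqrt{\underline{b}}(1\mp x)/\e}$ up to $e^{-\beta_3 p}$ uniformly in $\e$, which is where the factor $p$ in the element width and the smallness of $\kappa$ are essential; the remaining steps are routine scaling and polynomial approximation.
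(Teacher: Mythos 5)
Your sketch reconstructs the approach of \cite[Thm.~16]{Melenk1997}, which is precisely the source the paper cites without reproducing the argument: a two--regime split, a direct global polynomial approximation when $\e$ is large relative to $p$, and the decomposition of Proposition~\ref{prop:decomp} on the three--element mesh otherwise, with the layer--element resolution estimate (going back to \cite{SS1996}) as the crux. The paper's only addition is the remark afterwards that the competitor $v$ can actually be taken to be the elementwise Gau\ss--Lobatto interpolant; your Legendre--expansion bookkeeping gives the same rates, so that is an inessential difference of technique.

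There is, however, one point that needs fixing: your regime split at $\e \ge 1/p$ does not coincide with the mesh's regime split at $\kappa p\e \ge 1/2$, and in the gap between them (roughly $1/p \le \e \lesssim K/p$) your ``asymptotic'' argument breaks down. The Legendre tail only starts to decay once the index $k$ exceeds roughly $eK/\e$, because the factorial gain $(n/k)^n$ must dominate $K^n\max\{n,\e^{-1}\}^n$; that requires $n\sim k/(eK)$ to already be $\ge \e^{-1}$, i.e. $k \gtrsim K/\e$, not merely $k\ge\e^{-1}$. Hence truncating at degree $p$ only gives $O(e^{-\beta p})$ once $p \gtrsim K/\e$, which is stronger than your $p\ge 1/\e$ when $K>1$. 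The standard fix, and what Melenk does, is to split along $\kappa p\e$ versus $1/2$ (this is what decides whether the mesh has layer elements) and to choose $\kappa_0$ small enough that $\kappa p\e \ge 1/2$ forces $p\e \ge 1/(2\kappa_0)$ to exceed the constant (depending on $K$, hence on $b,f$) required by the one--element argument; in the complementary mesh--with--layers regime your decomposition argument applies verbatim. With that adjustment the sketch is sound.
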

For the ensuing deep NN approximation constructions, 
it is important to note that the proof of the above result is constructive, 
in that $v \in V^p_0(\kappa)$ can be
taken to be the element-wise 
Gau\ss-Lobatto interpolant of $u_{\e}$. 
Hence, 
knowledge of the values of $u_{\e}$ in the Gau\ss-Lobatto points in 
each (sub)interval of $\Delta_{BL}(\kappa, p)$ 
is the only required information for constructing $v$.

It is well known (see, e.g. \cite{MX2016} and the references therein) 
that the energy norm $\| \circ \|_\e$ defined in (\ref{eq:energy})
is deficient in the sense that it does 
not ``see the layers'':
as $\e \rightarrow 0$, it holds that
\[
\Vert u^S_{\e} \Vert_{\e} = O(1) \: \text{ while } \Vert u^{\pm}_{\e} \Vert_{\e} = O(\e^{1/2}).
\]
A correctly \emph{balanced} norm 
should yield $\Vert u^S_{\e} \Vert_{B} = O(1)=\Vert u^{\pm}_{\e} \Vert_{B}$.
The so-called \emph{balanced norm}
$\| \circ \|_B$ defined in the following expression 
is such a norm:
\begin{equation}\label{eq:balanced_norm}
\Vert w \Vert_{B}^2 := \e \Vert w' \Vert^2_{L^2(I)} +  \Vert w \Vert^2_{L^2(I)}.
\end{equation}
Unfortunately, 
the bilinear form associated with the weak formulation (\ref{eq:variational}) 
is \emph{not} coercive with respect to this norm, 
and standard numerical analysis techniques fail in proving exponential convergence 
with respect to this norm. In \cite{MX2016} this was by-passed through an alternative analysis 
(see \cite{MX2016} for details) and the following was shown.

\begin{proposition}[{\cite[Thm.~{2.6} and Cor.~{2.7}]{MX2016}}]
\label{prop:balanced} 
Assume that (\ref{eq:analyticfb}) holds and 
let $u_{\e}$ be the solution of (\ref{eq:variational}).

Then, 
there exists $\tilde{\kappa}_0>0$ (depending only on $b$ and $f$) 
such that for every 
$\kappa \in (0,\tilde{\kappa}_0)$ 
and 
every $p\in\N$,
the following holds.

Denoting by $u^{FEM}_{\e} \in V^p_0(\kappa)$ 
the Galerkin Finite-Element solution of (\ref{eq:discrete}),
there exist positive constants  
$C$, $\beta$ that are independent of $\e$ and $p$, 
such that
\begin{align}
\label{eq:balanced}
\left\{ \e^{1/2} \|u_{\e}' - (u^{FEM}_{\e})'\|_{L^2(I)} 
	+ \|u_{\e} - u^{FEM}_{\e}\|_{L^2(I)} \right\} 
\leq &\, C e^{-\beta p},
\\
\|u_{\e} - u^{FEM}_{\e}\|_{L^\infty(I)}
\leq 
&\, C e^{-\beta p}.
\label{eq:linfty}
\end{align}
\end{proposition}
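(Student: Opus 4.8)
The plan is to reduce the balanced-norm estimate to the already available energy-norm convergence of Proposition~\ref{prop:Melenk1997} together with the fine decomposition of Proposition~\ref{prop:decomp} and the sharpened boundary-layer bound \eqref{eq:blnorms} used in Lemma~\ref{lem:normsue}. First I would fix $\kappa \in (0,\tilde\kappa_0)$ (with $\tilde\kappa_0 \le \kappa_0$ to be chosen) and write $u_\e - u^{FEM}_\e = (u_\e - v) + (v - u^{FEM}_\e)$ for a cleverly chosen quasi-interpolant $v \in V^p_0(\kappa)$, e.g. the element-wise Gau\ss{}--Lobatto interpolant of $u_\e$ on $\Delta_{BL}(\kappa,p)$ as in the remark following Proposition~\ref{prop:Melenk1997}. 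The term $u_\e - v$ is controlled directly in the balanced norm by an explicit interpolation-error analysis on each subinterval, splitting $u_\e$ into $u^S_\e$, $u^\pm_\e$, $u^R_\e$: on the smooth part one uses \eqref{eq:smooth} and classical $hp$ interpolation estimates; on the layer parts one exploits the exponential decay in \eqref{eq:blnorms}/\eqref{eq:BL} together with the fact that the layer mesh resolves the boundary region of width $\kappa p \e$, so that the factors $\e^{\pm 1/2}$ in $\|\cdot\|_B$ are compensated by the $\e$-scaling of the layer; on the remainder $u^R_\e$ one simply uses \eqref{eq:rem}, which is exponentially small in $\e^{-1}$ uniformly.

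The second term $v - u^{FEM}_\e$ lies in the finite-dimensional space $V^p_0(\kappa)$, so here one can afford an inverse-type argument. Concretely, $v - u^{FEM}_\e$ is a piecewise polynomial of degree $p$ on a mesh whose smallest element has length $\gtrsim \e$ (when $\kappa p \e < 1/2$) or length $\gtrsim 1$ otherwise; hence a polynomial inverse inequality gives $\| (v - u^{FEM}_\e)' \|_{L^2} \lesssim (p^2/(\kappa p \e)) \| v - u^{FEM}_\e \|_{L^2} \lesssim (p/\e)\| v - u^{FEM}_\e \|_{L^2}$, so that $\|v - u^{FEM}_\e\|_B \lesssim \e^{1/2} (p/\e)\|v-u^{FEM}_\e\|_{L^2} + \|v-u^{FEM}_\e\|_{L^2} \lesssim \e^{-1/2} \|v - u^{FEM}_\e\|_{\e}$ up to polynomial-in-$p$ factors; but $\|v-u^{FEM}_\e\|_{L^2}\le \underline b^{-1/2}\|v-u^{FEM}_\e\|_\e$. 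Then the energy-norm triangle inequality and Proposition~\ref{prop:Melenk1997}'s C\'ea estimate bound $\|v - u^{FEM}_\e\|_\e \le \|v - u_\e\|_\e + \|u_\e - u^{FEM}_\e\|_\e \le 2\|v-u_\e\|_\e \lesssim e^{-\beta' p}$. The key point is that the quasi-interpolant $v$ must additionally satisfy the sharper bound $\|u_\e - v\|_\e \lesssim \e^{1/2} e^{-\beta' p}$ (not merely $e^{-\beta' p}$), which is true for the Gau\ss{}--Lobatto interpolant because $\|u_\e - v\|_\e$ inherits the $\e^{1/2}$-scaling of the layer contributions; combined with the inverse inequality this absorbs the $\e^{-1/2}$ loss, the polynomial factors in $p$ get absorbed into a slightly smaller $\beta$, and one obtains $\|v - u^{FEM}_\e\|_B \lesssim e^{-\beta p}$.

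Adding the two contributions yields \eqref{eq:balanced}, and \eqref{eq:linfty} then follows from \eqref{eq:balanced} via the interpolation inequality $\|w\|_{L^\infty(I)} \le C \|w\|_{L^2(I)}^{1/2}\|w\|_{H^1(I)}^{1/2}$ (the footnoted inequality in Lemma~\ref{lem:normsue}): indeed $\|w\|_{L^\infty}^2 \lesssim \|w\|_{L^2}\|w'\|_{L^2} + \|w\|_{L^2}^2 \lesssim \e^{-1/2}\|w\|_B^2 \cdot \e^{1/2} + \ldots$, and a more careful bookkeeping using $\e \le 1$ shows the right-hand side is again $\lesssim e^{-\beta p}$ after shrinking $\beta$; alternatively one cites \cite[Cor.~2.7]{MX2016} directly. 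The main obstacle, and the only genuinely delicate point, is the interplay in the second step between the inverse inequality (which costs a factor $\e^{-1/2}$ and powers of $p$) and the gain of $\e^{1/2}$ in the energy-norm interpolation error of the layer parts: one has to verify that the chosen interpolant realizes this gain with constants independent of $\e$ and $p$, and that the residual polynomial-in-$p$ factors can be hidden in the exponential rate by reducing $\beta$. This is precisely where the non-coercivity of the bilinear form in $\|\cdot\|_B$ is circumvented — not by a Galerkin-orthogonality/C\'ea argument in $\|\cdot\|_B$, but by the inverse-inequality detour through the energy norm, exactly as carried out in \cite{MX2016}.
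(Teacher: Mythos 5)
The paper does not supply an independent proof of Proposition~\ref{prop:balanced}: as stated in the surrounding text, the result is cited verbatim from \cite{MX2016} (Thm.~2.6 and Cor.~2.7), so there is no in-paper argument to compare your sketch against.

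Your blind sketch follows the right general spirit but has a genuine gap at the pivotal step. You assert that a well-chosen interpolant $v\in V^p_0(\kappa)$ satisfies $\norm[\e]{u_\e - v}\lesssim \e^{1/2}e^{-\beta p}$ and then use a global inverse inequality to absorb the resulting $\e^{-1/2}$ loss. That estimate is false. By \eqref{eq:smooth}, the smooth component $u^S_\e$ is $O(1)$ uniformly in $\e$, and its interpolation error on the large interior element is of size $e^{-\beta p}$ in $L^2(I)$; since the energy norm \eqref{eq:energy} is dominated there by its zero-order term, one obtains only $\norm[\e]{u^S_\e - I u^S_\e}\approx e^{-\beta p}$, with no extra factor $\e^{1/2}$. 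The boundary-layer components do carry the $\e^{1/2}$ gain (via \eqref{eq:blnorms}), but they do not dominate the sum, so the global bound $\norm[\e]{u_\e - v}\lesssim\e^{1/2}e^{-\beta p}$ cannot hold. Consequently your inverse-inequality detour only yields $\norm[B]{v-u^{FEM}_\e}\lesssim\e^{-1/2}e^{-\beta p}$, which is not uniform in $\e$. Passing to an element-wise inverse inequality does not rescue this either: the Galerkin/C\'ea step gives a global energy-norm bound on $v-u^{FEM}_\e$, not an $\e^{1/2}$-small local $L^2$ bound on the two layer elements where the inverse inequality costs $p/(\kappa\e)$. The actual argument in \cite{MX2016} is necessarily finer: it separates the smooth and layer contributions in the \emph{discrete} error as well (not merely in the interpolant), so that the $\e^{-1}$-weighted inverse inequality is applied only to a piece that is genuinely $O(\e^{1/2})$ on the layer mesh, while the non-small smooth contribution lives on the large element, where the inverse inequality costs only algebraic factors of $p$.

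Your closing $L^\infty$ step carries the same flaw: combining $\|w\|_{L^\infty(I)}\lesssim\|w\|_{L^2(I)}^{1/2}\|w\|_{H^1(I)}^{1/2}$ with $\|w'\|_{L^2(I)}\lesssim\e^{-1/2}\norm[B]{w}$ yields $\|w\|_{L^\infty(I)}\lesssim\e^{-1/4}e^{-\beta p}$ for $w=u_\e-u^{FEM}_\e$, which is again not uniform in $\e$. This is precisely why \eqref{eq:linfty} is proved in \cite{MX2016} as a separate corollary rather than as an interpolation consequence of \eqref{eq:balanced}.
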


The proof of the above proposition is again constructive, 
as it is based on the proof of Proposition \ref{prop:Melenk1997}
(see \cite{MX2016}).

Finally, 
for the approximation of the explicit boundary layer expressions from Remark \ref{rem:blexp}
we recall the following result from \cite{SS1996}.
We state our result for the boundary layer function
$\tilde{u}^-_{\e}(x) = \exp((1+x)/e)$ for $x\in(-1,1)$,
corresponding to the left boundary point,
and corresponding to $b=1$ in Remark \ref{rem:blexp}.
\begin{proposition}[{{\cite[Thm. 5.1, Cor. 5.1]{SS1996}, 
\cite[Thm. 3.74, Cor. 3.77]{Schwab1998}}}]
\label{prop:hpexp}
For $\e\in(0,1]$ and $p\in\N$, 
let the mesh $\Delta$ 
be as follows:
\begin{equation}
\label{eq:blmeshdegree}
\Delta = 
\begin{cases}
	\{ -1, -1+\kappa\tilde{p}\e, 1 \},
	& \text{ if } \kappa\tilde{p}\e <2, \\
	\{ -1, 1 \},
	& \text{ if } \kappa\tilde{p}\e \geq2,
\end{cases}
\end{equation}
for $\tilde{p} := p+\tfrac12$ and 
constants $0<\kappa_1$ and $\kappa_1\leq \kappa<4/e =: \kappa_0$ 
      which are independent of $p$ and $\e$.

Then, 
with $\tilde{u}^-_{\e}(x) = \exp((1+x)/e)$ for $x\in(-1,1)$ 
as defined in Remark \ref{rem:blexp} with $b=1$,
there exists $v\in \mathcal{S}^p(\Delta)$ with $v(\pm1) = \tilde{u}^-_{\e}(\pm1)$ 
and 
\begin{equation}
\label{eq:blrate}
\e^{1/2} \normc[L^2(I)]{ (\tilde{u}^-_{\e})' -v'}
	+ \e^{-1/2} \normc[L^2(I)]{ \tilde{u}^-_{\e} -v} 
	+ \normc[L^\infty(I)]{\tilde{u}^-_{\e}- v} 
	\leq C \exp(-\beta p)
	,
\end{equation}
for constants $C,\beta>0$ 
independent of $p$ and $\e$.

\begin{remark}
\label{rem:factorialconv}
In \cite{SS1996,Schwab1998},
for the case $\kappa\tilde{p}\e < 2$,
it is shown to be sufficient to use polynomial degree $1$ 
in the element $(-1+\kappa\tilde{p}\e,1)$.
Also, $\beta$ is specified explicitly.
For the case $\kappa\tilde{p}\e \geq 2$,
it is shown that the error converges faster than exponentially.
There exists $C>0$ such that the error bounds also hold if we replace
$C \exp(-\beta p)$ by $C \exp(- \tilde{p} \log(2\tilde{p}\e/e) )$.
\end{remark}
\end{proposition}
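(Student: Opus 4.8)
The plan is to obtain \eqref{eq:blrate} by reducing the two-element, variable-degree mesh \eqref{eq:blmeshdegree} to the reference configuration treated in \cite[Thm.~5.1, Cor.~5.1]{SS1996} and \cite[Thm.~3.74, Cor.~3.77]{Schwab1998} via an affine change of variables, then transporting the approximant and the error bound back to $I$. First I would pass from $I=(-1,1)$ to $(0,2)$ by $t=1+x$; under this map the left boundary layer $\tilde u^-_\e(x)=e^{-(1+x)/\e}$ from Remark~\ref{rem:blexp} (with $b=1$) becomes the canonical layer function $w(t):=e^{-t/\e}$ at $t=0$, and the mesh \eqref{eq:blmeshdegree} becomes $\{0,\kappa\tilde p\e,2\}$ when $\kappa\tilde p\e<2$, i.e.\ one ``needle'' element $(0,\kappa\tilde p\e)$ of width $\asymp p\e$ abutting the layer together with one $O(1)$ element $(\kappa\tilde p\e,2)$, respectively the single element $(0,2)$ when $\kappa\tilde p\e\ge 2$. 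This is exactly the geometric two-element layer mesh for which the cited results are formulated; the admissible grading window $\kappa_1\le\kappa<\kappa_0=4/e$ (Euler's number, not $\e$) is precisely the range guaranteeing robust exponential convergence there: on the needle element scaled to the reference interval $(-1,1)$, $w$ becomes an entire function of exponential type $\asymp\kappa p$, whose best degree-$p$ polynomial approximant converges at rate $\exp(p\log(\kappa e/4))=e^{-\beta p}$ with $\beta=\beta(\kappa)>0$ iff $\kappa<4/e$, while the lower bound $\kappa\ge\kappa_1$ only keeps the needle width comparable to $p\e$, which is what makes the constants $\e$-uniform.

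Next I would import the approximant. The cited theorems furnish a piecewise polynomial $\hat v$ of degree $\le p$ on the transformed mesh with $\hat v(0)=w(0)$, $\hat v(2)=w(2)$ and
\[
\e^{1/2}\|w'-\hat v'\|_{L^2(0,2)}+\e^{-1/2}\|w-\hat v\|_{L^2(0,2)}+\|w-\hat v\|_{L^\infty(0,2)}\le C\,e^{-\beta p},
\]
with $C,\beta>0$ independent of $p$ and $\e$: on the needle element this is the entire-function estimate just described, realized concretely by a Gau\ss--Lobatto or \Cheb interpolant (cf.\ the constructive nature of the proof noted after Proposition~\ref{prop:Melenk1997}), while on the $O(1)$ element both $w$ and $w'$ are already pointwise of size $O(e^{-\kappa p})$, so any low-degree --- in fact affine --- approximant matching the nodal value at $t=2$ inherits the same exponential smallness; this is the sharpening recorded in Remark~\ref{rem:factorialconv}. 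Setting $v(x):=\hat v(1+x)$ gives $v\in\mathcal S^p(\Delta)$ with $v(\pm1)=\tilde u^-_\e(\pm1)$. Because the three quantities entering \eqref{eq:blrate} are invariant up to fixed ($\e$- and $p$-independent) constants under $t=1+x$ --- the $L^\infty$ norm unchanged, the two $L^2$ norms picking up only a bounded Jacobian factor --- the displayed bound transforms into \eqref{eq:blrate}; the weighting $\e^{1/2}\|\cdot'\|_{L^2}+\e^{-1/2}\|\cdot\|_{L^2}$ is the layer-adapted scaling consistent with \eqref{eq:blnorms}, hence strictly stronger than the balanced norm \eqref{eq:balanced_norm}.

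I expect no genuinely new analytic content here; the entire effort is bookkeeping in the translation between conventions. The points needing care are: (i) the reindexing $\tilde p=p+\tfrac12$ in \eqref{eq:blmeshdegree} versus the plain polynomial degree in the cited statements, harmless since $\beta\tilde p$ and $\beta p$ differ by a bounded factor absorbable into $C,\beta$; (ii) the differing normalizations of the reference interval and of the powers of $\e$ in the weighted norms between \cite{SS1996} and \cite{Schwab1998}, which only rescale constants; and (iii) translating the threshold $\kappa\tilde p\e<2$ used here into the ``$<1/2$''-type thresholds appearing elsewhere (Definition~\ref{def:SBL}), again a fixed rescaling of $\kappa$. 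The super-exponential tail asserted in Remark~\ref{rem:factorialconv}, i.e.\ the replacement of $Ce^{-\beta p}$ by $C\exp(-\tilde p\log(2\tilde p\e/e))$ when $\kappa\tilde p\e\ge2$, then follows by reading off the sharp form of the cited corollaries on the single element $(0,2)$, where $w=e^{-t/\e}$ has exponential type $1/\e$ and its best degree-$\tilde p$ polynomial approximant on $(0,2)$ converges at precisely that rate (and $\log(2\tilde p\e/e)>0$ in this regime since $2\tilde p\e\ge4/\kappa>e$). I do not anticipate any step to be a genuine obstacle.
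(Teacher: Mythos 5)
The paper presents Proposition~\ref{prop:hpexp} as a bare citation of \cite[Thm.~5.1, Cor.~5.1]{SS1996} and \cite[Thm.~3.74, Cor.~3.77]{Schwab1998} with no proof of its own, and your proposal invokes exactly those sources after the natural translation of variables and tracking of $\e$-weighted norms, so you are following the same path the paper delegates to the references. The bookkeeping you highlight (translation invariance of the three norms under $t=1+x$, the factor relating $p$ and $\tilde p$, and the role of $\kappa<4/e$ with $e$ denoting Euler's number) is just what is needed to match conventions; note also that the formula $\tilde{u}^-_{\e}(x)=\exp((1+x)/e)$ in the proposition statement is a typo for $\exp(-(1+x)/\e)$, which you silently and correctly fixed.
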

\section{Neural Network Definitions}
\label{sec:nn}
As usual (e.g. \cite{PV2018,OPS2020,OS2023}), 
we define a neural network (NN) in terms of its weight matrices and bias vectors.
We distinguish between a neural network and the function it realizes,
called \emph{realization} of the NN,
which is the composition of parameter-dependent affine transformations
and nonlinear activations.
We recall some NN formalism in the notation of
\cite[Section 2]{PV2018}.

\begin{definition}[{\cite[Definition 2.1]{PV2018}}]
\label{def:NeuralNetworks}
For $d,L\in\N$, a \emph{neural network $\Phi$} 
with input dimension $d \geq 1$ and number of layers $L\geq 1$, 
comprises 
a finite sequence of matrix-vector tuples, i.e.
\begin{align*}
\Phi = ((A_1,b_1),(A_2,b_2),\ldots,(A_L,b_L)).
\end{align*}

For $N_0 := d$ and \emph{numbers of neurons $N_1,\ldots,N_L\in\N$ per layer}, 
for all $\ell=1,\ldots, L$ it holds that
$A_\ell\in\R^{N_\ell \times N_{\ell-1} }$ and
$b_\ell\in\R^{N_\ell}$.

For a NN $\Phi$ and an activation function $\varrho: \R \to \R$, 
we define the associated
\emph{realization} of $\Phi$ as the function
\begin{align*}
\realiz(\Phi): \R^d\to\R^{N_L} : x \to x_L,
\end{align*}
where
\begin{align*}
x_0 & := x,
\\
x_\ell & := \varrho( A_\ell x_{\ell-1} + b_\ell ),
\qquad\text{ for }\ell=1,\ldots,L-1,
\\
x_L & := A_L x_{L-1} + b_L.
\end{align*}
Here $\varrho$ acts componentwise on vector-valued inputs, 
$\varrho(y) = (\varrho(y_1), \dots, \varrho(y_m))$ 
for all $y = (y_1, \dots, y_m) \in \R^m$.
We call the layers indexed by $\ell=1,\ldots,L-1$ \emph{hidden layers}, 
in those layers the activation function is applied.
No activation is applied in the last layer of the NN.

We refer to $\depth(\Phi) := L$ as the \emph{depth} of $\Phi$
and call $\size(\Phi) := \sum_{\ell=1}^L \norm[0]{A_\ell} + \norm[0]{b_\ell}$
the \emph{size} of $\Phi$,
which is the number of nonzero components 
in the weight matrices $A_\ell$ and the bias vectors $b_\ell$.
Furthermore, we call $d$ and $N_L$ the
\emph{input dimension} and the \emph{output dimension}.
\end{definition}

Some related works,
e.g. \cite{DLM2021}, 
use the width as a measure for the complexity of a NN,
which is defined as $\max_{\ell=0}^L N_\ell$.
Note that in each layer of a fully connected NN
the number of nonzero weights can be as large as the width squared.

We will refer to NNs with
only activation function $\varrho$ as strict $\varrho$-NNs,
or simply as $\varrho$-NNs.
This includes NNs of depth $1$, 
which do not have hidden layers and 
which exactly realize affine transformations.
\section{ReLU Neural Network Approximations}
\label{sec:relunn}
In this section, 
we consider 
the approximation of univariate functions on bounded intervals
by neural networks 
with
the ReLU activation function
$
	\rho: \R \to \R: x \mapsto \max\{0, x\}
$.
In Proposition \ref{prop:relupwpolynom} below,
we recall the ReLU NN approximation of continuous, piecewise polynomial functions
from \cite[Proposition 3.11]{OS2023}.\footnote{
The result in \cite[Proposition 3.11]{OS2023}
is stated for different polynomial degrees $p_1,\ldots,p_N \in\N$ 
in the elements $I_1,\ldots,I_N$ of the partition.
Here, we only state that result for the special case 
that $p_1 = \cdots = p_N = p \in\N$.}
It allows us to transfer the finite element approximation
results from Section \ref{sec:approx}
and obtain approximation rate bounds for ReLU NNs
in Propositions \ref{prop:relubalanced} and \ref{prop:reluexp}.
\begin{proposition}[{\cite[Proposition 3.11]{OS2023}}]
\label{prop:relupwpolynom}
For $-\infty < a < b < \infty$, let $I := (a,b)$.
For all $N\in\N$, all $p\in\N$,
all partitions 
$\Delta = \{ x_j \}_{j=0}^N$ of $I$ into $N$ open, disjoint, connected subintervals 
$I_j = (x_{j-1},x_j)$ of length $h_j = x_j - x_{j-1}$, $j=1,\ldots,N$,
$h = \max_{j=1}^N h_j$,
and for all $v\in \mathcal{S}^p(\Delta)$,
\footnote{The definition of $\mathcal{S}^p(\Delta)$ in \eqref{eq:Sp}
also applies to general intervals $I = (a,b)$ instead of $I = (-1,1)$.}
for all relative tolerances $\tau \in (0,1)$ 
there exists a ReLU NN $\Phi^{v,\Delta,p}_{\tau}$ 
such that for all $1\leq r, r' \leq \infty$ there holds
\begin{align}
\label{eq:relupwpolynom1}
(2/h_i)^{1-t} \snormc[W^{t,r}(I_i)]{v-\realiz\left(\Phi^{v,\Delta,p}_{\tau}\right)}
	\leq &\, 
		\tau \tfrac12 (2/h_i)^{1+1/r'-1/r} \min_{u\in\mathcal{P}_{p}: \atop u'' = v''|_{I_i}} \normc[L^{r'}(I_i)]{u},
	\\\nonumber
	&\, \qquad \text{ for all } i=1,\ldots,N \text{ and } t = 0,1,
\end{align}
\begin{align}
	\label{eq:relupwpolynom2}
\tfrac1h \normc[L^{r}(I)]{v-\realiz\left(\Phi^{v,\Delta,p}_{\tau}\right)}
	\leq &\,
	\snormc[W^{1,r}(I)]{v-\realiz\left(\Phi^{v,\Delta,p}_{\tau}\right)}
	\leq 
		\tfrac12 \tau \snormc[W^{1,r}(I)]{v}	
	,
\end{align}
\begin{align*}
\depth\left(\Phi^{v,\Delta,p}_{\tau}\right)
	\leq &\, C (1+\log_2(p)) \log_2(1/\tau)
		+ C (1+\log_2(p))^3 
	,
	\\
\size\left(\Phi^{v,\Delta,p}_{\tau}\right)
	\leq &\, C N p \big( 1 + \log_2(1/\tau) + \log_2(p) \big)
	,
\end{align*}
for a constant $C>0$ 
which is independent of $I$, $N$, $p$, $\Delta$, $\tau$ and $v$.

In addition, it holds that 
$\realiz\left( \Phi^{v,\Delta,p}_{\tau} \right)(x_j)=v(x_j)$ 
for all $j\in\{0,\ldots,N\}$.
The weights and biases in the hidden layers are independent of $v$.
The weights and biases in the output layer 
are linear combinations of the function values of $v$ in the Clenshaw--Curtis points 
in $I_i$ for $i=1,\ldots,N$.
\end{proposition}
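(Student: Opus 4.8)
The statement is quoted from \cite[Proposition 3.11]{OS2023}; here is how I would prove it. The plan is to treat each element $I_i$ separately and then to glue the pieces into one network. On $I_i$ one pulls back to the reference interval $\hat I=(-1,1)$ by the affine map of Jacobian $h_i/2$; the scaling factors $(2/h_i)^{1-t}$ and $(2/h_i)^{1+1/r'-1/r}$ in \eqref{eq:relupwpolynom1} are precisely the constants this pull-back produces in the $W^{t,r}(I_i)$ seminorms and the $L^{r'}(I_i)$ norm. On $\hat I$ one writes the degree-$p$ piece in its Chebyshev expansion $\sum_{k=0}^{p}\hat v_{i,k}T_k$, where $\hat v_{i,k}$ is obtained by a discrete cosine transform from the values of $v$ at the Clenshaw--Curtis points of $I_i$ (which include the endpoints); this is the origin of the assertion that the output weights depend on $v$ only through those nodal values, and it supplies the stability bounds relating $\sum_{k}|\hat v_{i,k}|$ to the $L^{r'}$-norms of the polynomial piece, up to powers of $p$, that are used below.

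The core step is to build \emph{one} ReLU subnetwork, with hidden weights independent of $v$, that approximates the vector $(T_1,\dots,T_p)$ on $\hat I$ to accuracy $\delta$ in $W^{1,\infty}(\hat I)$ \emph{and is exact at $x=\pm1$}. The building block is the standard ReLU emulation of $y\mapsto y^2$ — a nodal interpolant of the square on a dyadic grid, hence accurate in $W^{1,\infty}$ and exact at $0$ and $\pm1$ — and, by polarization, of the bilinear product, each of depth and size $O(\log(1/\delta))$. These blocks are wired together via the Chebyshev identities $2T_k^2-1=T_{2k}$, $2T_kT_{k+1}-T_1=T_{2k+1}$ and $T_m\circ T_n=T_{mn}$, organized as an $O(\log p)$-level binary-doubling recursion returning $T_1,\dots,T_p$ in parallel. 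Depth-synchronizing the parallel branches and bootstrapping the per-level accuracy to $\delta\asymp\tau p^{-c}$, for a fixed exponent $c>0$, so that the $W^{1,\infty}$-error survives the recursion without spurious powers of $p$, is what produces the $(1+\log_2 p)\log_2(1/\tau)$ and $(1+\log_2 p)^3$ terms in the depth and the $p(1+\log_2(1/\tau)+\log_2 p)$ term in the size; exactness at $\pm1$ is inherited at every stage because the square-emulation is exact at $\pm1$ and $(\pm1)^2=1$.

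For the global network one writes $v=v_{\mathrm{lin}}+\sum_{i=1}^N w_i$, where $v_{\mathrm{lin}}$ is the continuous piecewise-linear nodal interpolant of $v$ — realized \emph{exactly} by a ReLU network of size $O(N)$ assembled from hat functions — and $w_i=\sum_{k\ge2}\hat v_{i,k}(T_k-\ell_k)$, pulled back to $I_i$, is the curvature part, with $\ell_k$ the linear interpolant of $T_k$ at $\pm1$, so that $w_i$ vanishes at the endpoints of $I_i$ and $w_i''=v''|_{I_i}$. Each $w_i$ is emulated by post-composing the Chebyshev subnetwork, localized to $I_i$ by one further product with the appropriate hat function, with the affine read-out whose weights are the $\hat v_{i,k}$; since the emulation of $T_k$ is exact at the endpoints, the emulation of $w_i$ vanishes there, hence summing in parallel preserves continuity and yields $\realiz(\Phi^{v,\Delta,p}_{\tau})(x_j)=v(x_j)$ at every node. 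Only the curvature is affected by the $\delta$-error, so with $\delta\asymp\tau p^{-c}$, the Chebyshev stability bounds, and Markov's inequality $\normc[L^\infty(\hat I)]{T_k'}\le k^2$ for the derivative part, the emulation error is bounded by the right-hand sides of \eqref{eq:relupwpolynom1}--\eqref{eq:relupwpolynom2} \emph{simultaneously for all} $1\le r,r'\le\infty$ after undoing the rescaling; the extra inequality $\tfrac1h\normc[L^r(I)]{\cdot}\le\snormc[W^{1,r}(I)]{\cdot}$ in \eqref{eq:relupwpolynom2} is the elementwise Poincar\'e inequality made available precisely by the nodal exactness.

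The delicate point — and the step I expect to be the real obstacle — is obtaining the $L^r$- and $W^{1,r}$-error bounds against the single \emph{second-derivative} quantity on the right (rather than against $\| v\|$), \emph{for all} $1\le r,r'\le\infty$ at once, while keeping the $\tau$-dependence of the depth logarithmic. This forces one (i) to emulate the square, and hence the whole Chebyshev family, accurately in $W^{1,\infty}$ and not merely in $L^\infty$, so that differentiation does not destroy the estimate, and (ii) to split off and realize the affine endpoint-interpolant part of every polynomial piece \emph{exactly}, so that no constant or linear error component survives; dropping (ii) would force $\|v\|$ onto the right-hand side in place of its curvature, and dropping (i) would cost uncontrolled powers of $p$ in the $W^{1,r}$ bounds.
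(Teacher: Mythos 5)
The paper does not prove Proposition~\ref{prop:relupwpolynom}: it is imported verbatim from \cite[Proposition 3.11]{OS2023} and only cited (see the surrounding text, ``we recall the ReLU NN approximation \ldots from \cite[Proposition 3.11]{OS2023}''). So there is no internal proof in this paper to compare against, only the argument in the cited reference. That said, your reconstruction captures most of the ingredients one would actually find in \cite{OS2023} and its predecessor \cite{OPS2020}: the Yarotsky-type ReLU emulation of $y\mapsto y^2$, exact at dyadic points and hence at $0$ and $\pm1$; polarization for products; the $O(\log p)$-depth binary-doubling tree through the \Cheb recursion; the exact ReLU realization of the continuous piecewise-linear nodal interpolant; and the decomposition of each elementwise polynomial into its linear endpoint interpolant plus a curvature remainder, so that the error couples only to $\min_{u:\,u''=v''|_{I_i}}\normc[L^{r'}(I_i)]{u}$ rather than to $\|v\|$.

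The one genuine weakness is your localization step, ``localized to $I_i$ by one further product with the appropriate hat function.'' Two problems. First, outside $I_i$ the affinely transformed coordinate leaves $[-1,1]$, so the emulated \Cheb polynomials grow without control, and multiplying their output by an approximately emulated bump function neither controls the error near the element endpoints nor preserves the nodal exactness $\realiz(\Phi^{v,\Delta,p}_{\tau})(x_j)=v(x_j)$ that the proposition asserts, since the product emulation is only approximate. Second, it inserts $N$ extra product subnetworks of depth and size $O(\log(1/\tau))$, which breaks the stated size budget $O(Np(1+\log_2(1/\tau)+\log_2 p))$. The mechanism actually used is \emph{input clipping}: the reference coordinate in element $I_i$ is realized by a composition of two ReLUs that saturates at $\pm1$ outside $I_i$. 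Because each emulated $T_k$ (built from square blocks exact at $\pm1$) then outputs exactly $T_k(\pm1)$ outside $I_i$, the curvature read-out $T_k-\ell_k$ vanishes identically there, yielding support control and nodal exactness with no additional product at all. With that substitution, your remaining ideas — bootstrapping the per-level accuracy $\delta\asymp\tau p^{-c}$ in $W^{1,\infty}$ through the recursion, the Clenshaw--Curtis read-out and its consequence for the dependence of the output-layer weights on $v$, Markov's inequality for the derivative bounds, and the elementwise Poincar\'e inequality for the first inequality in \eqref{eq:relupwpolynom2} — are consistent with the cited argument.
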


As a direct corollary of 
Propositions \ref{prop:balanced} and \ref{prop:relupwpolynom}
we obtain: 
\begin{proposition}
\label{prop:relubalanced}
Assume that (\ref{eq:analyticfb}) holds. 
For $\e\in(0,1]$, let $u_{\e}$ be the solution of (\ref{eq:variational}).

Then, 
there exists $\tilde{\kappa}_0>0$ (depending only on $b$ and $f$) 
such that for every 
$\kappa \in (0,\tilde{\kappa}_0)$
and 
$p\in\N$
there exists a ReLU NN
$\Phi^{FEM,\kappa,p}_{\e}$
such that, 
with positive constants $C$, $\beta$, 
independent of $\e$ and $p$,
it holds that
\begin{align}
\label{eq:relubalanced}
\left\{ \e^{1/2} \|u_{\e}' - \realiz(\Phi^{FEM,\kappa,p}_{\e})'\|_{L^2(I)} 
	+ \|u_{\e} - \realiz(\Phi^{FEM,\kappa,p}_{\e}) \|_{L^2(I)} \right\} 
\leq &\, C e^{-\beta p},
\\
\|u_{\e} - \realiz(\Phi^{FEM,\kappa,p}_{\e}) \|_{L^\infty(I)}
\leq &\, C e^{-\beta p},
\label{eq:relulinfty}
\end{align}
and $\realiz(\Phi^{FEM,\kappa,p}_{\e})(\pm1) = 0$.

For a constant $\tilde{C} = \tilde{C}(\beta)>0$ depending only on $\beta$,
the network depth and size are bounded as follows: 
\begin{align}
\label{eq:relubldepthsize}
\depth\left(\Phi^{FEM,\kappa,p}_{\e}\right)
	\leq &\, \tilde{C} p (1+\log_2(p))
	,
	\qquad
\size\left(\Phi^{FEM,\kappa,p}_{\e}\right)
	\leq
		\tilde{C} p^2
. 
\end{align}
The weights and biases in the hidden layers are independent of $u_{\e}$
and depend only on $\kappa$, $p$, $\e$ and $\beta$.
\end{proposition}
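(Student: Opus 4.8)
The plan is to feed the constructive $hp$-FEM approximant of Proposition~\ref{prop:balanced} into the ReLU-network emulator of continuous, piecewise polynomial functions of Proposition~\ref{prop:relupwpolynom}, and then to keep careful track of the powers of $\e$ that appear.

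First I would fix $\kappa\in(0,\tilde{\kappa}_0)$ and $p\in\N$ and let $u^{FEM}_{\e}\in V^p_0(\kappa)=\mathcal{S}^p_0(\Delta_{BL}(\kappa,p))$ be the Galerkin solution from Proposition~\ref{prop:balanced}. The mesh $\Delta_{BL}(\kappa,p)$ has $N\le 3$ subintervals, and $u^{FEM}_{\e}$ is continuous, piecewise polynomial of degree $\le p$, and vanishes at $\pm1$; hence Proposition~\ref{prop:relupwpolynom} applies with $v=u^{FEM}_{\e}$, partition $\Delta_{BL}(\kappa,p)$, degree $p$ and a tolerance $\tau\in(0,1)$ to be fixed below, and I would set $\Phi^{FEM,\kappa,p}_{\e}:=\Phi^{u^{FEM}_{\e},\Delta_{BL}(\kappa,p),p}_{\tau}$. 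Since $\realiz(\Phi^{FEM,\kappa,p}_{\e})$ interpolates $u^{FEM}_{\e}$ at all mesh points, it vanishes at $\pm1$, which gives the stated boundary condition.

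For the error I would split each of the three quantities in \eqref{eq:relubalanced}--\eqref{eq:relulinfty} by the triangle inequality into $\|u_{\e}-u^{FEM}_{\e}\|+\|u^{FEM}_{\e}-\realiz(\Phi^{FEM,\kappa,p}_{\e})\|$, the first summand being $\le Ce^{-\beta p}$ by \eqref{eq:balanced}--\eqref{eq:linfty}. For the second summand the decisive observation is which norm of $u^{FEM}_{\e}$ enters: combining Lemma~\ref{lem:normsue} with \eqref{eq:balanced}--\eqref{eq:linfty} gives, uniformly in $\e$, $\|u^{FEM}_{\e}\|_{L^2(I)}\le C$, $\|u^{FEM}_{\e}\|_{L^\infty(I)}\le C$ and $\|(u^{FEM}_{\e})'\|_{L^2(I)}\le C\e^{-1/2}$. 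Using \eqref{eq:relupwpolynom1} with $t=0$ and $r=r'\in\{2,\infty\}$, together with the trivial bound $\min_{u\in\mathcal{P}_{p},\,u''=v''|_{I_i}}\|u\|_{L^{r'}(I_i)}\le\|v\|_{L^{r'}(I_i)}$, yields $\|u^{FEM}_{\e}-\realiz(\Phi^{FEM,\kappa,p}_{\e})\|_{L^2(I)}\le\tfrac12\tau\|u^{FEM}_{\e}\|_{L^2(I)}\le C\tau$ and, likewise, $\|u^{FEM}_{\e}-\realiz(\Phi^{FEM,\kappa,p}_{\e})\|_{L^\infty(I)}\le C\tau$; using \eqref{eq:relupwpolynom2} with $r=2$ yields $\e^{1/2}\|(u^{FEM}_{\e})'-\realiz(\Phi^{FEM,\kappa,p}_{\e})'\|_{L^2(I)}\le\tfrac12\tau\,\e^{1/2}\|(u^{FEM}_{\e})'\|_{L^2(I)}\le C\tau$, where the $\e^{-1/2}$ blow-up of $\|(u^{FEM}_{\e})'\|_{L^2(I)}$ is exactly absorbed by the $\e^{1/2}$ weight of the balanced norm. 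Choosing $\tau=e^{-\beta p}$, with the $\beta$ of Proposition~\ref{prop:balanced}, then gives \eqref{eq:relubalanced}--\eqref{eq:relulinfty}.

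The complexity bounds \eqref{eq:relubldepthsize} I would read off by inserting $N\le 3$ and $\tau=e^{-\beta p}$ into the depth and size estimates of Proposition~\ref{prop:relupwpolynom} and using the elementary bounds $(1+\log_2 p)^2=O(p)$ and $\log_2 p\le p$, valid for all $p\in\N$; the resulting $\tilde{C}$ depends only on $\beta$ and on the absolute constant of Proposition~\ref{prop:relupwpolynom}. The claim about the hidden-layer weights is immediate, since in Proposition~\ref{prop:relupwpolynom} those weights are independent of $v$ and depend only on $I=(-1,1)$, the partition $\Delta_{BL}(\kappa,p)$, the degree $p$, and $\tau=e^{-\beta p}$, hence here only on $\kappa$, $p$, $\e$ and $\beta$. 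The only genuinely delicate point — everything else is bookkeeping — is the $\e$-accounting in the error estimate: one must measure the $L^2$- and $L^\infty$-emulation errors against $\|u^{FEM}_{\e}\|_{L^2(I)}$ and $\|u^{FEM}_{\e}\|_{L^\infty(I)}$, which are $O(1)$, rather than against $\|(u^{FEM}_{\e})'\|_{L^2(I)}=O(\e^{-1/2})$, so that $\tau=e^{-\beta p}$ can be chosen independently of $\e$ and no uncompensated negative power of $\e$ remains.
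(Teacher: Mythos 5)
Your proposal is correct and follows essentially the same route as the paper's proof: define $\Phi^{FEM,\kappa,p}_{\e} := \Phi^{u^{FEM}_{\e},\Delta_{BL}(\kappa,p),p}_{\tau}$ with $\tau = e^{-\beta p}$, bound $\|u^{FEM}_{\e}\|_{L^2}$, $\|u^{FEM}_{\e}\|_{L^\infty}$ and $\e^{1/2}\|(u^{FEM}_{\e})'\|_{L^2}$ uniformly in $\e$ via Lemma~\ref{lem:normsue} and Proposition~\ref{prop:balanced}, apply Proposition~\ref{prop:relupwpolynom} element-wise, and read off the complexity from $N\le 3$. The $\e$-bookkeeping you highlight as the delicate point is precisely the same observation the paper makes.
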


\begin{proof}
We apply Proposition \ref{prop:relupwpolynom} 
to
$u^{FEM}_{\e}\in V^p_0(\kappa)$ 
from 
Proposition \ref{prop:balanced},
with accuracy parameter $\tau = e^{-\beta p}$
for $\beta$ given in Proposition \ref{prop:balanced}
and the Spectral Boundary Layer mesh 
$\Delta := \Delta_{BL}(\kappa,p)$ 
from Definition \ref{def:SBL},
i.e. 
if $\kappa p \e<1/2$, then the number of elements is $N = 3$,
whereas if $\kappa p \e\geq 1/2$, then $N=1$.
We define 
$\Phi^{FEM,\kappa,p}_{\e} := \Phi^{u^{FEM}_{\e},\Delta,p}_{\tau}$.
We obtain from \eqref{eq:relupwpolynom1},
with $t=0$, $r = r' \in \{2,\infty\}$ and $u = v$,
that on all elements $I_j \in \Delta_{BL}(\kappa,p)$, $j=1,\ldots,N$, 
it holds that
$\normc[L^r(I_j)]{ u^{FEM}_{\e} - \realiz(\Phi^{FEM,\kappa,p}_{\e}) }
	\leq \tfrac12 \tau \normc[L^r(I_j)]{ u^{FEM}_{\e} }$,
and thus 
\begin{align*}
\normc[L^2(I)]{ u^{FEM}_{\e} - \realiz(\Phi^{FEM,\kappa,p}_{\e}) }
	\leq &\, \tfrac12 \tau \normc[L^2(I)]{ u^{FEM}_{\e} }
	\\
	\leq &\, 
		\tfrac12 \tau \left( \normc[L^2(I)]{ u_{\e} } + \normc[L^2(I)]{ u_{\e} - u^{FEM}_{\e} } \right)
	\\
	\leq &\, 
		\tfrac12 e^{-\beta p} \left( C + C e^{-\beta p} \right)
	\leq
		C e^{-\beta p}
,
\end{align*}
where we used Lemma \ref{lem:normsue} and \eqref{eq:balanced} in the third step.
Here, and in the remainder of the proof,
$\beta$ is as in Proposition \ref{prop:balanced}
and $C$ denotes 
a generic positive constant
which is independent of $\e$ and $p$,
but may be different at each appearance.
We have the same result as above, also in the maximum norm:
\begin{align*}
\normc[L^\infty(I)]{ u^{FEM}_{\e} - \realiz(\Phi^{FEM,\kappa,p}_{\e}) }
	\leq &\, 
		\tfrac12 e^{-\beta p} \left( \normc[L^\infty(I)]{ u_{\e} } + C e^{-\beta p} \right)
	\leq
		C e^{-\beta p}
.
\end{align*}
From \eqref{eq:relupwpolynom2}
we obtain
\begin{align*}
\normc[L^2(I)]{ (u^{FEM}_{\e})' - \realiz(\Phi^{FEM,\kappa,p}_{\e})' }
	\leq \tfrac12 \tau \normc[L^2(I)]{ (u^{FEM}_{\e})' }.
\end{align*}
Combined with Lemma \ref{lem:normsue} and \eqref{eq:balanced}, this gives
\begin{align*}
\normc[L^2(I)]{ (u^{FEM}_{\e})' - \realiz(\Phi^{FEM,\kappa,p}_{\e})' }
	\leq &\, \tfrac12 \tau \normc[L^2(I)]{ (u^{FEM}_{\e})' }
	\\
	\leq &\, 
		\tfrac12 \tau \left( \normc[L^2(I)]{ u_{\e}' } + \normc[L^2(I)]{ u_{\e}' - (u^{FEM}_{\e})' } \right)
	\\
	\leq &\, 
		\tfrac12 e^{-\beta p} \left( C \e^{-1/2} + C \e^{-1/2} e^{-\beta p} \right)
	\\
	\leq &\, 
		C \e^{-1/2} e^{-\beta p}
.
\end{align*}
Using the triangle inequality to combine these estimates with 
Equations \eqref{eq:balanced}--\eqref{eq:linfty}
finishes the proof of Equations \eqref{eq:relubalanced}--\eqref{eq:relulinfty}. 
By Proposition \ref{prop:relupwpolynom}, it also holds that 
$\realiz(\Phi^{FEM,\kappa,p}_{\e})(\pm1) = u^{FEM}_{\e}(\pm1) = 0 = u_{\e}(\pm1)$.

As upper bounds on the network depth and size,
we obtain from Proposition \ref{prop:relupwpolynom}
\begin{align*}
\depth\left(\Phi^{FEM,\kappa,p}_{\e}\right)
	\leq &\, C (1+\log_2(p)) \log_2(1/\tau)
		+ C (1+\log_2(p))^3 
	\\
	\leq &\, 
		\tilde{C} p (1+\log_2(p))
	,
	\\
\size\left(\Phi^{FEM,\kappa,p}_{\e}\right)
	\leq &\, C N p \big( 1 + \log_2(1/\tau) + \log_2(p) \big)
	\\
	\leq &\, \tilde{C} p^2
,
\end{align*}
for $\tilde{C} = \tilde{C}(\beta)>0$ depending only on $\beta$.
In the last step, we used that $N\leq 3$.
\end{proof}

By the same arguments as in the proof of
Proposition \ref{prop:relubalanced},
we obtain from Propositions \ref{prop:hpexp} and \ref{prop:relupwpolynom}
the following result on the approximation of 
exponential boundary layer functions.
\begin{proposition}
\label{prop:reluexp}
There exists $\tilde{\kappa}_1>0$
such that for every 
$\kappa \in (\tilde{\kappa}_1,4/e)$
and 
$p\in\N$
there exists a ReLU NN
$\Phi^{\exp,\kappa,p}_{\e}$
such that, 
with positive constants $C$, $\beta$, 
independent of $\e$ and $p$,
it holds that
\begin{align}
\label{eq:reluexph1}
\e^{1/2} \| -\exp(-\cdot/\e)/\e - \realiz(\Phi^{\exp,\kappa,p}_{\e})'\|_{L^2((0,1))} 
\leq &\, C e^{-\beta p},
\\
\label{eq:reluexpl2}
\| \exp(-\cdot/\e) - \realiz(\Phi^{\exp,\kappa,p}_{\e}) \|_{L^2((0,1))}
\leq &\, C e^{-\beta p},
\\
\| \exp(-\cdot/\e) - \realiz(\Phi^{\exp,\kappa,p}_{\e}) \|_{L^\infty((0,1))}
\leq &\, C e^{-\beta p}.
\label{eq:reluexplinfty}
\end{align}

For a constant $\tilde{C} = \tilde{C}(\beta)>0$ depending only on $\beta$,
the NN depth and size are bounded as follows: 
\begin{align}
\label{eq:reluexpdepthsize}
\depth\left(\Phi^{\exp,\kappa,p}_{\e}\right)
	\leq &\, \tilde{C} p (1+\log_2(p))
	,
	\qquad
\size\left(\Phi^{\exp,\kappa,p}_{\e}\right)
	\leq
		\tilde{C} p^2
. 
\end{align}
\end{proposition}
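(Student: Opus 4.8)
The plan is to mirror exactly the argument used for Proposition~\ref{prop:relubalanced}, but with Proposition~\ref{prop:hpexp} in place of Proposition~\ref{prop:balanced} as the source of the underlying $hp$-finite element approximant. First I would fix $\e\in(0,1]$ and $p\in\N$, set $\tilde p := p+\tfrac12$, and choose the mesh $\Delta$ from \eqref{eq:blmeshdegree} with a constant $\kappa$ satisfying $\tilde\kappa_1 \le \kappa < 4/e$, where $\tilde\kappa_1$ is the lower bound $\kappa_1$ furnished by Proposition~\ref{prop:hpexp}. Proposition~\ref{prop:hpexp} then gives a piecewise polynomial $v\in\mathcal{S}^p(\Delta)$ with $v(\pm1)=\tilde u^-_\e(\pm1)$ satisfying the combined bound \eqref{eq:blrate}, where here $\tilde u^-_\e(x)=\exp(-(1+x)/\e)$ — note that on the interval $(0,1)$, after a trivial affine reparametrisation of the variable, this is exactly $\exp(-\cdot/\e)$, which is the target function in \eqref{eq:reluexph1}--\eqref{eq:reluexplinfty}. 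I would make this identification explicit at the outset so that the rest of the proof works directly with $v$ and the function $\exp(-\cdot/\e)$ on $(0,1)$.

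Next I would apply Proposition~\ref{prop:relupwpolynom} to this $v$, with relative tolerance $\tau := e^{-\beta p}$ for the $\beta$ from Proposition~\ref{prop:hpexp}, and define $\Phi^{\exp,\kappa,p}_\e := \Phi^{v,\Delta,p}_\tau$. Using \eqref{eq:relupwpolynom1} with $t=0$, $r=r'\in\{2,\infty\}$, $u=v$, element by element and summing gives $\normc[L^2((0,1))]{v-\realiz(\Phi^{\exp,\kappa,p}_\e)} \le \tfrac12\tau\normc[L^2((0,1))]{v}$ and likewise in $L^\infty$; using \eqref{eq:relupwpolynom2} gives $\normc[L^2((0,1))]{v'-\realiz(\Phi^{\exp,\kappa,p}_\e)'} \le \tfrac12\tau\normc[L^2((0,1))]{v'}$. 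To convert the relative bounds into absolute ones I would bound $\normc[L^2((0,1))]{v}$, $\normc[L^\infty((0,1))]{v}$ and $\normc[L^2((0,1))]{v'}$ by the corresponding norms of $\exp(-\cdot/\e)$ plus the approximation error from \eqref{eq:blrate}. Here $\normc[L^2((0,1))]{\exp(-\cdot/\e)} \le C\e^{1/2}$ and $\normc[L^\infty((0,1))]{\exp(-\cdot/\e)}=1$ and $\normc[L^2((0,1))]{(\exp(-\cdot/\e))'} = \e^{-1}\normc[L^2((0,1))]{\exp(-\cdot/\e)} \le C\e^{-1/2}$; combining these with \eqref{eq:blrate} (which controls the errors in weighted norms with the right powers of $\e$) and the triangle inequality yields \eqref{eq:reluexph1}--\eqref{eq:reluexplinfty}, where the $\e^{1/2}$ prefactor on the derivative term is precisely what makes the bound $\e$-uniform. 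The depth and size bounds \eqref{eq:reluexpdepthsize} follow verbatim from the complexity estimates in Proposition~\ref{prop:relupwpolynom} with $N\le 2$ and $\tau=e^{-\beta p}$, exactly as at the end of the proof of Proposition~\ref{prop:relubalanced}.

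The only genuine subtlety — and the step I would be most careful about — is the treatment of the $\e$-weights and the regime split in \eqref{eq:blmeshdegree}. One must make sure that the weighted combination in \eqref{eq:blrate}, namely $\e^{1/2}\normc[L^2]{(\tilde u^-_\e)'-v'} + \e^{-1/2}\normc[L^2]{\tilde u^-_\e-v} + \normc[L^\infty]{\tilde u^-_\e-v}$, dominates each of the three quantities appearing in \eqref{eq:reluexph1}--\eqref{eq:reluexplinfty} after the relative-to-absolute conversion: the $L^2$ bound on the function follows from the middle term (dropping the $\e^{-1/2}\ge 1$ factor), the $L^\infty$ bound from the last term, and the weighted $H^1$ bound from the first term — and crucially the relative-error step only introduces factors of the form $\tfrac12\tau(\|\exp(-\cdot/\e)\|_* + \text{error})$, so the $\e$-scaling of $\|\exp(-\cdot/\e)\|_*$ in each norm is exactly compensated. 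In the degenerate regime $\kappa\tilde p\e\ge 2$ the mesh is trivial ($N=1$) and, by Remark~\ref{rem:factorialconv}, the FE error is in fact super-exponentially small, so the bound only improves; I would simply remark that both regimes are covered and that $N\le 2$ in all cases, which is all that the size estimate needs. Everything else is a routine repetition of the bookkeeping already carried out in the proof of Proposition~\ref{prop:relubalanced}.
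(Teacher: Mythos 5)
Your overall strategy is the same as the paper's---combine Proposition~\ref{prop:hpexp} with Proposition~\ref{prop:relupwpolynom} and run the bookkeeping of Proposition~\ref{prop:relubalanced}---and the norm calculations, the relative-to-absolute conversion using $\|\exp(-\cdot/\e)\|_{L^2((0,1))}\le C\e^{1/2}$, $\|\exp(-\cdot/\e)\|_{L^\infty((0,1))}=1$, $\|(\exp(-\cdot/\e))'\|_{L^2((0,1))}\le C\e^{-1/2}$, and the size/depth estimates with $N\le 2$ are all fine. But there is a genuine gap in the step you explicitly wave off as a ``trivial affine reparametrisation.'' Proposition~\ref{prop:hpexp} gives $v$ on $(-1,1)$ approximating $\tilde u^-_\e(x)=\exp(-(1+x)/\e)$, whereas the proposition to be proved lives on $(0,1)$ with target $\exp(-\cdot/\e)$. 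No affine map that sends $(0,1)$ \emph{onto} $(-1,1)$ pulls $\tilde u^-_\e$ back to $\exp(-\cdot/\e)$: the scaling $P:x\mapsto 2x-1$ maps $(0,1)$ onto $(-1,1)$ but gives $\tilde u^-_\e\circ P(x)=\exp(-2x/\e)$, i.e.\ the wrong exponential rate; the unit shift $x\mapsto x-1$ gives the right rate $\exp(-x/\e)$ but maps $(0,1)$ into $(-1,0)$ only, which changes the regime threshold $\kappa\tilde p\e<2$ of \eqref{eq:blmeshdegree} to $\kappa\tilde p\e<1$ and requires restricting the estimates of \eqref{eq:blrate} to a subinterval. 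So the ``identification'' is precisely where the work is, and you have misplaced the subtlety: you flag the $\e$-weights and regime split, but the factor-of-two mismatch is the real issue.

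The paper resolves this by invoking Proposition~\ref{prop:hpexp} with $2\e$ in place of $\e$ and then composing $\tilde u^-_{2\e}$ and $v$ with $P:x\mapsto 2x-1$, so that $\tilde u^-_{2\e}\circ P(x)=\exp(-(1+2x-1)/(2\e))=\exp(-x/\e)$ and the transformed mesh $P^{-1}(\Delta)$ sits in $(0,1)$; the factor $\|P'\|_{L^\infty}=2$ is absorbed into the constant in the $H^1$-seminorm estimate. Additionally, you define $\Phi^{\exp,\kappa,p}_\e:=\Phi^{v,\Delta,p}_\tau$ by applying Proposition~\ref{prop:relupwpolynom} directly to $v$ on the mesh $\Delta\subset(-1,1)$, which produces a ReLU network emulating $v$ on $(-1,1)$, not on $(0,1)$; one must apply Proposition~\ref{prop:relupwpolynom} to $v\circ P$ and the pulled-back mesh on $(0,1)$ (harmless, since the affine map $P$ can be absorbed into the first layer of the ReLU NN, but it must be stated). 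Thus the proposal as written would produce a network on the wrong domain approximating the wrong exponential, and fixing it requires exactly the $2\e$-substitution you omitted.
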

\begin{proof}
Let $\tilde{u}^-_{2\e}$ and $v$ be as in Proposition \ref{prop:hpexp},
with $2\e$ in place of $\e$.
Composing both 
$\tilde{u}^-_{2\e}$
and
$v$
with the affine transformation $P:[0,1]\to[-1,1]:x\mapsto 2x-1$
gives
$\exp(-x/\e) = \tilde{u}^-_{2\e} \circ P (x)$ for all $x\in(0,1)$ and,
for $r=2,\infty$,
\begin{align*}
\norm[L^r((0,1))]{ \exp(-x/\e) - v \circ P }
	\leq &\, \norm[L^r((-1,1))]{ \tilde{u}^-_{2\e} - v } 
	\leq C \exp( -\beta p)
	,
	\\
\e^{1/2} \norm[L^2((0,1))]{ -\tfrac{1}{\e} \exp(-x/\e) - (v \circ P)' }
	\leq &\, \e^{1/2} 
         \norm[L^2((-1,1))]{ (\tilde{u}^-_{2\e})' - v' } \norm[L^\infty((0,1))]{ P' }
	\\
	\leq &\, C \exp( -\beta p) \cdot 2
	= C \exp( -\beta p)
	.
\end{align*}
Now, we can apply Proposition \ref{prop:relupwpolynom}
to $v \circ P$ and $\tilde\Delta = \{ 0, \kappa\tilde{p}\e/2, 1 \}$
with accuracy $\tau = \exp(-\beta p)$
to obtain the existence of a ReLU NN
$\Phi^{\exp,\kappa,p}_{\e} := \Phi^{v\circ P, \tilde\Delta,p}_{\tau}$ 
which satisfies for $r=2,\infty$
\begin{align*}
\norm[L^r((0,1))]{ v \circ P - \realiz(\Phi^{\exp,\kappa,p}_{\e}) }
	\leq &\, \tfrac12 \exp(-\beta p) \norm[L^r((0,1))]{ v \circ P },
	\\
\norm[L^2((0,1))]{ (v \circ P)' - \realiz(\Phi^{\exp,\kappa,p}_{\e})' }
	\leq &\, \tfrac12 \exp(-\beta p) \norm[L^2((0,1))]{ (v \circ P)' }
	,
\end{align*}
from which we obtain the desired error bounds 
using the same arguments as in the proof of Proposition \ref{prop:relubalanced},
using \eqref{eq:blnorms}.

We also find the bounds on the network depth and size 
as in Proposition \ref{prop:relubalanced}:
\begin{align*}
\depth\left(\Phi^{\exp,\kappa,p}_{\e}\right)
	\leq &\, C (1+\log_2(p)) \log_2(1/\tau)
		+ C (1+\log_2(p))^3 
	\leq
		\tilde{C} p (1+\log_2(p))
	,
	\\
\size\left(\Phi^{\exp,\kappa,p}_{\e}\right)
	\leq &\, C N p \big( 1 + \log_2(1/\tau) + \log_2(p) \big)
	\leq
		\tilde{C} p^2
,
\end{align*}
for $\tilde{C} = \tilde{C}(\beta)>0$ depending only on $\beta$.
\end{proof}

\section{Spiking Neural Network Approximation}
\label{sec:spiking}
So far, we obtained expression rate bounds of strict ReLU NNs
where all activations are ReLUs, 
for solutions of the singularly perturbed, 
two-point boundary value problem \eqref{eq:de}--\eqref{eq:bc}.
Approximation rates for ReLU NNs transfer to so-called 
\emph{spiking neural networks} (SNNs) which
are at the core of some models of so-called \emph{neuromorphic computing}
(see, e.g., \cite{SWBCPG2022} and the references there).
Results in this direction go back several decades,
see e.g. \cite{Maass1997a,Maass1997b}
and the references there.
More recently,
Algorithms 1 and 2 in \cite{SWBCPG2022} 
produce for every strict ReLU NN $\Phi$
a SNN $\spike(\Phi)$ whose realization 
$\realiz(\spike(\Phi))$
is identical to the input-output map 
$x\mapsto \realiz(\Phi)(x)$ of the ReLU NN $\Phi$
up to an affine transformation of the input, 
see Proposition \ref{prop:correspspikingrelu} for details.

We use \cite[Alg.~1 and 2]{SWBCPG2022} to deduce from the 
approximation rate bounds in Section~\ref{sec:relunn} 
corresponding results for SNNs 
in terms of the number of nonzero weights in the SNN. 
We proceed as follows.
After defining SNNs, we recall the exact mapping 
from ReLU NNs to SNNs 
in 
Algorithms \ref{algo:rescalerelu} and \ref{algo:relunntosnn} below.
In Proposition \ref{prop:correspspikingrelu}
we estimate the size of the resulting SNN in terms of the size of the ReLU NN.
SNN approximation of solutions to the singularly perturbed model problem 
in Section~\ref{sec:model} is the topic of Theorem~\ref{thm:spikingbalanced}.
\subsection{Spiking Neural Network Definitions}
\label{sec:defspiking}
As in \cite{SWBCPG2022}, we consider 
SNNs with \emph{integrate-and-fire neurons} in the hidden layers,
in which each hidden layer neuron 
fires exactly once during each evaluation of the network.
The output of a hidden layer neuron $i$ in layer $\ell$ 
is the spiking time 
$(t_\ell)_i \in [t^{\min}_\ell,t^{\max}_\ell]$.
The spiking time is defined (and computed) 
as the first time $t \geq t^{\min}_\ell$ 
at which the \emph{voltage trajectory} 
$(V_\ell)_i(t)$ of neuron $i$ in layer $\ell$
attains the threshold value $(\vartheta_\ell)_i$, 
as detailed in the following definition.
The output layer consists of \emph{integration neurons}, which do not fire.
The output of each such neuron $i$ in the last layer
is the voltage at the final time $(V_L)_i(t^{\max}_L)$.

\begin{definition}[Spiking neural network (SNN)] (\cite[Section 2.1]{SWBCPG2022})
\label{def:spikingnn}
For $d,L\in\N$, a \emph{spiking neural network $\Phi$} 
with input dimension $d \geq 1$ and number of layers $L\geq 1$, 
is given by a finite sequence
of matrix-vector-vector-number-number tuples, i.e.
\begin{align*}
\Phi = &\, \big((J_1,\vartheta_1,\alpha_1,t^{\min}_1,t^{\max}_1),
\ldots,
(J_{L-1},\vartheta_{L-1},\alpha_{L-1},t^{\min}_{L-1},t^{\max}_{L-1}),
\\
&\,
(J_L,\alpha_L,t^{\min}_L,t^{\max}_L)\big)
,
\end{align*}
where in the last tuple, the vector $\vartheta_L$ is omitted.
For $N_0 := d$ 
and \emph{numbers of neurons $N_1,\ldots,N_L\in\N$ per layer}, 
for all $\ell=1,\ldots, L$ it holds that
$J_\ell\in\R^{N_\ell \times N_{\ell-1} }$,
$\vartheta_\ell, \alpha_\ell \in\R^{N_\ell}$
and
$t^{\min}_\ell, t^{\max}_\ell \in\R$,
with the exception that we do not consider $\vartheta_L$.
In addition, we require that
$t^{\max}_{\ell-1} = t^{\min}_{\ell}$ for all $\ell=1,\ldots,L$
and that
\[
0 = t^{\min}_0 < t^{\min}_1 = 1 < \cdots < t^{\min}_L
\;\;\mbox{and}\;\;
t^{\max}_0 = 1 < t^{\max}_1 < \cdots < t^{\max}_{L-1} = t^{\max}_L.
\]

The input of $\Phi$ 
comprises the firing times $t_0 \in [t^{\min}_0, t^{\max}_0]^d$ 
of the neurons in the input layer.
For all $\ell=1,\ldots,{L-1}$ and $i=1,\ldots,N_\ell$,
the spiking time $(t_\ell)_i \in [t^{\min}_\ell, t^{\max}_\ell]$
of neuron $i$ in layer $\ell$
is defined as 
the first time $t \geq t^{\min}_\ell$ 
at which the 
\emph{voltage trajectory} $(V_\ell)_i(t)$
attains or exceeds the \emph{threshold} $(\vartheta_\ell)_i \in\R$,
where $(V_\ell)_i(t)$ is defined by 
$(V_\ell)_i(t^{\min}_{\ell-1}) = 0$
and the following ODE, which holds for all 
$t\in(t^{\min}_{\ell-1},t^{\max}_{\ell})$:
\begin{align}
\label{eq:voltageode}
\tfrac{\dd}{\dd t} (V_\ell)_i(t)
	= &\, (\alpha_\ell)_i H( t - t^{\min}_{\ell-1} )
		+ \sum_{j=1}^{N_{\ell-1}} (J_\ell)_{ij} H( t - (t_{\ell-1})_j )
		+ (I_\ell)_i(t)
.
\end{align}
Here, $H:\R\to\R$ denotes the Heaviside function,
defined by $H(x) = 1$ for $x>0$ and $H(x) = 0$ else.
The values $(J_\ell)_{ij}$ are called \emph{weights}
and $(\alpha_\ell)_i$ is the \emph{slope parameter}.
In layers $\ell = 1,\ldots,L-1$, a nonnegative short \emph{pulse} $(I_\ell)_i(t)$ 
is used to force the neuron to spike at the latest at $t^{\max}_{\ell}$.
In the output layer $\ell = L$, 
the voltage trajectory is also defined by \eqref{eq:voltageode},
with $I_L \equiv 0$.
The output of $\Phi$ 
comprises the voltages of the neurons in the output layer
at time $t^{\max}_L = t^{\max}_{L-1}$,
which we denote by
$\realiz(\Phi)(t_0) := (V_L)(t^{\max}_L) \in\R^{N_L}$.

We refer to $\depth(\Phi) := L$ as the \emph{depth} of $\Phi$
and call 
$\size(\Phi) := \sum_{\ell=1}^L \norm[0]{J_\ell}$
the \emph{size} of $\Phi$,
which is the number of nonzero components 
in the weight matrices $J_\ell$.
Furthermore, we call $d$ and $N_L$ the
\emph{input dimension} and the \emph{output dimension}.

\end{definition}

In \cite{SWBCPG2022}, 
for neuron $i=1,\ldots,N_\ell$ in hidden layer $\ell=1,\ldots,L-1$,
the pulse is defined 
in terms of the Dirac delta distribution
as $(I_\ell)_i(t) = R \delta( t - t^{\max}_\ell)$
for some sufficiently large $R>0$.
Denoting by $(\tilde{V}_\ell)_i$ the voltage trajectory in case $(I_\ell)_i \equiv 0$,
it is sufficient to set $R = (\vartheta_\ell)_i - (\tilde{V}_\ell)_i(t^{\max}_\ell)$.

\begin{remark}\label{rmk:EqivPlse}
For any $0 < \eta \leq t^{\max}_\ell - t^{\min}_\ell$
we could equivalently consider the current pulse 
\[
(I_\ell)_i(t) = 
\begin{cases}
	0 & \text{ for } t \in (t^{\min}_{\ell-1}, t^{\max}_\ell - \eta),
	\\
	\big( (\vartheta_\ell)_i - (\tilde{V}_\ell)_i(t^{\max}_\ell) \big) / \eta
	& \text{ for } t \in ( t^{\max}_\ell - \eta, t^{\max}_\ell ),
\end{cases}
\]
such that $(V_\ell)_i(t^{\max}_\ell) = (\vartheta_\ell)_i$.
To ensure that the SNN does not fire earlier than at time $t^{\max}_\ell$,
it suffices to choose $\eta$ small enough,
e.g. such that
$\big( (\vartheta_\ell)_i - (\tilde{V}_\ell)_i(t^{\max}_\ell) \big) /\eta 
	> \max_{t\in[t^{\max}_\ell-\eta,t^{\max}_\ell]} \snorm{(\tilde{V}_\ell)_i'(t)}$.
From this we obtain that $(V_\ell)_i'(t) > 0$ 
for all $t\in ( t^{\max}_\ell - \eta, t^{\max}_\ell )$,
and thus that $(V_\ell)_i(t) < (V_\ell)_i(t^{\max}_\ell)$ for all such $t$.
For all $t < t^{\max}_\ell - \eta$, the pulse does not affect $(\tilde{V}_\ell)_i(t)$,
hence also for such $t$ the SNN does not spike.
\end{remark}

\begin{remark}\label{rmk:SpkTime}
Imposing $(t_\ell)_i \geq t^{\min}_{\ell}$
is important.
Although the voltage trajectory $(V_\ell)_i(t)$ 
may attain or exceed the threshold value $(\vartheta_\ell)_i$ 
at an earlier time,
we do not want the neuron to fire earlier than $t^{\min}_{\ell}$.
In \cite{SWBCPG2022}, 
this is interpreted as using a time-dependent threshold,
which equals the previously specified value 
for $t\geq t^{\min}_\ell$,
and a very large value 
for $t<t^{\min}_\ell$.
\end{remark}

\subsection{ReLU to Spiking Neural Network Conversion}
\label{sec:converttospiking}

Next, we state a version of 
\cite[Algorithms 1 and 2]{SWBCPG2022}
for transforming feedforward ReLU networks.
For each ReLU NN, 
the SNN produced by these algorithms
has the same input dimension, output dimension,
depth and the same layer dimensions,
see Proposition \ref{prop:correspspikingrelu}.
A large output value of a neuron from the ReLU NN
corresponds to early spiking of the corresponding SNN neuron.

We have slightly modified line 15 from
\cite[Algorithm 1]{SWBCPG2022}
to define an exact mapping from ReLU NNs to spiking NNs
without making use of a training data set.
See Remark \ref{rem:range} below.

\begin{algorithm}
\caption{
\cite[Algorithm 1]{SWBCPG2022}
The inputs are 
$d,L \in \N$, 
$N_1,\ldots,N_L\in\N$,
constants $\delta\in(0,1)$ and $B>0$
and a ReLU NN $\Phi$
which takes inputs from $[x^{\min},x^{\max}]^d$.
The output is a ReLU NN 
$\overline\Phi = ((\overline{A}_1,\overline{b}_1),\ldots,(\overline{A}_L,\overline{b}_L))$
with the same input dimension $d$, 
depth $L$ and 
layer dimensions $N_1,\ldots,N_L$,
which takes inputs from $[0,1]^d$
such that $\realiz(\Phi)(x) = \realiz(\overline\Phi)(\overline{x})$
for all $x\in[x^{\min},x^{\max}]^d$ and 
$\overline{x} = \tfrac{ 1 }{ x^{\max} - x^{\min} } ( x - x^{\min} (1,\ldots,1)^\top )$,
and such that for all $\ell=1,\ldots,L-1$ and $i=1,\ldots,N_\ell$ holds
$\sum_{j=1}^{N_{\ell-1}} \overline{A}_{ij} \in [-B,\delta]$.
To rescale the weights and biases of the given ReLU NN $\Phi$
without changing the NN output,
the algorithm exploits the positive homogeneity of the ReLU activation function
$\rho(\lambda x) = \lambda \rho(x)$ for all $\lambda>0$ and $x\in\R$.
}
\label{algo:rescalerelu}
\begin{algorithmic}[1] 
\Input $d,L \in \N$, 
$N_1,\ldots,N_L\in\N$,
$x^{\min},x^{\max}\in\R$, $x^{\min} < x^{\max}$,
$\delta\in(0,1)$, $B>0$
and 
a ReLU NN $\Phi = ((A_1,b_1),\ldots,$ $(A_L,b_L))$
\Output A ReLU NN 
$\overline\Phi 
	= ((\overline{A}_1,\overline{b}_1),\ldots,$ $(\overline{A}_L,\overline{b}_L))$
\For{$i=1,\ldots,N_1$}
	\label{line:fornodes-i}
	\State
	For $j=1,\ldots,N_0$:
	$(\overline{A}_1)_{ij} \leftarrow ( x^{\max} - x^{\min} ) (A_1)_{ij}$
	\label{line:scaleinputweights}
	\State
	$(\overline{b}_1)_i \leftarrow (b_1)_i + x^{\min}\sum_{j=1}^{d} (A_1)_{ij}$
	\label{line:scaleinputbias}
\EndFor
\State
For $\ell=2,\ldots,L$:
$\overline{A}_\ell \leftarrow A_\ell$, $\overline{b}_\ell \leftarrow b_\ell$, 
\label{line:initialize}
\For{$\ell=1,\ldots,L-1$} 
\label{line:forlayers-i}
	\For{$i=1,\ldots,N_\ell$}
	\label{line:fornodes-ii}
		\State
		$(c_\ell)_i \leftarrow \sum_{j=1}^{N_{\ell-1}} \overline{A}_{ij}$
		\label{line:c}
		\If{$(c_\ell)_i > 1-\delta$}
		\label{line:ifctoopos}
			\State 
			For $j=1,\ldots,N_{\ell-1}$: 
			$(\overline{A}_{\ell})_{ij} \leftarrow 
				\tfrac{1-\delta}{(c_\ell)_i} (\overline{A}_{\ell})_{ij}$
			\label{line:poscprevweight}
			\State 
			$(\overline{b}_{\ell})_{i} \leftarrow 
				\tfrac{1-\delta}{(c_\ell)_i} (\overline{b}_{\ell})_{i}$
			\label{line:poscbias}
			\State 
			For $k=1,\ldots,N_{\ell+1}$: 
			$(\overline{A}_{\ell+1})_{ki} \leftarrow 
				\tfrac{(c_\ell)_i}{1-\delta} (\overline{A}_{\ell+1})_{ki}$
			\label{line:poscnextweight}
		\ElsIf{$(c_\ell)_i < - B$}
		\label{line:ifctooneg}
			\State 
			For $j=1,\ldots,N_{\ell-1}$: 
			$(\overline{A}_{\ell})_{ij} \leftarrow 
				\tfrac{B}{\snorm{(c_\ell)_i}} (\overline{A}_{\ell})_{ij}$
			\label{line:negcprevweight}
			\State 
			$(\overline{b}_{\ell})_{i} \leftarrow 
				\tfrac{B}{\snorm{(c_\ell)_i}} (\overline{b}_{\ell})_{i}$
			\label{line:negcbias}
			\State 
			For $k=1,\ldots,N_{\ell+1}$: 
			$(\overline{A}_{\ell+1})_{ki} \leftarrow 
				\tfrac{\snorm{(c_\ell)_i}}{B} (\overline{A}_{\ell+1})_{ki}$
			\label{line:negcnextweight}
		\EndIf
	\EndFor
\EndFor
\State
\Return $\overline\Phi 
	\leftarrow ((\overline{A}_1,\overline{b}_1),\ldots,$ $(\overline{A}_L,\overline{b}_L))$
\label{line:rescaledreturn}
\end{algorithmic}
\end{algorithm}

\begin{algorithm}
\caption{
\cite[Algorithm 2]{SWBCPG2022}
The inputs are 
$d,L \in \N$, 
$N_1,\ldots,N_L\in\N$,
constants $\delta\in(0,1)$, $B>0$
and a ReLU NN $\Phi$
which takes inputs from $[x^{\min},x^{\max}]^d$.
The output is a spiking neural network $\spike(\Phi)$
with the same 
input dimension $d$, 
depth $L$ and 
layer dimensions $N_1,\ldots,N_L$.
First, 
the neural network weights are rescaled using Algorithm \ref{algo:rescalerelu}.
Then, a spiking neural network is defined 
such that 
for all $x\in[x_{\min},x_{\max}]^d$,
with 
$\overline{x} = \tfrac{ 1 }{ x^{\max} - x^{\min} } ( x - x^{\min} (1,\ldots,1)^\top )$
and $t_0 = (1,\ldots,1)^\top - \overline{x}$,
for all $\ell=1,\ldots,L-1$,
the output 
$(\overline{x}_\ell)_i
	:= \realiz(((\overline{A}_1,\overline{b}_1),\ldots,
	(\overline{A}_\ell,\overline{b}_\ell),
	(I_{N_\ell\times N_\ell},0_{N_\ell})))(\overline{x})$ 
of neuron $i$ in layer $\ell$ of the rescaled ReLU NN
after applying ReLU activation
corresponds to a spiking time 
$(t_\ell)_i = t^{\max}_\ell - (\overline{x}_\ell)_i$,
and such that
$\realiz(\Phi)(x) = \realiz(\spike(\Phi))( t_0 )$.}
\label{algo:relunntosnn}
\begin{algorithmic}[1] 
\Input 
$d,L \in \N$, $N_1,\ldots,N_L\in\N$,
$x^{\min},x^{\max}\in\R$, $x^{\min} < x^{\max}$,
$\delta\in(0,1)$, $B>0$
and 
a ReLU NN $\Phi = ((A_1,b_1),\ldots,$ $(A_L,b_L))$
\Output 
An SNN 
$\spike(\Phi) = ((J_1,\vartheta_1,\alpha_1,t^{\min}_1,t^{\max}_1),\ldots,
(J_L,\alpha_L,t^{\min}_L,t^{\max}_L))$
\State
Compute
$\overline\Phi 
	\leftarrow ((\overline{A}_1,\overline{b}_1),\ldots,$ $(\overline{A}_L,\overline{b}_L))$
with Algorithm \ref{algo:rescalerelu}
\label{line:rescalerelu}
\State 
$t^{\min}_0 \leftarrow 0$,
$t^{\max}_0 \leftarrow 1$
\label{line:setfirstparams}
\For{$\ell=1,\ldots,L-1$} 
\label{line:forlayers-ii}
	\State
	$X_\ell \leftarrow \max_{\overline{x}\in[0,1]^d} 
	\| \realiz(((\overline{A}_1,\overline{b}_1),\ldots,
	(\overline{A}_\ell,\overline{b}_\ell),
	(I_{N_\ell\times N_\ell},0_{N_\ell})))(\overline{x}) \|_\infty$,
	where $I_{N_\ell\times N_\ell} \in \R^{N_\ell \times N_\ell}$ denotes the identity matrix,
	and $0_{N_\ell} \in \R^{N_\ell}$ the zero vector.
	\label{line:maxoutput}
	\State 
	$t^{\min}_\ell \leftarrow t^{\max}_{\ell-1}$,
	$t^{\max}_\ell \leftarrow t^{\max}_{\ell-1} + X_\ell$,
	$\alpha_\ell \leftarrow (1,\ldots,1)^\top \in \R^{N_\ell}$
	\label{line:tminmax}
	\For{$i=1,\ldots,N_\ell$}
	\label{line:fornodes-iii}
		\State
		For $j=1,\ldots,N_{\ell-1}$:
		$(J_\ell)_{ij} \leftarrow (\alpha_\ell)_i (\overline{A}_\ell)_{ij} 
			/ \Big( 1 - \sum_{j=1}^{N_{\ell-1}} (\overline{A}_\ell)_{ij} \Big)$
		\label{line:jdef}
		\State
		$(\vartheta_\ell)_i \leftarrow (\alpha_\ell)_i ( t^{\max}_\ell - t^{\min}_{\ell-1} ) 
			+ \sum_{j=1}^{N_{\ell-1}} (J_\ell)_{ij} ( t^{\max}_\ell - t^{\min}_{\ell} ) $
		\newline
			$- \Big( (\alpha_\ell)_i + \sum_{j=1}^{N_{\ell-1}} (J_\ell)_{ij} \Big) (\overline{b}_\ell)_i$
		\label{line:varthetadef}
	\EndFor
\EndFor
\State
$t^{\min}_L \leftarrow t^{\max}_{L-1}$,
$t^{\max}_L \leftarrow t^{\max}_{L-1}$,
$J_L \leftarrow \overline{A}_L$,
$\alpha_L \leftarrow \overline{b}_L / ( t^{\max}_{L-1} - t^{\min}_{L-1} )$
\label{line:lastlayer}
\State
\Return $\spike(\Phi) \leftarrow ((J_1,\vartheta_1,\alpha_1,t^{\min}_1,t^{\max}_1),\ldots,
(J_L,\alpha_L,t^{\min}_L,t^{\max}_L))$
\label{line:return}
\end{algorithmic}
\end{algorithm}

\begin{remark}
\label{rem:range}
In Line \ref{line:maxoutput} of Algorithm \ref{algo:relunntosnn},
we slightly deviate from Line 15 in \cite[Algorithm 1]{SWBCPG2022}.
Because the ReLU NN 
$((\overline{A}_1,\overline{b}_1),\ldots,
	(\overline{A}_\ell,\overline{b}_\ell),
	(I_{N_\ell\times N_\ell},$ $0_{N_\ell}))$
realizes a continuous function
and we only consider inputs $\overline{x}$ from the compact set $[0,1]^d$,
the maximum in Line \ref{line:maxoutput} exists and is finite.
We will use this theoretical value of $X_\ell$.
We note that in \cite{SWBCPG2022},
it is argued that computing the maximum over 
(a statistically representative subset of) the training data 
is sufficient in practice.
See part (iv) of \cite[Section 4.1]{SWBCPG2022}.
By defining $X_\ell$ to be the theoretical maximum, 
rather than an empirical maximum,
it is not necessary anymore to multiply it with a factor $(1+\zeta)$ for $\zeta>0$
to obtain an upper bound that also holds 
for (practically) all inputs $\overline{x}\in[0,1]^d$.
This multiplicative factor was used in part (iii) of \cite[Section 4.2]{SWBCPG2022},
we do not use it here.

Another difference and simplification with respect to \cite{SWBCPG2022}
is that we are only interested in transforming feedforward neural networks
without convolutional layers, batch normalization and max pooling.
See \cite{SWBCPG2022} for the transformation of such features.
\end{remark}

\begin{proposition}[{\cite[Theorem and Corollary in Section 2.1]{SWBCPG2022}}]
\label{prop:correspspikingrelu}
Let
$d,L \in \N$, 
$N_1,\ldots,N_L\in\N$,
$x^{\min},x^{\max}\in\R$, $x^{\min}<x^{\max}$,
$\delta\in(0,1)$, $B>0$
and let $\Phi = ((A_1,b_1),\ldots,$ $(A_L,b_L))$
be a ReLU NN.

Then, the SNN $\spike(\Phi)$ which is the output of Algorithm \ref{algo:relunntosnn}
has input dimension $d$, depth $L$ and layer dimensions $N_1,\ldots,N_L$ 
and satisfies, for all inputs $x\in[x^{\min},x^{\max}]^d$,
with $\overline{x} = \tfrac{ 1 }{ x^{\max} - x^{\min} } ( x - x^{\min} (1,\ldots,1)^\top )$
and $t_0 = (1,\ldots,1)^\top - \overline{x}$,
that 
$\realiz(\Phi)(x) = \realiz(\spike(\Phi))( t_0 )$.
In addition, 
$\size(\spike(\Phi)) \leq \size(\Phi)$.
\end{proposition}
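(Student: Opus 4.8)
The statement reproduces \cite[Section~2.1]{SWBCPG2022}, and I would establish it by tracing through the concrete Algorithms~\ref{algo:rescalerelu} and \ref{algo:relunntosnn} above (which realize that construction with the modification of Remark~\ref{rem:range}). The argument has three parts. First, Algorithm~\ref{algo:rescalerelu} returns a ReLU NN $\overline\Phi$ with $\realiz(\overline\Phi)(\overline x) = \realiz(\Phi)(x)$ for all $x\in[x^{\min},x^{\max}]^d$: the change of variables $x\mapsto\overline x$ is absorbed into the first layer by Lines~\ref{line:scaleinputweights}--\ref{line:scaleinputbias}, and the per-neuron rescalings in Lines~\ref{line:poscprevweight}--\ref{line:negcnextweight} leave $\realiz$ invariant because, by the positive homogeneity $\rho(\lambda t) = \lambda\rho(t)$ for $\lambda>0$, multiplying the incoming weights and bias of a hidden neuron by $\lambda>0$ and its outgoing weights by $1/\lambda$ does not change the network map; these rescalings (with $\lambda = (1-\delta)/(c_\ell)_i$ or $\lambda = B/|(c_\ell)_i|$) are calibrated so that, since the layers are processed in increasing order, each row sum $(c_\ell)_i := \sum_j (\overline A_\ell)_{ij}$ of the returned network lies in $[-B,1-\delta]$ and is not altered once set. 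As all scaling factors and $x^{\max}-x^{\min}$ are nonzero, $\overline A_\ell$ has the same support as $A_\ell$, so $\norm[0]{\overline A_\ell} = \norm[0]{A_\ell}$.

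The core is the analysis of Algorithm~\ref{algo:relunntosnn}. With input $t_0 = (1,\ldots,1)^\top - \overline x$, I would prove by induction on $\ell = 0,1,\ldots,L-1$ that the spike times satisfy $(t_\ell)_i = t^{\max}_\ell - (\overline x_\ell)_i$ for $i=1,\ldots,N_\ell$, where $(\overline x_\ell)_i\in[0,X_\ell]$ is the $i$-th post-ReLU output of layer $\ell$ of $\overline\Phi$ on input $\overline x$ (with $(\overline x_0)_i := \overline x_i$). The base case is the definition of $t_0$ and $t^{\max}_0 = 1$. For the step, I would integrate the voltage ODE~\eqref{eq:voltageode} from $(V_\ell)_i(t^{\min}_{\ell-1}) = 0$: on $[t^{\min}_\ell, t^{\max}_\ell) = [t^{\max}_{\ell-1}, t^{\max}_{\ell-1}+X_\ell)$ every Heaviside term from a layer-$(\ell-1)$ spike is active, since $(t_{\ell-1})_j = t^{\max}_{\ell-1} - (\overline x_{\ell-1})_j \le t^{\max}_{\ell-1}$ by the induction hypothesis, and the forcing pulse (Remark~\ref{rmk:EqivPlse}) has not yet begun, so $(V_\ell)_i$ is affine there with slope $(\alpha_\ell)_i + \sum_j(J_\ell)_{ij} = (\alpha_\ell)_i/(1-(c_\ell)_i) > 0$ (using $\alpha_\ell = (1,\ldots,1)^\top$ and $(c_\ell)_i \le 1-\delta < 1$). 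Substituting the induction hypothesis and the definitions of $(J_\ell)_{ij}$, $(\vartheta_\ell)_i$ from Lines~\ref{line:jdef}--\ref{line:varthetadef}, a short computation gives that $(V_\ell)_i(t) = (\vartheta_\ell)_i$ exactly at $t = t^{\max}_\ell - (\overline A_\ell\overline x_{\ell-1} + \overline b_\ell)_i$. If the pre-activation $(\overline A_\ell\overline x_{\ell-1} + \overline b_\ell)_i \ge 0$, this equals $t^{\max}_\ell - (\overline x_\ell)_i$, which lies in $[t^{\min}_\ell, t^{\max}_\ell]$ because $0\le(\overline x_\ell)_i\le X_\ell$ by the choice of $X_\ell$ in Line~\ref{line:maxoutput}; by the strict monotonicity of $(V_\ell)_i$ on $[t^{\min}_\ell,t^{\max}_\ell)$ and the time-dependent-threshold convention of Remark~\ref{rmk:SpkTime} (which discards any earlier crossing on $(t^{\min}_{\ell-1},t^{\min}_\ell)$), this is the spike time. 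If the pre-activation is negative, the affine trajectory never reaches $(\vartheta_\ell)_i$ and the pulse forces a spike at $t^{\max}_\ell = t^{\max}_\ell - (\overline x_\ell)_i$, since then $(\overline x_\ell)_i = 0$. This closes the induction.

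It remains to evaluate the output layer and the size. In layer $L$ the voltage obeys \eqref{eq:voltageode} with $I_L\equiv 0$, $J_L = \overline A_L$, $\alpha_L = \overline b_L/(t^{\max}_{L-1}-t^{\min}_{L-1})$, $t^{\min}_L = t^{\max}_L = t^{\max}_{L-1}$; integrating and inserting the identity just proved for $\ell = L-1$ yields $\realiz(\spike(\Phi))(t_0) = (V_L)(t^{\max}_L) = \overline A_L\overline x_{L-1} + \overline b_L = \realiz(\overline\Phi)(\overline x) = \realiz(\Phi)(x)$, the asserted identity; the input/output dimensions, depth and layer dimensions are immediate from the construction. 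For the size, by Definition~\ref{def:spikingnn} $\size(\spike(\Phi)) = \sum_{\ell=1}^L\norm[0]{J_\ell}$; for $\ell<L$ each $(J_\ell)_{ij}$ is a nonzero scalar multiple of $(\overline A_\ell)_{ij}$ (the factor $(\alpha_\ell)_i = 1$ and the denominator $1-(c_\ell)_i \ge \delta$ are nonzero) and $J_L = \overline A_L$, so $\norm[0]{J_\ell} = \norm[0]{\overline A_\ell} = \norm[0]{A_\ell}$ and $\size(\spike(\Phi)) = \sum_\ell\norm[0]{A_\ell} \le \sum_\ell\big(\norm[0]{A_\ell}+\norm[0]{b_\ell}\big) = \size(\Phi)$. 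I expect the main obstacle to be the inductive computation: keeping the interlocking time grids $\{t^{\min}_\ell\}$, $\{t^{\max}_\ell\}$, $\{X_\ell\}$ consistent, checking that exactly the intended Heaviside terms are switched on over $[t^{\min}_\ell,t^{\max}_\ell)$, and confirming — via Remarks~\ref{rmk:EqivPlse} and \ref{rmk:SpkTime} — that the trajectory neither fires prematurely on $(t^{\min}_{\ell-1},t^{\min}_\ell)$ nor fails to fire, so that the crossing time $t^{\max}_\ell - (\overline x_\ell)_i$ is genuinely the neuron's output and lies in the admissible window.
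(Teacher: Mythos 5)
Your proof is correct, and it differs from the paper's in one substantive way: where the paper disposes of the realization identity $\realiz(\Phi)(x)=\realiz(\spike(\Phi))(t_0)$ with the single line ``The formula for the realization was proved in \cite{SWBCPG2022},'' you rederive it from scratch by a layer-wise induction on the voltage ODE \eqref{eq:voltageode}. Your computation checks out: on $[t^{\min}_\ell,t^{\max}_\ell)$ all the layer-$(\ell-1)$ Heaviside terms have switched on (by the inductive hypothesis $(t_{\ell-1})_j\le t^{\max}_{\ell-1}$), the pulse has not yet engaged, and $(V_\ell)_i$ is affine with slope $(\alpha_\ell)_i+\sum_j(J_\ell)_{ij}=(\alpha_\ell)_i/(1-(c_\ell)_i)>0$; inserting the definitions from Lines~\ref{line:jdef}--\ref{line:varthetadef} and the hypothesis $(t_{\ell-1})_j=t^{\max}_{\ell-1}-(\overline x_{\ell-1})_j$ indeed yields the threshold crossing at $t^{\max}_\ell-(\overline A_\ell\overline x_{\ell-1}+\overline b_\ell)_i$, the choice of $X_\ell$ in Line~\ref{line:maxoutput} keeps this inside $[t^{\min}_\ell,t^{\max}_\ell]$ when the pre-activation is nonnegative, and the pulse implements the ReLU cutoff when it is negative. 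The output-layer integration with $\alpha_L=\overline b_L/(t^{\max}_{L-1}-t^{\min}_{L-1})$ and $J_L=\overline A_L$ then reassembles $\overline A_L\overline x_{L-1}+\overline b_L=\realiz(\overline\Phi)(\overline x)=\realiz(\Phi)(x)$ exactly as you say. The parts the paper actually proves --- the preservation of layer dimensions and the size bound $\size(\spike(\Phi))\le\size(\Phi)$ --- you handle by essentially the same observation the paper uses: none of the operations in Algorithms~\ref{algo:rescalerelu} and~\ref{algo:relunntosnn} change the shapes or the supports of the weight matrices (the paper phrases this as the signs being preserved; you phrase it as the scaling factors being nonzero, which is the same thing), so $\norm[0]{J_\ell}=\norm[0]{A_\ell}$ and the bias terms of the ReLU NN give the slack in the inequality. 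The trade-off is clear: the paper's proof is short because it properly treats the realization formula as an external theorem and only verifies the bookkeeping that \cite{SWBCPG2022} does not state in the form needed here; your version is longer but self-contained and makes transparent exactly how the spike-time encoding $(t_\ell)_i=t^{\max}_\ell-(\overline x_\ell)_i$ carries the ReLU forward pass, which is useful if a reader wants to sanity-check the conversion without opening \cite{SWBCPG2022}. One minor point in your favour: you correctly read off from the algorithm that the rescaled row sums end up in $[-B,1-\delta]$; the caption of Algorithm~\ref{algo:rescalerelu} writes $[-B,\delta]$, which appears to be a typo.
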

\begin{proof}
The formula for the realization was proved in \cite{SWBCPG2022}.

The fact that the ReLU NN $\overline{\Phi}$,
which is the output of Algorithm \ref{algo:rescalerelu} applied to a ReLU NN $\Phi$,
has the same input dimension and layer dimensions as $\Phi$
can be observed from the lines in the algorithm in which the weight matrices are initialized.
These are Lines \ref{line:scaleinputweights} and \ref{line:initialize}.
Other lines of the algorithm do not change the sizes of the weight matrices.
From Line \ref{line:rescaledreturn} 
we see that the network $\overline\Phi$ returned by the algorithm 
has the same number of layers $L$ as the input network $\Phi$.
The same ideas apply to Algorithm \ref{algo:relunntosnn},
where we see from Lines \ref{line:jdef} and \ref{line:lastlayer},
where the weight matrices are computed,
that the input dimension and the layer dimensions of $\spike(\Phi)$
equal those of the output $\overline{\Phi}$ of Algorithm \ref{algo:rescalerelu},
and thus those of $\Phi$.
From Line \ref{line:return}, 
we observe that the number of layers of $\spike(\Phi)$ is $L$,
which is the same as that of $\overline\Phi$ and that of $\Phi$.

To prove the bound on the network size,
we first observe that in all lines which affect the weights,
which are Lines 
\ref{line:scaleinputweights}, \ref{line:initialize}, 
\ref{line:poscprevweight}, \ref{line:poscnextweight}, 
\ref{line:negcprevweight} and \ref{line:negcnextweight} 
of Algorithm \ref{algo:rescalerelu}
and Lines
\ref{line:jdef}
and
\ref{line:lastlayer}
of Algorithm \ref{algo:relunntosnn},
the sign of the weights is not changed.
In particular, the number of nonzero weights of the SNN
equals that of the ReLU NN,
which implies the desired neural network size bound.
\end{proof}

\subsection{Spiking Neural Network Solution Approximation}
\label{sec:blspiking}
As a direct consequence of 
Propositions \ref{prop:relubalanced} and \ref{prop:correspspikingrelu}
we obtain the expression rate bounds for solutions of \eqref{eq:de}--\eqref{eq:bc}
with spiking NNs.
\begin{theorem}
\label{thm:spikingbalanced}
Assume that (\ref{eq:analyticfb}) holds. 
For $\e\in(0,1]$ let $u_{\e}$ be the solution of (\ref{eq:variational}).
Recall from Proposition \ref{prop:relubalanced}
the constant
$\tilde{\kappa}_0>0$ (depending only on $b$ and $f$) 
and for all
$\kappa \in (0,\tilde{\kappa}_0)$
and 
$p\in\N$
the ReLU NN
$\Phi^{FEM,\kappa,p}_{\e}$.

Then, 
with the positive constants $C$ and $\beta$
from Proposition \ref{prop:relubalanced},
independent of $\e$ and $p$,
the SNN $\spike(\Phi^{FEM,\kappa,p}_{\e})$
constructed by Algorithm \ref{algo:relunntosnn}
satisfies
\begin{align}
\label{eq:spikingbalanced}
\left\{ \e^{1/2} \|u_{\e}' - \realiz(\spike(\Phi^{FEM,\kappa,p}_{\e}))'\|_{L^2(I)} 
	+ \|u_{\e} - \realiz(\spike(\Phi^{FEM,\kappa,p}_{\e})) \|_{L^2(I)} \right\} 
\leq &\, C e^{-\beta p},
\\
\|u_{\e} - \realiz(\spike(\Phi^{FEM,\kappa,p}_{\e})) \|_{L^\infty(I)}
\leq &\, C e^{-\beta p},
\label{eq:spikinglinfty}
\end{align}
and $\realiz(\spike(\Phi^{FEM,\kappa,p}_{\e}))(\pm1) = 0$.

For a constant $\tilde{C} = \tilde{C}(\beta)>0$ depending only on $\beta$,
the SNN depth and size are bounded as follows: 
\begin{align}
\label{eq:spikingbldepthsize}
\depth\left(\spike(\Phi^{FEM,\kappa,p}_{\e})\right)
	\leq &\, \tilde{C} p (1+\log_2(p))
	,
	\qquad
\size\left(\spike(\Phi^{FEM,\kappa,p}_{\e})\right)
	\leq
		\tilde{C} p^2
. 
\end{align}
The weights in the hidden layers are independent of $u_{\e}$
and depend only on $\kappa$, $p$, $\e$ and $\beta$.
\end{theorem}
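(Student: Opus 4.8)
The plan is to derive Theorem~\ref{thm:spikingbalanced} as a direct corollary of Proposition~\ref{prop:relubalanced} (which provides the ReLU NN $\Phi^{FEM,\kappa,p}_{\e}$ with the claimed error bounds, depth, and size) together with Proposition~\ref{prop:correspspikingrelu} (which converts a ReLU NN to an SNN preserving depth, layer dimensions, and not increasing size, while exactly reproducing the realization up to an affine change of the input variable). First I would fix $\kappa\in(0,\tilde\kappa_0)$ and $p\in\N$, recall $\Phi^{FEM,\kappa,p}_{\e}$ from Proposition~\ref{prop:relubalanced}, and observe that its realization is a continuous piecewise polynomial supported on the interval $I=(-1,1)$, so $x^{\min}=-1$, $x^{\max}=1$ are valid input bounds. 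Applying Algorithm~\ref{algo:relunntosnn} with these bounds (and any admissible $\delta\in(0,1)$, $B>0$) produces $\spike(\Phi^{FEM,\kappa,p}_{\e})$; by Proposition~\ref{prop:correspspikingrelu}, for every $x\in[-1,1]$ with $\overline{x}=\tfrac12(x+1)$ and $t_0=1-\overline{x}$ we have $\realiz(\spike(\Phi^{FEM,\kappa,p}_{\e}))(t_0)=\realiz(\Phi^{FEM,\kappa,p}_{\e})(x)$.

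The next step is to transfer the Sobolev and $L^\infty$ error estimates. Since the input map $x\mapsto t_0 = (1-x)/2$ is a fixed affine bijection of $[-1,1]$ onto $[0,1]$ with constant Jacobian $1/2$, composition with it changes $L^2$, $H^1$, and $L^\infty$ norms only by fixed constants independent of $\e$ and $p$; more to the point, because $\realiz(\spike(\Phi))\circ(x\mapsto t_0)$ equals $\realiz(\Phi)$ \emph{pointwise} as functions of $x$ on $I$, the quantities $\|u_{\e}-\realiz(\spike(\Phi^{FEM,\kappa,p}_{\e}))\circ(\cdot)\|$ in any of these norms are literally equal to the corresponding quantities for $\realiz(\Phi^{FEM,\kappa,p}_{\e})$ already bounded in \eqref{eq:relubalanced}--\eqref{eq:relulinfty}. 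Interpreting the SNN realization as a function on $I$ via this fixed reparametrization (which is the natural convention, the input $t_0$ being a rescaled copy of $x$), the bounds \eqref{eq:spikingbalanced}--\eqref{eq:spikinglinfty} follow immediately, as does $\realiz(\spike(\Phi^{FEM,\kappa,p}_{\e}))(\pm1)=0$ from the endpoint interpolation property in Proposition~\ref{prop:relubalanced}.

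For the complexity bounds, Proposition~\ref{prop:correspspikingrelu} gives $\depth(\spike(\Phi^{FEM,\kappa,p}_{\e}))=\depth(\Phi^{FEM,\kappa,p}_{\e})$ and $\size(\spike(\Phi^{FEM,\kappa,p}_{\e}))\leq\size(\Phi^{FEM,\kappa,p}_{\e})$, so \eqref{eq:spikingbldepthsize} is inherited verbatim from \eqref{eq:relubldepthsize} with the same constant $\tilde C(\beta)$. Finally, the statement that the SNN weights in the hidden layers are independent of $u_{\e}$ follows by inspecting Algorithms~\ref{algo:rescalerelu} and~\ref{algo:relunntosnn}: all operations on the hidden-layer weight matrices $J_\ell$, thresholds $\vartheta_\ell$, slopes $\alpha_\ell$, and the times $t^{\min}_\ell,t^{\max}_\ell$ depend only on the hidden-layer weights and biases of $\Phi^{FEM,\kappa,p}_{\e}$ (which Proposition~\ref{prop:relubalanced} asserts depend only on $\kappa,p,\e,\beta$) and on the fixed auxiliary parameters $\delta,B,x^{\min},x^{\max}$ and the quantities $X_\ell$ in Line~\ref{line:maxoutput}, the latter being determined by those same hidden-layer parameters; the function values of $u^{FEM}_{\e}$ enter only the output-layer weights $\overline A_L$, hence only $J_L$ and $\alpha_L$, which are not hidden-layer parameters. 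The only point requiring a sentence of care is the choice of a consistent reparametrization convention so that ``$\realiz(\spike(\Phi^{FEM,\kappa,p}_{\e}))$'' in \eqref{eq:spikingbalanced}--\eqref{eq:spikinglinfty} is understood as a function on $I$ via $x\mapsto(1-x)/2$; with that convention fixed the proof is essentially a one-line reduction and no genuine obstacle arises.
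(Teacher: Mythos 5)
Your argument is correct and is exactly the paper's intended route: the theorem is stated as a direct consequence of Propositions~\ref{prop:relubalanced} and~\ref{prop:correspspikingrelu}, with the SNN realization understood as a function of $x$ on $I$ via the fixed affine reparametrization $x\mapsto t_0$, so the error bounds transfer pointwise, depth is preserved, size does not increase, and the hidden-layer weight dependence is inherited from the ReLU NN. Your explicit handling of the reparametrization convention is a useful clarification of a point the paper leaves implicit.
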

\begin{remark}\label{rmk:ReLu2Spk}
The presently used reasoning to infer expression rate bounds
for spiking NN architectures from rates proved for ReLU NNs
naturally also applies to other results, e.g. those in 
\cite{OS2023,JOdiss,MOPS22_2877} and also for 
so-called \emph{operator networks} of strict ReLU type in 
\cite{MS23_2948}.
\end{remark} 

\section{$\tanh$ Neural Network Approximations}
\label{sec:tanhanal}

In Section \ref{sec:model},
we have seen in Remark \ref{rem:blexp}
that when the reaction coefficient function $b(x)$ is constant,
then the boundary layer functions are known explicitly
and are given by \eqref{eq:explicitbl}.
Particularly simple NN approximations of these boundary layer functions
can be obtained with NNs which have one hidden layer and use as activation function
\begin{align*}
\tanh(x) = \tfrac{1-\exp(-2x)}{1+\exp(-2x)},
\qquad x\in\R
.
\end{align*}
This is the topic of Section \ref{sec:nntanhexp},
and is of independent interest.
In Section \ref{sec:nntanhsingpert},
we state the principal 
result of this section:
exponential DNN expression rate bounds in Sobolev norms 
on the set of solutions to \eqref{eq:de}--\eqref{eq:bc}
which are uniform in the singular perturbation parameter $\e\in (0,1]$.
Based on expression rate bounds
in Sections \ref{sec:nncalculus}--\ref{sec:nntanhidprod} 
and Appendix \ref{sec:nntanhunivarcheb},
in Section \ref{sec:nntanhanal} 
we construct deep $\tanh$-activated NN approximations of analytic functions,
and in particular of the smooth term in \eqref{eq:explicitbl},
sharpening previous results in \cite{DLM2021}.

In Section \ref{sec:PrfMainRslt},
we prove the main result of Section \ref{sec:tanhanal}
by combining the $\tanh$-NN approximation of the smooth term
with a $\tanh$-NN approximation of the boundary layer components of the solution
$u^\e$ developed in Section \ref{sec:nntanhexp}.

Throughout Section \ref{sec:tanhanal},
we will use the convention that
for a function $F\in W^{1,\infty}(D)$ for a domain $D\subset\R^d$, $d\in\N$,
the $W^{1,\infty}(D)$-norm is defined as
$\norm[W^{1,\infty}(D)]{ F } 
	= \max\{ \norm[L^\infty(D)]{ F }, 
		\max_{j=1}^d \norm[L^\infty(D)]{ \tfrac{\partial}{\partial x_j} F }\}$.
\subsection{$\tanh$ Emulation of the Exponential Function}
\label{sec:nntanhexp}

We analyze $\tanh$ NN approximations of the exponential function
in Lemma \ref{lem:tanhnnexp} below,
based on the following observation.

\begin{lemma}
\label{lem:tanhexp}
For all $x_0\geq 0$ and all $x\geq 0$ there holds
\begin{align*}
E(x) := &\, \snormc{ \exp(-x) - \exp(x_0) \tfrac12( 1- \tanh( \tfrac12 x + \tfrac12 x_0 ) ) }
	\leq \exp( -x_0 )
	,
	\\
\snorm{ E'(x) } := &\, \snormc{ \tfrac{\partial E}{\partial x}(x) }
	\leq 2 \exp( -x_0 )
.
\end{align*}
\end{lemma}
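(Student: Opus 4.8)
The plan is to reduce both inequalities to elementary calculus after rewriting the $\tanh$-term in closed form. Starting from $\tanh(y) = (1 - e^{-2y})/(1 + e^{-2y})$ one gets $\tfrac12(1-\tanh(y)) = 1/(1+e^{2y})$; substituting $y = \tfrac12 x + \tfrac12 x_0$ and multiplying by $\exp(x_0)$ yields $\exp(x_0)\,\tfrac12\bigl(1-\tanh(\tfrac12 x + \tfrac12 x_0)\bigr) = 1/(e^{-x_0}+e^{x})$. Therefore $E(x) = \bigl|e^{-x} - (e^{-x_0}+e^{x})^{-1}\bigr|$, and bringing the two terms over a common denominator gives $e^{-x} - (e^{-x_0}+e^{x})^{-1} = e^{-x-x_0}/(e^{-x_0}+e^{x})$, which is nonnegative for $x,x_0\geq 0$. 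Hence the absolute value is immaterial and $E(x) = e^{-x-x_0}/(e^{-x_0}+e^{x})$ on the relevant range.

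The first bound then follows at once: for $x,x_0\geq 0$ we have $e^{-x_0}+e^{x}\geq e^{x}\geq 1$ and $e^{-x-x_0}\leq e^{-x_0}$, so $E(x)\leq e^{-x_0}$ (in fact even $E(x)\leq e^{-2x-x_0}$).

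For the derivative I would differentiate the closed form directly. The quotient rule applied to $E(x)=e^{-x-x_0}/(e^{-x_0}+e^{x})$ gives $E'(x) = -\,e^{-x-x_0}(2e^{x}+e^{-x_0})/(e^{-x_0}+e^{x})^{2}$. Using the crude estimate $2e^{x}+e^{-x_0}\leq 2(e^{x}+e^{-x_0})$ we obtain $\snorm{E'(x)}\leq 2e^{-x-x_0}/(e^{-x_0}+e^{x}) = 2E(x)\leq 2e^{-x_0}$, which is the second claim. (Alternatively one can differentiate the original expression using $\tfrac{\dd}{\dd y}\tanh(y)=1-\tanh^2(y)$ and then re-substitute the closed form, but this is longer.)

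Every step is routine; the only points needing a little care are recognizing the identity $\tfrac12(1-\tanh(y))=1/(1+e^{2y})$ and checking the sign of $e^{-x}-(e^{-x_0}+e^{x})^{-1}$ so that the modulus can be removed before differentiating. I do not anticipate any real obstacle beyond this bookkeeping.
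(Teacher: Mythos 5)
Your proposal is correct and follows essentially the same route as the paper's proof: rewrite the $\tanh$ term via exponentials, obtain the closed form $E(x)=e^{-x-x_0}/(e^{-x_0}+e^{x})=e^{-2x-x_0}/(1+e^{-x-x_0})$, bound it, and then differentiate that closed form. The only cosmetic difference is that you observe $\snorm{E'(x)}\le 2E(x)$ directly (a slightly cleaner final step), whereas the paper bounds the two terms of $E'$ separately; the substance is identical.
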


\begin{proof}
We start by noting that for all $x\in\R$
\begin{align*}
( 1 - \tanh( x ) )
	= &\, \tfrac{2\exp(-2x)}{1+\exp(-2x)}
	,
	\qquad
\tfrac12 ( 1 - \tanh( \tfrac12 x ) )
	= \tfrac{\exp(-x)}{1+\exp(-x)}
	,
	\\
\snormc{ \exp(-x) - \tfrac12 ( 1 - \tanh( \tfrac12 x ) ) }
	= &\, \snormc{ \exp(-x) - \tfrac{\exp(-x)}{1+\exp(-x)} }
	= \tfrac{1}{1+\exp(-x)} \exp(-2x)
.
\end{align*}
Using this result in the fourth step below, we obtain that
for all $x_0\geq 0$ and all $x\geq 0$
\begin{align*}
E(x) 	:= 
&\, \snormc{ \exp(-x) - \exp(x_0) \tfrac12 ( 1- \tanh( \tfrac12 x + \tfrac12 x_0 ) ) }
	\\
	= &\, \snormc{ \exp(-x) - \exp(x_0) \tfrac{\exp(-x-x_0)}{1+\exp(-x-x_0)} }
	\\
	= &\, \exp(x_0) \snormc{ \exp(-x-x_0) - \tfrac{\exp(-x-x_0)}{1+\exp(-x-x_0)} }
	\\
	= &\, \exp(x_0) \tfrac{1}{1+\exp(-x-x_0)} \exp(-2x-2x_0)
	\\
	= &\, \tfrac{1}{1+\exp(-x-x_0)} \exp(-2x-x_0)
	\\
	\leq &\, \exp(-2x-x_0) 
	\leq \exp(-x_0)
.
\end{align*}
In addition, we obtain 
\begin{align*}
E'(x) 
	= &\, \tfrac{1}{1+\exp(-x-x_0)} \cdot -2 \exp(-2x-x_0) 
	\\
	&\, + \tfrac{-1}{(1+\exp(-x-x_0) )^2} \cdot - \exp(-x-x_0) \exp(-2x-x_0)
	\\
	= &\, \Big( - \tfrac{2}{1+\exp(-x-x_0)} 
	+ \tfrac{1}{(1+\exp(-x-x_0) )^2} \exp(-x-x_0) \Big)
		\exp(-2x-x_0) 
	,
	\\
\snorm{ E'(x) }
	\leq &\, 2 \exp(-2x-x_0) 
	\leq 2 \exp(-x_0)
.
\end{align*}
We used that the absolute value of the negative term is larger than that of the positive term,
hence $\snorm{E'(x)}$ is bounded from above by the absolute value of the negative term.
\end{proof}

As a result, we have the following 
\emph{shallow $\tanh$ NN approximation rate bound
of the exponential function $\exp(-\cdot )$.}
This result is of independent interest, as 
exponential boundary layer functions appear
in a wide range of multivariate, singular perturbation 
problems (see e.g. \cite{chang2023singular} and the references there).
\begin{lemma}
\label{lem:tanhnnexp}
For all $\tau\in(0,1]$ there exists a $\tanh$ NN $\Phi^{\exp}_{\tau}$
such that for all $x\geq 0$ there holds
\begin{subequations}
\label{eq:tanhnnexperr}
\begin{align}
\label{eq:tanhnnexperri}
\snorm{ \exp(-x) - \realiz( \Phi^{\exp}_{\tau} )(x) }
	\leq &\, \exp(-\tau),
	\\
	\label{eq:tanhnnexperrii}
\snorm{ -\exp(-x) - \realiz( \Phi^{\exp}_{\tau} )'(x) }
	\leq &\, \exp(-\tau),
\end{align}
\end{subequations}
and such that 
$\depth( \Phi^{\exp}_{\tau} ) = 2$
and
$\size( \Phi^{\exp}_{\tau} ) = 4$.
\end{lemma}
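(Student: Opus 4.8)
The plan is to package the pointwise estimate from Lemma~\ref{lem:tanhexp} into the neural network formalism of Definition~\ref{def:NeuralNetworks}. The key observation is that the approximant appearing in Lemma~\ref{lem:tanhexp}, namely $x\mapsto \exp(x_0)\tfrac12(1-\tanh(\tfrac12 x + \tfrac12 x_0))$, is literally the realization of a $\tanh$ NN with one hidden layer: the input layer applies the affine map $x\mapsto \tfrac12 x + \tfrac12 x_0$ (one nonzero weight, one nonzero bias), the single hidden neuron applies $\tanh$, and the output layer applies the affine map $y\mapsto \exp(x_0)\tfrac12(1-y) = -\tfrac{\exp(x_0)}{2}y + \tfrac{\exp(x_0)}{2}$ (one nonzero weight, one nonzero bias). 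Counting nonzero entries in all weight matrices and bias vectors gives $\size = 1+1+1+1 = 4$, and there are two layers (one hidden, one output), so $\depth = 2$, matching the claimed bounds.

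Next I would choose the free parameter. Given $\tau\in(0,1]$, set $x_0 := \tau \geq 0$ and define $\Phi^{\exp}_{\tau}$ to be exactly the two-layer $\tanh$ NN just described with this value of $x_0$. Then for all $x\geq 0$, Lemma~\ref{lem:tanhexp} with this $x_0$ gives
\begin{align*}
\snorm{ \exp(-x) - \realiz(\Phi^{\exp}_{\tau})(x) }
  = E(x) \leq \exp(-x_0) = \exp(-\tau),
\end{align*}
which is \eqref{eq:tanhnnexperri}. For the derivative bound, note that $\realiz(\Phi^{\exp}_{\tau})'(x) = \tfrac{d}{dx}\big(\exp(x_0)\tfrac12(1-\tanh(\tfrac12 x+\tfrac12 x_0))\big)$, so
\begin{align*}
\snorm{ -\exp(-x) - \realiz(\Phi^{\exp}_{\tau})'(x) }
  = \snorm{ E'(x) } \leq 2\exp(-x_0) = 2\exp(-\tau).
\end{align*}
Here there is a small discrepancy: Lemma~\ref{lem:tanhexp} yields the factor $2$, whereas \eqref{eq:tanhnnexperrii} claims the bound $\exp(-\tau)$ without that factor. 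The clean fix is to sharpen the choice of $x_0$: take $x_0 := \tau + \log 2$ instead. Then $2\exp(-x_0) = \exp(-\tau)$, and simultaneously $\exp(-x_0) = \tfrac12\exp(-\tau) \leq \exp(-\tau)$, so both \eqref{eq:tanhnnexperri} and \eqref{eq:tanhnnexperrii} hold; the network still has exactly one hidden neuron, hence $\depth = 2$ and $\size = 4$ are unaffected since shifting $x_0$ only changes the numerical values of the (still nonzero) weights and biases.

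The only genuine subtlety — and the step I would be most careful about — is this bookkeeping of which constant to use in the derivative estimate, since a naive choice $x_0=\tau$ gives a factor-$2$ loss that would technically violate the stated inequality. Everything else is routine: identifying the closed-form approximant as a shallow $\tanh$ network, reading off depth and size from the weight/bias sparsity pattern, and invoking Lemma~\ref{lem:tanhexp} verbatim. No approximation-theoretic machinery (polynomial emulation, Chebyshev expansions, depth-accuracy tradeoffs) is needed here, in contrast to the deeper constructions of Section~\ref{sec:nntanhanal}; the exponential boundary layer is handled exactly in one hidden layer, which is precisely the point emphasized in the abstract about $\tanh$ activations representing ``exponential boundary layer solution features'' explicitly.
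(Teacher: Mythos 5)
Your proof is internally consistent and does prove the lemma \emph{as literally written}, but you should be suspicious of your own result: for $\tau\in(0,1]$ the claimed bound $\exp(-\tau)$ lies in $[e^{-1},1)$, so it is never small; an ``approximation to accuracy $\exp(-\tau)$'' of that kind would be useless. This is in fact a typo in the paper — the intended bound is $\tau$, not $\exp(-\tau)$. Two pieces of internal evidence: the paper's own proof takes $x_0=\log(2/\tau)$ (so $\exp(x_0)=2/\tau$) and, invoking Lemma~\ref{lem:tanhexp}, obtains $\snorm{\exp(-x)-\realiz(\Phi^{\exp}_\tau)(x)}\le\exp(-x_0)=\tau/2$ and $\snorm{-\exp(-x)-\realiz(\Phi^{\exp}_\tau)'(x)}\le 2\exp(-x_0)=\tau$; and in the proof of Theorem~\ref{thm:nntanhsingpert} these bounds are used as $\le \tau$ (with $\tau=\exp(-\beta p)\,\e$), which would not follow from $\exp(-\tau)$.

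Your alternative choice $x_0=\tau+\log 2$ therefore substitutes a construction whose error is bounded below by $\tfrac12\exp(-\tau)\ge \tfrac12 e^{-1}$ uniformly over $\tau\in(0,1]$, i.e.\ the accuracy does not improve as $\tau\to 0$. The ``factor-of-$2$ discrepancy'' you correctly spotted should instead be read as evidence that the target bound is $\tau$: with $x_0=\log(2/\tau)$ the factor $2$ coming from $\snorm{E'(x)}\le 2\exp(-x_0)$ is exactly absorbed, giving $2\exp(-x_0)=\tau$, no slack wasted. Everything else in your write-up — reading off that the approximant is a one-hidden-neuron $\tanh$ network with depth $2$ and size $4$, and invoking Lemma~\ref{lem:tanhexp} — matches the paper. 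The one substantive gap is the choice of $x_0$: it must scale like $\log(1/\tau)$ so that the error is $O(\tau)$, not be $O(1)$.
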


\begin{proof}
We set $x_0 = \log(2/\tau)$, 
such that $\exp(x_0) = \tfrac{2}{\tau}$,
and define the $\tanh$ NN
\begin{align*}
\Phi^{\exp}_{\tau} 
	:= &\, \left( \left( \tfrac12, \tfrac12 x_0 \right), 
		\left( - \tfrac12 \exp(x_0), \tfrac12 \exp(x_0) \right) \right)
	\\
	= &\, \left( \left( \tfrac12, \tfrac12 \log(2/\tau) \right), 
		\left( - \tfrac1\tau, \tfrac1\tau \right) \right)
,
\end{align*}
which implies that
\begin{align*}
\realiz( \Phi^{\exp}_{\tau} )(x) 
	= \exp(x_0) \tfrac12 ( 1 - \tanh( \tfrac12 x + \tfrac12 x_0 ) )
,
\qquad
\text{ for all }x\geq0
.
\end{align*}
Using Lemma \ref{lem:tanhexp},
we obtain the error bounds
\begin{align*}
\snorm{ \exp(-x) - \realiz( \Phi^{\exp}_{\tau} )(x) }
	\leq &\, \exp(-x_0)
	= \tfrac{\tau}{2}
,
\qquad
\text{ for all }x\geq0
,
	\\
\snorm{ -\exp(-x) - \realiz( \Phi^{\exp}_{\tau} )'(x) }
	\leq &\, 2 \exp(-x_0)
	= \tau
,
\qquad
\text{ for all }x\geq0
.
\end{align*}
From the definition of $\Phi^{\exp}_{\tau}$, we observe that
$\depth( \Phi^{\exp}_{\tau} ) = 2$
and
$\size( \Phi^{\exp}_{\tau} ) = 4$.
\end{proof}

\begin{remark}
\label{rem:sigmoidbl}
We can apply the above analysis 
also to the sigmoid activation
$$
\sigma(x) = \tfrac{\exp(x)}{1 + \exp(x)} = \tfrac{1}{1 + \exp(-x)},
\qquad
x\in\R.
$$
We observe that for all $x\in\R$ holds
$\sigma(-x) = \tfrac{\exp(-x)}{1+\exp(-x)} = \tfrac12 ( 1 - \tanh( \tfrac12 x ) )$.
Thus, the $\sigma$-NN 
\[
\Phi^{\sigma,\exp}_{\tau} 
	:= \big( \big( {-1,- x_0} \big), \big( \exp(x_0), 0 \big) \big)
	\\
	= \big( \big({-1,-\log(2/\tau)} \big), \big( \tfrac{2}{\tau}, 0 \big) \big)
\]
satisfies 
\[
\depth( \Phi^{\sigma,\exp}_{\tau} ) = 2,\;\;
\size( \Phi^{\sigma,\exp}_{\tau} ) = 3,\;\;\mbox{and}\;\;
\realiz( \Phi^{\sigma,\exp}_{\tau} ) 
	= \exp(x_0) \sigma(-\cdot-x_0)
	= \realiz( \Phi^{\exp}_{\tau} ).
\]
Here $\Phi^{\exp}_{\tau}$ is the $\tanh$ NN from Lemma \ref{lem:tanhnnexp}.
In particular, $\Phi^{\sigma,\exp}_{\tau}$ 
satisfies \eqref{eq:tanhnnexperr}.
\end{remark}

\subsection{$\tanh$ NN Solution Approximation}
\label{sec:nntanhsingpert}
We 
state the main
result of this section, namely
\emph{$\e$-robust approximation rates 
for strict $\tanh$-activated deep NN approximations 
of solution families $\{ u^\e: 0<\e\leq 1\}\subset H^1_0(I)$
to the singularly perturbed, reaction-diffusion 
BVP \eqref{eq:de}--\eqref{eq:bc}.}
To leverage the Lemma~\ref{lem:tanhnnexp}, 
we consider again \eqref{eq:de}--\eqref{eq:bc} in 
the special case that 
the reaction coefficient $b(x)$ is constant, 
and equals $b\in\R$.
Without loss of generality, we assume that $b=1$. 
The solutions for general $b>0$ can be found
by solving the BVP with 
$\e^2$ replaced by $\e^2/b$ and 
$f$ replaced by $f/b$.
\begin{theorem}
\label{thm:nntanhsingpert}
Assume that (\ref{eq:analyticfb}) holds
and that the reaction coefficient function 
is constant and satisfies $b(x) = 1$ for all $x\in I = (-1,1)$. 
For $\e\in(0,1]$, 
let $u_{\e}$ be the solution of (\ref{eq:variational}).

Then, for all
$p\in\N$
there exists a $\tanh$ NN
$\Phi^{u_{\e},p}_{\e}$
such that, 
with positive constants $C$, $\beta$, 
independent of $\e\in (0,1]$ and of $p\geq 1$,
it holds that
$
\|u_{\e} - \realiz(\Phi^{u_{\e},p}_{\e}) \|_{W^{1,\infty}(I)}
\leq C e^{-\beta p}
$,
which implies that
\begin{align}
\label{eq:tanhbalanced}
\left\{ \e^{1/2} \|u_{\e}' - \realiz(\Phi^{u_{\e},p}_{\e})'\|_{L^2(I)} 
	+ \|u_{\e} - \realiz(\Phi^{u_{\e},p}_{\e}) \|_{L^2(I)} \right\} 
\leq &\, C e^{-\beta p},
\\
\|u_{\e} - \realiz(\Phi^{u_{\e},p}_{\e}) \|_{L^\infty(I)}
\leq &\, C e^{-\beta p}
.
\label{eq:tanhlinfty}
\end{align}

For a constant $\tilde{C} > 0$ independent of $f$, $p$, $\e$, $C$ and $\beta$,
the network depth and size are bounded as follows: 
\begin{align}
\label{eq:tanhbldepthsize}
\depth( \Phi^{u_{\e},p}_{\e} )
	= &\, \ceil{ \log_2(p) }+1
	,
	\qquad
\size( \Phi^{u_{\e},p}_{\e} )
	\leq
		\tilde{C} p
. 
\end{align}
The weights and biases in the hidden layers are independent of $u_{\e}$
and depend only on $p$, $\e$ and $\beta$.
\end{theorem}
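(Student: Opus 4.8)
The plan is to use the constant-coefficient decomposition from Remark~\ref{rem:blexp}, namely $u_\e = u^S_\e + \tilde u^+_\e + \tilde u^-_\e + u^R_\e$ with $\tilde u^\pm_\e(x) = C^\pm e^{-\sqrt b(1\mp x)/\e}$ and here $b=1$, and to emulate each of the three relevant pieces ($u^S_\e$, $\tilde u^+_\e$, $\tilde u^-_\e$) separately by a $\tanh$ NN, summing them in parallel. The remainder $u^R_\e$ satisfies $\|u^R_\e\|_{W^{1,\infty}(I)} \leq C\e^{1-n}e^{-K_3/\e} \leq Ce^{-K_3/(2\e)}$ for $n=0,1$ (using $\e^{-1}e^{-K_3/\e}$ bounded), which we absorb into the target accuracy without emulating it, since it is already exponentially small in $1/\e$; for the regime where $\e$ is not small this still holds since $e^{-K_3/\e}$ is then bounded and we may alternatively fall back on the classical regularity bound. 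Wait --- more carefully: we should just note $\|u^R_\e\|_{W^{1,\infty}} \lesssim e^{-c/\e}$ which we want $\leq Ce^{-\beta p}$; this fails when $\e$ is of order $1$ and $p$ large, so instead we must also emulate $u^R_\e$ or, better, we emulate the \emph{FEM solution / $hp$ approximant} whose error is controlled uniformly. Actually the cleanest route: emulate $\tilde u^\pm_\e$ exactly via Lemma~\ref{lem:tanhnnexp}, and emulate the \emph{entire smooth remainder} $w_\e := u_\e - \tilde u^+_\e - \tilde u^-_\e = u^S_\e + u^R_\e$, which by \eqref{eq:smooth}, \eqref{eq:rem} is analytic with $\|w_\e^{(n)}\|_{L^2(I)} \leq C_1K_1^n n! + C_3\e^{2-n}e^{-K_3/\e} \leq \tilde C \tilde K^n n!$ uniformly in $\e$ (since $\e^{2-n}e^{-K_3/\e} \leq C(K_3) (n/K_3)^{n}$ ... or simply $\e^{-n}e^{-K_3/\e} \leq \sup_{t>0} t^n e^{-K_3 t} = (n/(eK_3))^n \leq n!(1/(eK_3))^n$ by Stirling). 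So $w_\e$ is analytic with $\e$-uniform analyticity constants, hence emulable by a deep $\tanh$ NN via the results in Section~\ref{sec:nntanhanal} at exponential rate.

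Concretely, the key steps are: (1) Write $u_\e = w_\e + \tilde u^+_\e + \tilde u^-_\e$ with $w_\e = u^S_\e + u^R_\e$ and establish the $\e$-uniform analytic bound $\|w_\e^{(n)}\|_{L^\infty(I)} \leq \tilde C_1 \tilde K_1^n n!$ for all $n$, combining \eqref{eq:smooth}, \eqref{eq:rem}, the Sobolev embedding / interpolation inequality used in Lemma~\ref{lem:normsue}, and Stirling's estimate to handle the $\e^{-n}e^{-K_3/\e}$ factor. (2) Apply the deep $\tanh$ NN emulation result for analytic functions on $[-1,1]$ (established in Section~\ref{sec:nntanhanal}, sharpening \cite{DLM2021}) to obtain a $\tanh$ NN $\Phi^{w_\e}_{p}$ of depth $O(\log p)$ --- matching $\lceil\log_2 p\rceil + 1$ up to the Chebyshev-emulation architecture --- and size $O(p)$ with $\|w_\e - \realiz(\Phi^{w_\e}_p)\|_{W^{1,\infty}(I)} \leq C e^{-\beta p}$. (3) For each boundary layer, write $\tilde u^\pm_\e(x) = C^\pm e^{-(1\mp x)/\e}$ and apply Lemma~\ref{lem:tanhnnexp} with argument $(1\mp x)/\e \geq 0$ and tolerance $\tau = \beta' p$; after the affine input map $x \mapsto (1\mp x)/\e$ this gives a depth-$2$, size-$4$ $\tanh$ NN with $W^{1,\infty}(I)$-error $\leq |C^\pm|(1+1/\e)e^{-\beta' p}$ --- and here I must be careful with the $1/\e$ from the chain rule on the derivative: the derivative error from Lemma~\ref{lem:tanhnnexp} is $\leq e^{-\tau}$ for the $\exp(-\cdot)$ approximation, so $\tfrac{d}{dx}$ of the scaled version picks up $1/\e$, giving error $\leq |C^\pm|\e^{-1}e^{-\beta' p}$; to kill the $\e^{-1}$ we choose $\tau = \beta' p$ with room to spare, but $\e^{-1}$ is unbounded, so I instead choose $x_0$ in Lemma~\ref{lem:tanhexp} to depend also on $\e$: take $x_0 = \beta' p + |\log\e|$ so that $\e^{-1}e^{-x_0} = e^{-\beta' p}$; this only changes a bias weight, not the size. (4) Assemble the parallelization: $\realiz(\Phi^{u_\e,p}_\e) = \realiz(\Phi^{w_\e}_p) + \realiz(\Phi^{+}) + \realiz(\Phi^{-})$, which costs $O(1)$ additional size and does not increase depth beyond $\max$ (padding the shallow BL networks to depth $\lceil\log_2 p\rceil+1$ with $\tanh$-identity layers from Section~\ref{sec:nncalculus} at $O(p)$ extra size, or noting depth-$2$ BL nets can be emulated inside the deeper architecture). (5) Sum the three $W^{1,\infty}$ errors by triangle inequality to get $\|u_\e - \realiz(\Phi^{u_\e,p}_\e)\|_{W^{1,\infty}(I)} \leq Ce^{-\beta p}$, then deduce \eqref{eq:tanhbalanced}--\eqref{eq:tanhlinfty} by $\|g\|_{L^2(I)} \leq \sqrt 2\|g\|_{L^\infty(I)}$ and $\e^{1/2}\|g'\|_{L^2(I)} \leq \e^{1/2}\sqrt2\|g'\|_{L^\infty(I)} \leq \sqrt2\|g\|_{W^{1,\infty}(I)}$ since $\e\leq 1$.

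I expect the main obstacle to be Step~(1): verifying that $w_\e = u^S_\e + u^R_\e$ genuinely has $\e$-\emph{uniform} analyticity constants in $L^\infty$, not just $L^2$. The $L^2$ bounds are immediate from \eqref{eq:smooth}--\eqref{eq:rem} plus the Stirling trick $\e^{-n}e^{-K_3/\e} \leq (n!)\,(eK_3)^{-n}$; upgrading to $L^\infty$ derivative bounds of the same analytic type requires either the Sobolev interpolation inequality applied scale-by-scale (as in Lemma~\ref{lem:normsue}, but now for every derivative order $n$, which costs a benign extra geometric factor) or invoking that $u^S_\e$ and $u^R_\e$ individually extend holomorphically to a fixed complex neighbourhood of $[-1,1]$ with $\e$-uniform bounds --- the latter is the more robust formulation and is essentially already contained in \cite{Melenk1997}. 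A secondary, more bookkeeping-type obstacle is matching the depth \emph{exactly} to $\lceil\log_2 p\rceil+1$ rather than merely $O(\log p)$: this forces the analytic-function emulator to be built from the Chebyshev-partial-sum construction of Appendix~\ref{sec:nntanhunivarcheb} (whose depth is precisely $\lceil\log_2 p\rceil+1$) and requires the boundary-layer subnetworks to be folded into the same depth budget without inflating the size beyond $O(p)$, which is handled by the $\tanh$-NN parallelization and (approximate-)identity calculus of Section~\ref{sec:nncalculus}.
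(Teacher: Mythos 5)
Your overall architecture (decompose into a smooth part plus two explicit exponential boundary layers, emulate the layers by the depth-$2$ $\tanh$ exponential network of Lemma~\ref{lem:tanhnnexp} with $x_0$ shifted by $|\log\e|$ to absorb the chain-rule factor $\e^{-1}$, emulate the smooth part by the \Cheb-sum network of Corollary~\ref{cor:chebsums}, and pad the boundary-layer subnetworks with identity layers to match depth $\ceil{\log_2 p}+1$) is essentially the paper's construction in the pre-asymptotic regime. But your proposed treatment of the remainder $u^R_\e$ contains a genuine gap.

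You correctly observe that one cannot simply discard $u^R_\e$, since $\|u^R_\e\|_{W^{1,\infty}(I)}\lesssim e^{-c/\e}$ is not small in $p$ when $\e$ is bounded away from zero. Your proposed fix — to set $w_\e:=u^S_\e+u^R_\e$ and argue that this is analytic with $\e$-\emph{uniform} constants — does not follow from the estimates you cite. Inequality~\eqref{eq:rem} bounds $\|(u^R_\e)^{(n)}\|_{L^2(I)}$ only for $n\in\{0,1,2\}$; no bound for $n\geq 3$ is stated anywhere in the paper, and in particular the chain $\|w_\e^{(n)}\|_{L^2(I)}\leq C_1K_1^n n!+C_3\e^{2-n}e^{-K_3/\e}\leq\tilde C\tilde K^n n!$ ``for all $n$'' is not available from \eqref{eq:smooth}--\eqref{eq:rem}. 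Your Stirling computation is fine, but it is applied to a bound that does not exist for $n\geq 3$. Establishing such a bound would require going back into \cite{Melenk1997} and showing that $u^R_\e$ solves a boundary value problem of the same type with exponentially small data, then re-applying Theorem~\ref{thm:analregul}; you gesture at this (``essentially already contained in \cite{Melenk1997}''), but it is an uncompleted non-trivial step, not bookkeeping.

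The paper avoids this altogether by a case distinction on whether $\kappa p\e\geq\tfrac12$ or $\kappa p\e<\tfrac12$, inherited from the proof of Proposition~\ref{prop:Melenk1997}. In the asymptotic regime $\kappa p\e\geq\tfrac12$, the whole solution $u_\e$ is directly approximated by a single polynomial of degree $p$ to accuracy $Ce^{-\beta p}$ in $W^{1,\infty}(I)$ (first part of the proof of \cite[Theorem~16]{Melenk1997}), after which one emulates that polynomial by Corollary~\ref{cor:chebsums}; no decomposition is needed. In the pre-asymptotic regime $\kappa p\e<\tfrac12$, one has $\e^{-1}>2\kappa p$, so $\|u^R_\e\|_{W^{1,\infty}(I)}\lesssim e^{-K_3/\e}\leq e^{-2\kappa K_3 p}$ is in fact exponentially small \emph{in $p$} and may simply be dropped, which is exactly the naive step you abandoned. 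In other words, the regime you were worrying about ($\e$ of order one and $p$ large) falls into the asymptotic case, where the decomposition is never invoked. This case split is the missing piece: it makes the discard of $u^R_\e$ legitimate precisely where it is used, and replaces your unavailable ``uniform analyticity of $u^S_\e+u^R_\e$'' with the readily available polynomial approximability of $u_\e$ when $\kappa p\e\geq\tfrac12$.

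Two smaller remarks. First, the upgrade from $L^2$ to $L^\infty$ bounds that you flag as the ``main obstacle'' is a secondary issue compared to the missing high-order bound on $u^R_\e$; the paper never needs $L^\infty$ analytic bounds for $u^R_\e$ at all. Second, your step~(5) (deducing \eqref{eq:tanhbalanced}--\eqref{eq:tanhlinfty} from the $W^{1,\infty}$ bound via $\|g\|_{L^2}\leq\sqrt2\|g\|_{L^\infty}$ and $\e\leq1$) is correct and matches the paper's logic.
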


A result corresponding to this theorem also holds for sigmoid-activated NNs.
We show this in Appendix \ref{sec:GenAct}.

The rest of this section is devoted to the proof of Theorem~\ref{thm:nntanhsingpert}.
In Section~\ref{sec:nncalculus}, we review results (in principle known) 
on a calculus of $\tanh$-activated deep NNs, in particular in 
Section~\ref{sec:nntanhidprod} $\tanh$-activated deep NN emulation
of the identity and of products of real numbers.
Section~\ref{sec:nntanhanal} addresses the $\tanh$-emulation of 
analytic functions, which are based on the novel 
emulations bounds of \Cheb polynomials by deep $\tanh$-activated
NNs, which are proved in Appendix~\ref{sec:nntanhunivarcheb}.
\subsection{Calculus of NNs}
\label{sec:nncalculus}
In the following sections, 
we will construct NNs from smaller networks
using a \emph{calculus of NNs},
which we now recall from \cite{PV2018}.
The results cited from \cite{PV2018} were derived for
NNs which only use the ReLU activation function, but they also hold
for networks with different activation functions without modification.
\begin{proposition}[Parallelization of NNs {{\cite[Definition 2.7]{PV2018}}}]
  \label{prop:parallel}
  For $d,L\in\N$ let $\Phi^1 = $ 
  $ \left
    ((A^{(1)}_1,b^{(1)}_1),\ldots,(A^{(1)}_L,b^{(1)}_L)\right
  ) $ and
  $\Phi^2 = \Big( (A^{(2)}_1,b^{(2)}_1),\ldots,$ \linebreak
  $(A^{(2)}_L,b^{(2)}_L)\Big) $ be two NNs with input dimension $d$ and depth $L$.  Let the
  \emph{parallelization} $\Parallel{\Phi^1,\Phi^2}$ of $\Phi^1$ and
  $\Phi^2$ be defined by
  \begin{align*}
    \Parallel{\Phi^1,\Phi^2} := &\, ((A_1,b_1),\ldots,(A_L,b_L)),
    &&
    \\
    A_1 = &\, \begin{pmatrix} A^{(1)}_1 \\ A^{(2)}_1 \end{pmatrix},
    \quad
    A_\ell = \begin{pmatrix} A^{(1)}_\ell & 0\\0&A^{(2)}_\ell \end{pmatrix},
    &&
       \text{ for } \ell = 2,\ldots L,
    \\
    b_\ell = &\, \begin{pmatrix} b^{(1)}_\ell \\ b^{(2)}_\ell \end{pmatrix},
    &&
       \text{ for } \ell = 1,\ldots L.
  \end{align*}

  Then,
  \begin{align*}
    \realiz(\Parallel{\Phi^1,\Phi^2}) (x)
    = &\, ( \realiz( \Phi^1 )(x), \realiz( \Phi^2 )(x) ),
        \quad
        \text{ for all } x\in\R^d,
    \\
    \depth(\Parallel{\Phi^1,\Phi^2}) = L, &
    \qquad
    \size(\Parallel{\Phi^1,\Phi^2}) = \size(\Phi^1) + \size(\Phi^2)
	.
  \end{align*}

\end{proposition}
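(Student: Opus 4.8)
The plan is to read off all three assertions directly from Definition~\ref{def:NeuralNetworks}, with the realization formula being the only part that needs a (one-line) induction. Throughout, fix the activation function $\varrho$ used to evaluate all three networks; since realizations are defined layer by layer in exactly the same way for every $\varrho$, no special property of $\varrho$ will be used.

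First I would check that the block matrices in the statement make sense. Writing $N^{(i)}_\ell$ for the layer dimensions of $\Phi^i$, one has $A^{(i)}_1\in\R^{N^{(i)}_1\times d}$ and $A^{(i)}_\ell\in\R^{N^{(i)}_\ell\times N^{(i)}_{\ell-1}}$ for $\ell=2,\ldots,L$, because $\Phi^1$ and $\Phi^2$ share the input dimension $d$ and the depth $L$. Hence $A_1$ (the vertical stack) has exactly $d$ columns, and $A_\ell$ for $\ell\geq 2$ is genuinely block-diagonal; correspondingly $\Parallel{\Phi^1,\Phi^2}$ has layer dimensions $N_\ell = N^{(1)}_\ell+N^{(2)}_\ell$. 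The tuple defining $\Parallel{\Phi^1,\Phi^2}$ has $L$ entries, so $\depth(\Parallel{\Phi^1,\Phi^2})=L$ is immediate.

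For the realization, denote by $x_\ell$, $x^{(1)}_\ell$, $x^{(2)}_\ell$ the intermediate vectors from Definition~\ref{def:NeuralNetworks} produced by $\Parallel{\Phi^1,\Phi^2}$, $\Phi^1$ and $\Phi^2$ on the common input $x\in\R^d$. I would prove $x_\ell = (x^{(1)}_\ell, x^{(2)}_\ell)$ for $\ell=0,\ldots,L$ by induction on $\ell$. For $\ell=0$ all three vectors equal $x$. For the step with $1\leq\ell\leq L-1$, the stacked/block form of $A_\ell,b_\ell$ together with the inductive hypothesis gives $A_\ell x_{\ell-1}+b_\ell = (A^{(1)}_\ell x^{(1)}_{\ell-1}+b^{(1)}_\ell,\, A^{(2)}_\ell x^{(2)}_{\ell-1}+b^{(2)}_\ell)$ (for $\ell=1$ this uses that $A_1$ is a vertical stack, since the input is shared rather than concatenated); applying $\varrho$ componentwise commutes with the concatenation, so $x_\ell = (x^{(1)}_\ell, x^{(2)}_\ell)$. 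The identical computation without the activation handles $\ell=L$, giving $x_L = (\realiz(\Phi^1)(x), \realiz(\Phi^2)(x))$, which is the claimed formula.

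Finally I would count nonzeros: stacking $A^{(1)}_1$ over $A^{(2)}_1$, concatenating the biases, and padding with zero off-diagonal blocks for $\ell\geq 2$ all preserve the total number of nonzero entries, so $\norm[0]{A_\ell}=\norm[0]{A^{(1)}_\ell}+\norm[0]{A^{(2)}_\ell}$ and $\norm[0]{b_\ell}=\norm[0]{b^{(1)}_\ell}+\norm[0]{b^{(2)}_\ell}$ for every $\ell$; summing over $\ell=1,\ldots,L$ yields $\size(\Parallel{\Phi^1,\Phi^2})=\size(\Phi^1)+\size(\Phi^2)$. There is no genuine obstacle here — the statement is elementary — so the only things to watch are the layer-dimension bookkeeping and the fact that the first layer is a vertical stack while the later layers are block-diagonal.
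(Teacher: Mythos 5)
Your proof is correct and follows the same elementary bookkeeping that underlies the cited reference \cite[Definition 2.7]{PV2018}; the paper itself states this proposition without proof, as a recalled fact. The layer-by-layer induction for the realization, the immediate depth count, and the observation that vertical stacking, block-diagonal padding, and bias concatenation each preserve the number of nonzeros are exactly the right steps, and you correctly flagged the one subtlety — that the first layer is a vertical stack (shared input) while subsequent layers are block-diagonal.
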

The parallelization of more than two NNs is handled by repeated
application of Proposition \ref{prop:parallel}.

Similarly, we can also construct a NN 
emulating the sum of the realizations of two NNs.
\begin{proposition}[Sum of NNs]
  \label{prop:sum}

  For $d,N,L\in \N$ let
  $ \Phi^1 = \Big((A^{(1)}_1,b^{(1)}_1),\ldots,$ \linebreak
    $(A^{(1)}_L,b^{(1)}_L)\Big) $ and
  $\Phi^2 = \Big
  ((A^{(2)}_1,b^{(2)}_1),\ldots,(A^{(2)}_L,b^{(2)}_L)\Big)
  $
  be two NNs with input dimension $d$, output dimension $ N $ and
  depth $L$.  Let the \emph{sum} $\Phi^1+\Phi^2$ of $\Phi^1$ and
  $\Phi^2$ be defined by
  \begin{align*}
    \Phi^1 + \Phi^2 := &\, ((A_1,b_1),\ldots,(A_L,b_L)),
    &&
    \\
    A_1 =& \begin{pmatrix} A^{(1)}_1 \\ A^{(2)}_1 \end{pmatrix},
    \quad
    b_1 = \begin{pmatrix} b^{(1)}_1 \\ b^{(2)}_1 \end{pmatrix},
    \\
    A_\ell =& \begin{pmatrix} A^{(1)}_\ell & 0\\0&A^{(2)}_\ell \end{pmatrix},
                                                   \quad
                                                   b_\ell =  \begin{pmatrix} b^{(1)}_\ell \\ b^{(2)}_\ell \end{pmatrix},
                       &&
                          \text{ for } \ell = 2,\ldots L-1.
    \\
    A_L =& \begin{pmatrix} A^{(1)}_L & A^{(2)}_L \end{pmatrix},
                                       \quad
                                       b_L = b^{(1)}_L + b^{(2)}_L
.
  \end{align*}

  Then,
  \begin{align*}
    \realiz(\Phi^1+\Phi^2) (x)
    = &\, \realiz( \Phi^1 )(x) + \realiz( \Phi^2 )(x),
        \quad
        \text{ for all } x\in\R^d,
    \\
    \depth(\Phi^1+\Phi^2) = &\, L, \qquad
                              \size(\Phi^1+\Phi^2)
                              \leq \size(\Phi^1) + \size(\Phi^2)
                                                                .
  \end{align*}
\end{proposition}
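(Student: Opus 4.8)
The plan is to follow the template of the parallelization construction (Proposition~\ref{prop:parallel}): propagate $\Phi^1$ and $\Phi^2$ side by side through the hidden layers by means of the block-diagonal weight matrices and stacked bias vectors, and merge their outputs additively only in the final layer, where $A^{(1)}_L$ and $A^{(2)}_L$ are written next to each other and the output biases are summed. Since the definition of the realization in Definition~\ref{def:NeuralNetworks} applies verbatim to any activation $\varrho$, the argument is activation-agnostic, exactly as for Proposition~\ref{prop:parallel}.

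First I would fix an input $x\in\R^d$ and denote by $x^{(i)}_0,\dots,x^{(i)}_L$ the intermediate activations generated when evaluating $\Phi^i$ at $x$, for $i=1,2$, and by $x_0,\dots,x_L$ those of $\Phi^1+\Phi^2$. I would then show by induction on $\ell$ that $x_\ell=(x^{(1)}_\ell,x^{(2)}_\ell)$ for $\ell=0,\dots,L-1$. The base case $\ell=0$ is trivial, as all three networks are evaluated at the same $x$. For the inductive step with $1\le\ell\le L-1$, the stacked form of $A_1,b_1$ (when $\ell=1$) or the block-diagonal form of $A_\ell$ together with the stacked $b_\ell$ (when $\ell\ge 2$) gives $A_\ell x_{\ell-1}+b_\ell=(A^{(1)}_\ell x^{(1)}_{\ell-1}+b^{(1)}_\ell,\ A^{(2)}_\ell x^{(2)}_{\ell-1}+b^{(2)}_\ell)$, and componentwise application of $\varrho$ yields $x_\ell=(x^{(1)}_\ell,x^{(2)}_\ell)$. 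Applying this at $\ell=L-1$ and then the affine output rule, $x_L=A_Lx_{L-1}+b_L=A^{(1)}_Lx^{(1)}_{L-1}+A^{(2)}_Lx^{(2)}_{L-1}+b^{(1)}_L+b^{(2)}_L=\realiz(\Phi^1)(x)+\realiz(\Phi^2)(x)$, which is the asserted identity. The depth claim is immediate: $\Phi^1+\Phi^2$ is a list of $L$ weight-bias tuples. For the size bound I would count nonzeros layer by layer, using $\norm[0]{A_\ell}=\norm[0]{A^{(1)}_\ell}+\norm[0]{A^{(2)}_\ell}$ for all $\ell$ (stacked for $\ell=1$, block-diagonal for $2\le\ell\le L-1$, horizontal concatenation for $\ell=L$), $\norm[0]{b_\ell}=\norm[0]{b^{(1)}_\ell}+\norm[0]{b^{(2)}_\ell}$ for $\ell\le L-1$, and $\norm[0]{b_L}=\norm[0]{b^{(1)}_L+b^{(2)}_L}\le\norm[0]{b^{(1)}_L}+\norm[0]{b^{(2)}_L}$; summing gives $\size(\Phi^1+\Phi^2)\le\size(\Phi^1)+\size(\Phi^2)$.

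There is no genuine obstacle here; this is a bookkeeping argument. The only point worth flagging is that, unlike the equality in Proposition~\ref{prop:parallel}, the size bound is only an inequality, and this is unavoidable: the output biases are \emph{added}, not stacked, so cancellations in $b^{(1)}_L+b^{(2)}_L$ can strictly reduce the nonzero count. The minor care needed is the treatment of the two boundary indices $\ell=1$ and $\ell=L$ in the induction, which is the sole place where the construction deviates from that of the parallelization.
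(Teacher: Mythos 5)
Your proof is correct; the paper states Proposition~\ref{prop:sum} without an explicit proof (as it does the analogous Proposition~\ref{prop:parallel}), implicitly treating it as routine bookkeeping, and your layer-by-layer induction supplies exactly the needed argument, including the correct observation that the final size bound is an inequality only because the output biases are summed rather than stacked. One small slip: the induction claim ``$x_\ell=(x^{(1)}_\ell,x^{(2)}_\ell)$'' cannot hold at $\ell=0$, since $x_0=x\in\R^d$ whereas $(x^{(1)}_0,x^{(2)}_0)=(x,x)\in\R^{2d}$; the induction should be anchored at $\ell=1$, where the stacked form of $(A_1,b_1)$ produces $x_1=(x^{(1)}_1,x^{(2)}_1)$ directly from $x_0=x$ --- which is in fact exactly what your inductive step for $\ell=1$ does, so the argument goes through once the base case is restated.
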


We will sometimes use the parallelization of networks
which do not have the same inputs.
\begin{proposition}[Full parallelization of NNs {{\cite[Setting 5.2]{EGJS2021}}}] 
\label{prop:parallSep}
For $L \in \N$ let
$ \Phi^1 = \Big(
	(A^{(1)}_1,b^{(1)}_1),\ldots,(A^{(1)}_L,b^{(1)}_L)
    \Big)$ 
and
$\Phi^2 = \Big(
	(A^{(2)}_1,b^{(2)}_1),$ \linebreak $\ldots,(A^{(2)}_L,b^{(2)}_L)
	\Big)$
be two NNs with the same depth $L$,
with input dimensions $N^1_0=d_1$ and $N^2_0=d_2$, respectively. 

Then, the NN defined by
\begin{align*}
\FParallel{\Phi^1,\Phi^2} := &\, ((A_1,b_1),\ldots,(A_L,b_L)),
\end{align*}
\begin{align*}
    A_\ell = &\, \begin{pmatrix} A^{(1)}_\ell & 0\\0&A^{(2)}_\ell \end{pmatrix},
	\qquad
    b_\ell = \begin{pmatrix} b^{(1)}_\ell \\ b^{(2)}_\ell \end{pmatrix},
	\qquad
	\text{ for } \ell = 1,\ldots L
	,
\end{align*}
with $d = d_1+d_2$-dimensional input and depth $L$, 
called \emph{full parallelization of $\Phi^1$ and $\Phi^2$}, 
satisfies that 
for all $x = (x_1,x_2) \in \R^d$ with $x_i \in \R^{d_i}, i = 1,2$ 
\begin{align*} 
\realiz(\FParallel{\Phi^1,\Phi^2}) (x_1,x_2) 
	= &\, \left(\realiz(\Phi^1)(x_1), \realiz(\Phi^2)(x_2)\right)
\end{align*}
and
$\size(\FParallel{\Phi^1,\Phi^2}) 
	= \size(\Phi^1) + \size(\Phi^2)$.
\end{proposition}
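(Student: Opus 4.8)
The plan is to mirror the proof of Proposition~\ref{prop:parallel}, the only new feature being that the two subnetworks now read off \emph{disjoint} blocks of the input rather than a shared input, which actually simplifies matters: every weight matrix of $\FParallel{\Phi^1,\Phi^2}$, including the first, is block-diagonal. Write $\Phi^i = ((A^{(i)}_1,b^{(i)}_1),\ldots,(A^{(i)}_L,b^{(i)}_L))$ for $i=1,2$. Fix an input $x=(x_1,x_2)$ with $x_i\in\R^{d_i}$, and let $x^{(i)}_0:=x_i$, $x^{(i)}_\ell:=\varrho(A^{(i)}_\ell x^{(i)}_{\ell-1}+b^{(i)}_\ell)$ for $\ell=1,\ldots,L-1$, and $x^{(i)}_L:=A^{(i)}_L x^{(i)}_{L-1}+b^{(i)}_L$, so that by Definition~\ref{def:NeuralNetworks} one has $\realiz(\Phi^i)(x_i)=x^{(i)}_L$. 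Denote by $y_0,\ldots,y_L$ the intermediate vectors generated by $\FParallel{\Phi^1,\Phi^2}$ on input $x$.

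First I would prove by induction on $\ell\in\{0,1,\ldots,L\}$ that $y_\ell = (x^{(1)}_\ell, x^{(2)}_\ell)$. The base case is $y_0 = x = (x_1,x_2)$ by definition. For the step, the block structure of $A_\ell$ and the stacking of $b_\ell$ give the key identity
\[
A_\ell y_{\ell-1} + b_\ell
= \begin{pmatrix} A^{(1)}_\ell & 0 \\ 0 & A^{(2)}_\ell \end{pmatrix}\begin{pmatrix} x^{(1)}_{\ell-1} \\ x^{(2)}_{\ell-1}\end{pmatrix} + \begin{pmatrix} b^{(1)}_\ell \\ b^{(2)}_\ell \end{pmatrix}
= \begin{pmatrix} A^{(1)}_\ell x^{(1)}_{\ell-1} + b^{(1)}_\ell \\ A^{(2)}_\ell x^{(2)}_{\ell-1} + b^{(2)}_\ell \end{pmatrix}.
\]
For $\ell\leq L-1$, applying $\varrho$ componentwise to a vertically stacked vector yields the vertical stack of the componentwise $\varrho$-images, so $y_\ell = (\varrho(A^{(1)}_\ell x^{(1)}_{\ell-1}+b^{(1)}_\ell),\varrho(A^{(2)}_\ell x^{(2)}_{\ell-1}+b^{(2)}_\ell)) = (x^{(1)}_\ell,x^{(2)}_\ell)$. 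For $\ell=L$ no activation is applied and the same affine identity gives $y_L=(x^{(1)}_L,x^{(2)}_L)=(\realiz(\Phi^1)(x_1),\realiz(\Phi^2)(x_2))$, which is the claimed realization formula; the depth is $L$ and the input dimension $d_1+d_2$ directly from the construction.

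For the size statement I would simply observe that, since the off-diagonal blocks of $A_\ell$ are identically zero and $b_\ell$ merely stacks $b^{(1)}_\ell$ on $b^{(2)}_\ell$, we have $\norm[0]{A_\ell}=\norm[0]{A^{(1)}_\ell}+\norm[0]{A^{(2)}_\ell}$ and $\norm[0]{b_\ell}=\norm[0]{b^{(1)}_\ell}+\norm[0]{b^{(2)}_\ell}$ for every $\ell=1,\ldots,L$; summing over $\ell$ gives $\size(\FParallel{\Phi^1,\Phi^2})=\size(\Phi^1)+\size(\Phi^2)$. There is no genuine obstacle in this proof; the whole content is the bookkeeping observation that componentwise application of the activation commutes with vertical concatenation of vectors, which is exactly what lets the block-diagonal network run the two subnetwork evaluations independently and in parallel.
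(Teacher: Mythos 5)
The paper does not supply its own proof of this proposition; it is stated as a cited result from \cite[Setting 5.2]{EGJS2021}, exactly as Proposition~\ref{prop:parallel} is cited without proof from \cite[Definition 2.7]{PV2018}. Your proof is a correct and complete direct verification: the layer-by-layer induction exploits that every $A_\ell$ (including $\ell=1$, unlike in $\Parallel{\cdot,\cdot}$ where the first-layer matrices are merely stacked) is block-diagonal, that componentwise application of $\varrho$ commutes with vertical concatenation, and that the size identity follows by counting nonzero entries blockwise; there is no gap.
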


Finally, we recall the concatenation of two NNs.
\begin{definition}[Concatenation of NNs {{\cite[Definition 2.2]{PV2018}}}]
\label{def:pvconc}
For $L^{(1)}, L^{(2)} \in\N$,
let 
$ \Phi^1 = $
$ \Big(
	(A^{(1)}_1,b^{(1)}_1),\ldots,$ $(A^{(1)}_{L^{(1)}},b^{(1)}_{L^{(1)}})
	\Big) $ and
$\Phi^2 = \Big(
	(A^{(2)}_1,b^{(2)}_1),$ \linebreak
	$\ldots,(A^{(2)}_{L^{(2)}},b^{(2)}_{L^{(2)}})
	\Big)$
be two NNs 
such that the input dimension of $\Phi^1$, which we will denote by $k$, 
equals the output dimension of $\Phi^2$.
Then, the \emph{concatenation} of $\Phi^1$ and $\Phi^2$ 
is the NN of depth $L := L^{(1)} + L^{(2)} -1$ defined as
\begin{align*}
    \Phi^1 \bullet \Phi^2 := &\, ((A_1,b_1),\ldots,(A_L,b_L)),
    \\
    (A_\ell,b_\ell) = &\, (A^{(2)}_\ell,b^{(2)}_\ell),
	\qquad
    \text{ for }\ell=1,\ldots,L^{(2)}-1,
    \\
    A_{L^{(2)}} = &\, A^{(1)}_1 A^{(2)}_{L^{(2)}},
    \qquad
    b_{L^{(2)}} = A^{(1)}_1 b^{(2)}_{L^{(2)}} + b^{(1)}_1,
    \\
    (A_\ell,b_\ell) 
    	= &\, (A^{(1)}_{\ell-L^{(2)}+1},b^{(1)}_{\ell-L^{(2)}+1}),
    \qquad
	\text{ for }\ell=L^{(2)}+1,\ldots,L^{(1)}+L^{(2)}-1.
\end{align*}
\end{definition}
It follows immediately from this definition that
$\realiz( \Phi^1 \bullet \Phi^2 ) = \realiz( \Phi^1 ) \circ \realiz( \Phi^2 )$.

\subsection{$\tanh$ Emulation of Identity and Products}
\label{sec:nntanhidprod}

Unlike ReLU NNs, $\tanh$-activated NNs can not represent the 
identity exactly. As
various constructions require identity maps, we provide a 
corresponding $\tanh$ NN emulation of the identity.
We also recall the $\tanh$ NN emulation of products from \cite{DLM2021}.

\begin{lemma}[See {{\cite[Lemma 3.1]{DLM2021} and \cite[Corollary 3.7]{DLM2021}}}]
\label{lem:tanhidprod}
For all $\tau,M>0$
and all $L\in\N$, $L\geq 2$
there exists a $\tanh$-activated NN $\idnna{1}{L}{\tau}{M}$
of depth $L$, 
with input dimension one and output dimension one,
such that 
\begin{align}
\label{eq:assid}
\normc[W^{1,\infty}((-M,M))]{ \Id_\R - \realiz( \idnna{1}{L}{\tau}{M} ) } \leq \tau 
.
\end{align}
There exists $C>0$ such that for all $L\in\N$, $L\geq 2$ 
there holds
$\size(\idnna{1}{L}{\tau}{M}) \leq C L$
for a constant $C$ independent of $\tau,M$ and $L$,
and also the layer dimensions of the hidden layers 
(denoted by $N_1,\ldots,N_{L-1}$ in the notation of Definition \ref{def:NeuralNetworks})
are at most $C$.

For all $\tau,M>0$ 
there exists a $\tanh$ NN
$\prodnna{2}{\tau}{M}$
of depth $2$,
with input dimension two and output dimension one
such that
\begin{align*}
\normc[W^{1,\infty}((-M,M)^2)]{ \prod_{i=1}^2 x_i - \realiz( \prodnna{2}{\tau}{M} )(x_1,x_2) } 
	\leq &\, \tau
.
\end{align*}
There exists $C>0$ such that 
$\size( \prodnna{2}{\tau}{M} ) \leq C$
for a constant $C$ independent of $\tau,M$.
\end{lemma}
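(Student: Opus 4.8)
The plan is to establish both halves of the lemma by the two explicit constructions of \cite{DLM2021}, which I now sketch, indicating where the work lies. \emph{Identity.} First I would introduce the elementary shallow block $\Psi_\delta := ((\delta,0),(\delta^{-1},0))$, a $\tanh$ NN of depth $2$ whose realization is $g_\delta(x) = \delta^{-1}\tanh(\delta x)$. Using $\tanh'(y)=1-\tanh^2(y)$ and $|\tanh(y)|\le|y|$ one checks, for $|x|\le M$, that
\[
|g_\delta(x)-x| \le \tfrac13\delta^2 M^3, \qquad |g_\delta'(x)-1| = \tanh^2(\delta x) \le \delta^2 M^2, \qquad |g_\delta(x)| \le |x|.
\]
The last inequality shows that $g_\delta$ maps $(-M,M)$ into itself, so I can form the $(L-1)$-fold concatenation $\Psi_\delta\bullet\cdots\bullet\Psi_\delta$ of Definition~\ref{def:pvconc}: this is a $\tanh$ NN of depth $L$ with all hidden layers of width one — hence of size $\le CL$ with $C$ independent of $L,\tau,M$ — whose realization is $g_\delta^{\circ(L-1)}$. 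Next I would propagate the single-step bounds: a telescoping argument gives $\|g_\delta^{\circ(L-1)}-\Id_\R\|_{L^\infty((-M,M))}\le\tfrac13(L-1)\delta^2 M^3$, and the chain rule together with $0<g_\delta'\le1$ gives $\|(g_\delta^{\circ(L-1)})'-1\|_{L^\infty((-M,M))}\le 1-(1-\delta^2 M^2)^{L-1}\le(L-1)\delta^2 M^2$. Choosing $\delta$ small enough, of order $(\tau/((L-1)(1+M)^3))^{1/2}$, makes both quantities $\le\tau$, which is \eqref{eq:assid}; I would set $\idnna{1}{L}{\tau}{M}$ equal to this network, and the size and width bounds are as noted above.

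\emph{Products.} Here I would use polarization, $x_1x_2=\tfrac14[(x_1+x_2)^2-(x_1-x_2)^2]$, to reduce matters to a shallow emulation of $s\mapsto s^2$. Fix $b$ with $\tanh''(b)\ne0$ (e.g. $b=1$, where $\tanh''(b)=-2\tanh(b)(1-\tanh^2(b))\ne0$). Because $\tanh$ is odd, the odd-order terms cancel in $u\mapsto\tanh(b+u)+\tanh(b-u)$, so $\tanh(b+u)+\tanh(b-u)=2\tanh(b)+\tanh''(b)u^2+O(u^4)$, whence for small $\lambda>0$
\[
s^2 = \frac{\tanh(b+\lambda s)+\tanh(b-\lambda s)-2\tanh(b)}{\lambda^2\tanh''(b)} + O(\lambda^2 s^4).
\]
Applying this at $s=x_1+x_2$ and at $s=x_1-x_2$ and subtracting, the $2\tanh(b)$ terms cancel and $x_1x_2$ equals a fixed linear combination of the four evaluations $\tanh(b\pm\lambda(x_1\pm x_2))$, divided by $\lambda^2\tanh''(b)$, up to a remainder which — by Taylor's theorem and boundedness of the derivatives of $\tanh$ on $\R$ — is $O(\lambda^2(1+M)^4)$ in $W^{1,\infty}((-M,M)^2)$. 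The main term is exactly the realization of a depth-$2$ $\tanh$ NN with four hidden neurons (input weights $\pm\lambda$, common bias $b$, no output bias, output weights $\pm(4\lambda^2\tanh''(b))^{-1}$), whose size is an absolute constant. Taking $\lambda$ small enough, of order $\sqrt{\tau}/(1+M)^2$, and calling this network $\prodnna{2}{\tau}{M}$ gives the asserted product estimate.

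\emph{Where the work lies.} The product part is a routine second-order Taylor estimate once polarization has been invoked. The one point needing care is the identity part: I must verify that $g_\delta$ leaves $(-M,M)$ invariant — so that the single-step bounds apply at every stage of the composition — and that $L-1$ concatenations of a width-one block cost only $O(L)$ parameters with an $L$-independent constant, and then control the accumulated $W^{1,\infty}$ error, whose dependence on $L-1$ is absorbed by the choice of $\delta$. (Alternatively, one may simply invoke \cite[Lemma~3.1]{DLM2021} and \cite[Corollary~3.7]{DLM2021} verbatim.)
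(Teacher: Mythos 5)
Your proposal is correct, but it proves the lemma by a different route than the paper. The paper does not construct the shallow building blocks at all: it invokes \cite[Lemma 3.1]{DLM2021} for a depth-$2$ identity network and \cite[Corollary 3.7]{DLM2021} for the depth-$2$ product network, and its only real work is the depth extension, done by the recursion $\idnna{1}{L}{\tau}{M} := \idnna{1}{2}{\tau/3}{M+\tau/3} \bullet \idnna{1}{L-1}{\tau/3}{M}$ with the tolerance split as $\tau/3$ at each level (so no parameter needs to be tuned against $L$), followed by the width/size bookkeeping that yields $\size \leq CL$. You instead build everything explicitly: the identity from the scaled block $x\mapsto \delta^{-1}\tanh(\delta x)$, composed $L-1$ times, with the linear-in-$L$ error accumulation absorbed by choosing $\delta \sim (\tau/((L-1)(1+M)^3))^{1/2}$ (the contraction property $|g_\delta(x)|\leq |x|$ and $0<g_\delta'\leq 1$ correctly justify applying the one-step bounds along the composition and bounding the derivative product); and the product from polarization plus the symmetric second difference $\tanh(b+\lambda s)+\tanh(b-\lambda s)-2\tanh(b)$, which is in spirit the same finite-difference mechanism underlying the cited results of \cite{DLM2021}. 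Your version buys a self-contained, fully explicit construction with transparent constants (all hidden widths equal to one for the identity, four hidden neurons for the product), at the cost of weights that depend on $L$ and a slightly longer argument; the paper's version is shorter, keeps the per-step tolerance independent of $L$, and delegates the shallow approximations to the literature. Two cosmetic points: the cancellation of odd-order terms in $u\mapsto\tanh(b+u)+\tanh(b-u)$ is simply evenness of that function in $u$ and does not require oddness of $\tanh$; and since the paper measures $W^{1,\infty}$ as the maximum of the $L^\infty$ norms of the function and its first partials, your separate bounds on the value and the two partial derivatives of the product emulation are exactly what is needed, so no change is required there.
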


\begin{proof}
We first prove the statements regarding identity networks.
Without loss of generality, assume that $\tau\leq 1$
(if $\tau>1$,
we can use the identity network defined below with $1$ instead of $\tau$).

By \cite[Lemma 3.1]{DLM2021} 
there exists a $\tanh$ NN $\idnna{1}{2}{\tau}{M}$ 
of depth $2$ 
with input dimension one and output dimension one
and with a fixed size independent of $\tau,M$,
such that \eqref{eq:assid} holds.

For $L>2$ we use Definition \ref{def:pvconc} to define
$\idnna{1}{L}{\tau}{M} := \idnna{1}{2}{\tau/3}{M+\tau/3} \bullet \idnna{1}{L-1}{\tau/3}{M}$.
In this proof, we will use the shorthand notation 
$\Phi^1 := \idnna{1}{2}{\tau/3}{M+\tau/3}$
and
$\Phi^2 := \idnna{1}{L-1}{\tau/3}{M}$.
We can estimate the error by
\begin{align*}
&\,
\norm[L^\infty((-M,M))]{ \Id_\R - \realiz( \Phi^1 \bullet \Phi^2 ) }
	\\
	\leq &\, \norm[L^\infty((-M,M))]{ \Id_\R - \realiz( \Phi^2 ) }
		+ \norm[L^\infty((-M,M))]{ ( \Id_\R - \realiz( \Phi^1 ) ) \circ \realiz( \Phi^2 ) }
	\\
	\leq &\, \tau/3 + \tau/3
	\leq \tau
	,
	\\
&\,
\norm[L^\infty((-M,M))]{ \Id_\R' - \realiz( \Phi^1 \bullet \Phi^2 )' }
	\\
	\leq &\, \norm[L^\infty((-M,M))]{ \Id_\R' - \realiz( \Phi^2 )' }
		+ \normc[L^\infty((-M,M))]{ \big( ( \Id_\R - \realiz( \Phi^1 ) ) \circ \realiz( \Phi^2 ) \big)' }
	\\
	\leq &\, \norm[L^\infty((-M,M))]{ \Id_\R' - \realiz( \Phi^2 )' }
	\\
	&\, + \norm[L^\infty((-M-\tau/3,M+\tau/3))]{ \Id_\R' - \realiz( \Phi^1 )' }
			\norm[L^\infty((-M,M))]{ \realiz( \Phi^2 )' }
	\\
	\leq &\, \tau/3 + \tau/3 (1+\tau/3) 
	\leq \tau/3 + 2\tau/3
	\leq \tau
	.
\end{align*}

In terms of the constant $C$, which is independent of $\tau,M>0$,
for which $\size( \idnna{1}{2}{\tau}{M} ) \leq C$
(its existence follows from \cite[Lemma 3.1]{DLM2021}), 
it follows that the number of neurons in each hidden layer of $\idnna{1}{L}{\tau}{M}$
(which are denoted by $N_1,\ldots,N_{L-1}$ in the notation of Definition \ref{def:NeuralNetworks}) 
are all at most $C$.\footnote{
If the number of neurons in a layer 
is larger than the number of nonzero weights and biases in that layer,
any neurons for which all associated weights and biases vanish, can be removed.
If in a layer all weights and biases vanish,
the network realizes a constant function,
which means that the network can be replaced
by a network of depth and size at most $1$ 
which exactly emulates the constant function.
These facts are proved in \cite[Lemma G.1]{PV2018}.}
Therefore, the number of nonzero weights in each layer is at most $C^2$
and the number of nonzero biases in each layer is at most $C$,
giving a total network size of at most $LC(C+1)$,
which is the desired bound (when we write $C$ instead of the constant $C(C+1)$).

The statements for product networks correspond to \cite[Corollary 3.7]{DLM2021}.
\end{proof}

Identity networks with multiple inputs are obtained as the full parallelization 
of identity networks with one input.
\begin{definition}
\label{def:idmultivar}
For all $d\in\N$,
we define
$\idnna{d}{L}{\tau}{M} := \FParallel{ \idnna{1}{L}{\tau}{M}, \ldots, \idnna{1}{L}{\tau}{M} }$
as the full parallelization of $d$ identity networks from Lemma \ref{lem:tanhidprod}.
\end{definition}

\subsection{$\tanh$ Emulation of Analytic Functions}
\label{sec:nntanhanal}

Exponential convergence of $\tanh$ NNs for the approximation of analytic functions
is obtained by combining the result from Appendix \ref{sec:nntanhunivarcheb}
with the following classical result on polynomial approximation.
See e.g. \cite[Section 12.4]{Davis1975}. 
We use the formulation from \cite[Lemma 9]{Melenk1997}.

\begin{lemma}
\label{lem:interpolanal}
Let $I = (-1,1)$ and assume that 
$u: I \to \R$ is analytic on $\overline{I} = [-1,1]$,
i.e. there exist $C_u,K_u>0$ such that
for all $n\in\N_0$ holds
\begin{align}
\label{eq:analforinterpol}
\norm[L^\infty(I)]{ u^{(n)} } \leq C_u K_u^n n!
.
\end{align}

Then, there exist constants $C,\beta>0$ 
such that 
for all $p\in\N$ there exists a polynomial $v\in\mathcal{P}_p$
such that 
$\norm[W^{1,\infty}(I)]{ u - v } \leq C e^{-\beta p}$.
\end{lemma}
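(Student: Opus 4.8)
The plan is to reduce the statement to the classical theory of polynomial approximation of analytic functions on $[-1,1]$, the only subtlety being that the \emph{same} polynomial $v$ must approximate $u$ and $u'$ simultaneously in $L^\infty(I)$.

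First I would convert the Cauchy-type derivative bounds \eqref{eq:analforinterpol} into a holomorphic-extension statement. Expanding $u$ in its Taylor series about an arbitrary point $x_0\in[-1,1]$, the modulus of the coefficient of $(z-x_0)^n$ is bounded by $C_u K_u^n$, so the series converges on the complex disc of radius $1/K_u$ around $x_0$; letting $x_0$ range over $[-1,1]$ yields a holomorphic extension of $u$ to an open neighbourhood of $[-1,1]$ in $\C$. Any such neighbourhood contains the open region $\mathcal{E}_\rho$ enclosed by the Bernstein ellipse with foci $\pm1$ and semi-axis sum $\rho$, for every $\rho>1$ close enough to $1$, with the admissible $\rho$ depending only on $K_u$. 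On the compact set $\overline{\mathcal{E}_\rho}$ the function $u$ is bounded, say $\norm[L^\infty(\overline{\mathcal{E}_\rho})]{u}=:M_\rho<\infty$, and by Cauchy's integral formula $u'$ is holomorphic on $\mathcal{E}_\rho$ as well.

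Second I would apply the classical geometric convergence estimate for truncated \Cheb expansions of functions analytic in a Bernstein ellipse (see \cite[Section 12.4]{Davis1975}, or \cite[Lemma 9]{Melenk1997}): with $u=\sum_{k\geq0}a_kT_k$ its \Cheb expansion, analyticity on $\overline{\mathcal{E}_\rho}$ gives $\snorm{a_k}\leq 2M_\rho\,\rho^{-k}$, so the degree-$p$ truncation $v:=\sum_{k=0}^{p}a_kT_k\in\mathcal{P}_p$ satisfies $\norm[L^\infty(I)]{u-v}\leq\sum_{k>p}\snorm{a_k}\leq\frac{2M_\rho}{\rho-1}\rho^{-p}$. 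For the derivative I would differentiate term by term and use Markov's inequality $\norm[L^\infty(I)]{T_k'}=k^2$ to obtain $\norm[L^\infty(I)]{u'-v'}\leq\sum_{k>p}\snorm{a_k}\,k^2\leq 2M_\rho\sum_{k>p}k^2\rho^{-k}\leq \tilde C\,p^2\rho^{-p}$. Choosing any $\beta\in(0,\log\rho)$ and absorbing the polynomial factor $p^2$ into $\rho^{-p}=e^{-p\log\rho}$ turns both bounds into $\leq Ce^{-\beta p}$ with $C,\beta>0$ depending only on $C_u,K_u$, i.e. $\norm[W^{1,\infty}(I)]{u-v}\leq Ce^{-\beta p}$, as claimed.

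The $L^\infty(I)$ half is entirely classical, so the only point needing care is the derivative estimate with the \emph{same} polynomial $v$; term-by-term differentiation handles it, and the harmless polynomial loss from Markov's inequality is removed by shrinking $\beta$ slightly below $\log\rho$. As an alternative that avoids Markov's inequality, one may first approximate $u'$ (analytic on $\overline{\mathcal{E}_{\rho'}}$, $1<\rho'<\rho$) by $w\in\mathcal{P}_{p-1}$ with $\norm[L^\infty(I)]{u'-w}\leq Ce^{-\beta p}$ and then set $v(x):=u(-1)+\int_{-1}^{x}w(t)\,\dd t\in\mathcal{P}_p$, so that $v'=w$ and $\norm[L^\infty(I)]{u-v}\leq 2\norm[L^\infty(I)]{u'-w}$, which yields the same conclusion.
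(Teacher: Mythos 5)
Your proposal is correct and follows essentially the classical route to which the paper also appeals (analytic continuation to a Bernstein ellipse, geometric decay of \Cheb coefficients, Markov's inequality to pass from $L^\infty$ to $W^{1,\infty}$). The paper itself does not spell out a proof; it points to \cite[Section 12.4]{Davis1975} and \cite[Lemma 9 and 11]{Melenk1997}, and notes afterwards that a suitable $v$ is the Gau\ss--Lobatto or Clenshaw--Curtis interpolant.

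The one substantive divergence is the \emph{choice} of $v$: you take the truncated \Cheb expansion (the orthogonal projection), while the paper insists on the interpolant at Clenshaw--Curtis (or Gau\ss--Lobatto) nodes. For the bare existence statement of Lemma~\ref{lem:interpolanal} the two are interchangeable and your tail-plus-Markov estimate is perfectly sound, with the remark that $\sum_{k>p}k^{2}\rho^{-k}=O(p^{2}\rho^{-p})$ and shrinking $\beta$ below $\log\rho$ absorbs the algebraic factor. However, the paper's preference for interpolation is not cosmetic: in the downstream Proposition~\ref{prop:nntanhanal} it is used that the output-layer weights of the emulating network are linear combinations of the \emph{function values} of $u$ at the Clenshaw--Curtis nodes; for the projection, the coefficients $a_k$ are integrals of $u$ against $T_k$, which is less convenient for that construction. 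Your alternative argument (approximate $u'$ first, then antidifferentiate) is also valid and avoids Markov's inequality, but note that it requires $u'$ to be bounded on a slightly smaller Bernstein ellipse $\overline{\mathcal{E}_{\rho'}}$, which follows from Cauchy's estimate as you state; the resulting $v$ is again not an interpolant of $u$. In short, the proof is right; if you wanted it to serve the paper's later use, you would take $v$ to be the Clenshaw--Curtis interpolant as in \cite[Theorem~8.2]{Trefethen2020} and invoke that the interpolation error admits the same $\rho^{-p}$ bound by the aliasing argument (the interpolant's \Cheb coefficients differ from $a_k$ by sums of $a_{k+2jp}$, which decay at the same geometric rate).
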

For example, such a polynomial $v$ 
can be obtained from $u$ by polynomial interpolation 
in the Gau\ss--Lobatto points, see \cite[Lemma 11]{Melenk1997},
or in the Clenshaw--Curtis points,
see \cite[Theorem 8.2]{Trefethen2020} 
(the Clenshaw--Curtis points are introduced 
in \cite[Chapter 2]{Trefethen2020} 
and referred to as ``Chebyshev points'').

\begin{proposition}
\label{prop:nntanhanal}
Assume that $u$ satisfies \eqref{eq:analforinterpol}.

Then, there exist constants $C,\beta>0$ 
such that for all $p\in\N$
there exists a $\tanh$ NN $\Phi^{u,p}$
such that 
\[
\norm[W^{1,\infty}(I)]{ u - \realiz( \Phi^{u,p} ) } \leq C e^{-\beta p}
\]
with
\[
\depth( \Phi^{u,p} ) = \ceil{ \log_2(p) }+1, \;\;\mbox{and}\;\;
\size( \Phi^{u,p} ) \leq \tilde{C} p
\]
for a constant $\tilde{C}>0$ independent of $p$, $u$, $C$ and $\beta$.

The weights and biases in the hidden layers are independent of $u$.
The weights and biases in the output layer 
are linear combinations of the function values of $u$ in the Clenshaw--Curtis points.
\end{proposition}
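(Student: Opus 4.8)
The plan is to reduce the problem to polynomial approximation and then to emulate that polynomial by a $\tanh$ NN through its \Cheb expansion, using the \Cheb-polynomial emulation result of Appendix~\ref{sec:nntanhunivarcheb} together with the NN calculus of Section~\ref{sec:nncalculus}.

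First I would apply Lemma~\ref{lem:interpolanal} to obtain, for each $p\in\N$, a polynomial $v=v_p\in\mathcal{P}_p$ with $\norm[W^{1,\infty}(I)]{u-v}\leq C e^{-\beta p}$; by the remark following that lemma, $v$ may be taken to be the Clenshaw--Curtis interpolant of $u$, so that $v=\sum_{k=0}^p c_k T_k$ with $T_k$ the $k$-th \Cheb polynomial and the coefficients $c_k$ fixed linear combinations of the values of $u$ at the Clenshaw--Curtis points. Since $u$ is analytic on $\overline{I}$, these coefficients decay geometrically, so $\Sigma_p := \sum_{k=0}^p \snorm{c_k}$ is bounded uniformly in $p$; the cruder bound $\snorm{c_k}\leq C'\norm[L^\infty(I)]{u}$, which gives only $\Sigma_p\leq C'(p+1)\norm[L^\infty(I)]{u}$, would be equally sufficient below.

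Next I would invoke the \Cheb-polynomial emulation of Appendix~\ref{sec:nntanhunivarcheb} --- which is assembled from the identity and product networks of Lemma~\ref{lem:tanhidprod} by $\ceil{\log_2 p}$ ``doubling'' stages --- to get, for a tolerance $\tau>0$ still to be fixed, a $\tanh$ NN $\Psi$ of depth $\ceil{\log_2 p}+1$ and size $O(p)$ whose realization $(\widetilde T_0,\dots,\widetilde T_p)$ satisfies $\norm[W^{1,\infty}(I)]{T_k-\widetilde T_k}\leq\tau$ for $k=0,\dots,p$. The network $\Phi^{u,p}$ is then obtained by post-composing the (affine) output layer of $\Psi$ with the row vector $(c_0,\dots,c_p)$: the composition of an affine map with a linear map is again affine, so this changes only the output layer, leaves the depth at $\ceil{\log_2 p}+1$, and replaces the last weight matrix by a single row supported in the (at most $O(p)$) nonzero columns of the old one, whence $\size(\Phi^{u,p})\leq\size(\Psi)\leq\tilde C p$ with $\tilde C$ independent of $p$, $u$, $C$ and $\beta$. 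By construction $\realiz(\Phi^{u,p})=\sum_{k=0}^p c_k\widetilde T_k$; the hidden layers are those of $\Psi$ and hence independent of $u$, while the output-layer weights are the $c_k$, i.e. linear combinations of the Clenshaw--Curtis samples of $u$, as claimed. The error is then controlled by the triangle inequality,
\begin{align*}
\norm[W^{1,\infty}(I)]{u-\realiz(\Phi^{u,p})}
&\leq\norm[W^{1,\infty}(I)]{u-v}+\sum_{k=0}^p\snorm{c_k}\,\norm[W^{1,\infty}(I)]{T_k-\widetilde T_k}
\\
&\leq C e^{-\beta p}+\Sigma_p\,\tau ,
\end{align*}
and choosing $\tau:=e^{-\beta p}$ (respectively $\tau:=e^{-\beta p}/(p+1)$ in the crude-coefficient variant) and enlarging $C$ finishes the bound.

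The main obstacle is not in this assembly but is deferred to Appendix~\ref{sec:nntanhunivarcheb}: one must realise the \Cheb three-term recursion by a $\tanh$ network of depth \emph{exactly} $\ceil{\log_2 p}+1$ and size $O(p)$, and control how the per-stage product-emulation errors are amplified through the $\ceil{\log_2 p}$ stages, so that the internal tolerances required to reach overall accuracy $\tau$ can be taken small without increasing the depth or the asymptotic size. This is feasible precisely because the identity and product sub-networks of Lemma~\ref{lem:tanhidprod} have size independent of their tolerance, and because the \Cheb polynomials and their emulants stay $O(1)$ on $I$, so the approximation domains $(-M,M)$ entering Lemma~\ref{lem:tanhidprod} may be taken with $M=O(1)$.
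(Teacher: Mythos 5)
Your proof is correct and follows essentially the same route as the paper's: reduce to polynomial best-approximation via Lemma~\ref{lem:interpolanal}, then emulate the polynomial by a $\tanh$ NN via its \Cheb expansion using the Appendix~\ref{sec:nntanhunivarcheb} construction. The only cosmetic difference is that the paper packages the ``post-compose the \Cheb-emulation network with the coefficient row vector'' step into Corollary~\ref{cor:chebsums}, whereas you spell it out inline, and you make explicit the (correct) observation that $\sum_k\snorm{c_k}$ stays bounded for analytic $u$ (or, alternatively, grows only algebraically and can be absorbed by shrinking the internal tolerance, which is free since the \Cheb-network size is tolerance-independent) --- a point the paper's proof leaves implicit in its final inequality.
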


\begin{proof}
For all $p\in\N$, 
let $v\in\mathcal{P}_p$  and $\beta>0$ be as given by Lemma \ref{lem:interpolanal},
let $\delta = \exp(-\beta p)$
and let 
$\Phi^{u,p} := \Phi^{v,p}_\delta$ 
be the network constructed in Corollary \ref{cor:chebsums}.
Then, for a constant $C>0$ independent of $p$,
\begin{align*}
\norm[W^{1,\infty}(I)]{ u - \realiz( \Phi^{u,p} ) }
	\leq &\, \norm[W^{1,\infty}(I)]{ u - v } + \norm[W^{1,\infty}(I)]{ v - \realiz( \Phi^{v,p}_\delta ) }
	\\
	\leq &\, C \exp( -\beta p) + \exp( -\beta p) \sum_{\ell = 1}^p \snorm{ v_\ell }
	\leq 
		C \exp( -\beta p)
	.
\end{align*}
In addition, we recall from Corollary \ref{cor:chebsums} that
$\depth( \Phi^{u,p} ) = \depth( \Phi^{v,p}_\delta ) = \ceil{ \log_2(p) }+1$
and
$\size( \Phi^{u,p} ) = \size( \Phi^{v,p}_\delta ) \leq \tilde{C} p$ 
for a constant $\tilde{C}>0$ independent of $p$, $u$, $C$ and $\beta$.

We observe from the proof of Corollary \ref{cor:chebsums} 
that the hidden layer weights are independent of $u$,
and that the output layer weights are linear combinations of 
the \Cheb coefficients of $v$.
As mentioned in the text after Lemma \ref{lem:interpolanal},
we may take $v$ to be the interpolant of $u$ in the Clenshaw--Curtis points,
as in \cite[Theorem 8.2]{Trefethen2020}.
Then, the \Cheb coefficients of $v$ 
can be computed from the function values of $v$ in the Clenshaw--Curtis points,
which equal those of $u$ ($v$ interpolates $u$ in those points).
\end{proof}

Proposition \ref{prop:nntanhanal} 
also holds for NNs with more general activation functions.
In Appendix \ref{sec:GenAct}, 
we prove that it holds for NNs 
whose activation function is in $C^2(U)\setminus\mathcal{P}_1$
for a nonempty, connected open subset $U\subset\R$.
\subsection{Proof of Theorem~\ref{thm:nntanhsingpert}}
\label{sec:PrfMainRslt}
\begin{proof}
As in the proof of \cite[Theorem 16]{Melenk1997}, 
which we recalled in Proposition \ref{prop:Melenk1997},
for $\kappa_0>0$ and $\kappa\in(0,\kappa_0)$ as in that proposition
we distinguish two cases.
In the first, \emph{asymptotic} case, 
the polynomial degree is so large with respect to $\e$
that $u_{\e}$ can be approximated directly, 
without treating the boundary layers separately.
In the second, \emph{pre-asymptotic} case,
we treat the boundary layers separately 
according to \eqref{eq:decompexplicit}--\eqref{eq:explicitbl}.

If $\kappa p \e \geq \tfrac12$, 
then the first part of the proof of \cite[Theorem 16]{Melenk1997}
shows that
for all $p\in\N$
there exists a polynomial\footnote{
In \cite{Melenk1997},
$v\in\mathcal{P}_p$ is taken to be the interpolant of $u_{\e}$ 
in the $p+1$ Gau\ss--Lobatto points.}
$v\in\mathcal{P}_p$
such that 
$\norm[W^{1,\infty}(I)]{ u - v } \leq C e^{-\beta p}$
for constants $C,\beta>0$ independent of $p$ and $\e$.
This polynomial can be approximated by a $\tanh$ NN by Corollary \ref{cor:chebsums}.
Defining $\delta = \exp(-\beta p)$ and $\Phi^{u_{\e},p}_{\e} := \Phi^{v,p}_{\delta}$,
there exists a constant $C>0$ independent of $p$ and $\e$ such that
\begin{align*}
\norm[W^{1,\infty}(I)]{ u - \realiz( \Phi^{u_{\e},p}_{\e} ) }
\leq &\, \norm[W^{1,\infty}(I)]{ u - v } + \norm[W^{1,\infty}(I)]{ v - \realiz( \Phi^{v,p}_\delta ) }
\\
\leq &\, C \exp( -\beta p) + \exp( -\beta p) \sum_{\ell = 1}^p \snorm{ v_\ell }
\leq 
	C \exp( -\beta p)
	,
\end{align*}
where $(v_\ell)_{\ell=0}^p$ are the \Cheb coefficients of $v$.
In addition, we recall from Corollary \ref{cor:chebsums} that
$\depth( \Phi^{u_{\e},p}_{\e} ) = \depth( \Phi^{v,p}_\delta ) = \ceil{ \log_2(p) }+1$
and
$\size( \Phi^{u_{\e},p}_{\e} ) = \size( \Phi^{v,p}_\delta ) \leq \tilde{C} p$ 
for a constant $\tilde{C}>0$ independent of $f$, $p$, $\e$, $C$ and $\beta$.
In Corollary \ref{cor:chebsums}, the hidden layer weights depend only on $p$.

If $\kappa p \e < \tfrac12$,
we use the decomposition \eqref{eq:decomp}
and separately approximate the $\e$-independent smooth part $u^S_{\e}$
and the boundary layer functions, 
which equal
$\tilde{u}^{\pm}_{\e}(x) 
 = C^{\pm} e^{- (1\mp x)/\e} $
by Remark \ref{rem:blexp},
for some constants $C^{\pm} > 0$ which are bounded independently of $\e$.

We approximate $u^S_{\e}$
by the $\tanh$ NN $\Phi^{u^S_{\e},p}$
from Proposition \ref{prop:nntanhanal}.
There exist constants $C,\beta>0$ independent of $p$ and $\e$, 
and $\tilde{C}>0$ independent of $f$, $p$, $\e$, $C$ and $\beta$,
such that it satisfies
$\norm[W^{1,\infty}(I)]{ u^S_{\e} - \realiz( \Phi^{u^S_{\e},p} ) } \leq C e^{-\beta p}$,
$\depth( \Phi^{u^S_{\e},p} ) = \ceil{ \log_2(p) }+1$
and
$\size( \Phi^{u^S_{\e},p} ) \leq \tilde{C} p$.

For the approximation of $u^{\pm}_{\e}$,
we use the approximation of the exponential function 
from Proposition \ref{lem:tanhnnexp},
the concatenation from 
Definition \ref{def:pvconc}
and 
the identity network from 
Lemma \ref{lem:tanhidprod}
to define
\begin{align}
\label{eq:tanhbldef}
\Phi^{\pm}_{\e}
	:= ((A^{\pm}_1,b^{\pm}_1,\Id_{\R})) 
	\bullet \Phi^{\exp}_\tau 
	\bullet \idnna{1}{L-1}{\delta}{M} 
	\bullet ((A^{\pm}_2,b^{\pm}_2,\Id_{\R}))
	,
\end{align}
for 
$A^{\pm}_1 = C^{\pm} \exp(1) \in\R^{1 \times 1}$,
$b^{\pm}_1 = 0 \in\R^1$,
$A^{\pm}_2 = \mp 1/\e \in \R^{1 \times 1}$,
$b^{\pm}_2 = 1/\e +1 \in \R^1$,
$\tau := \exp(-\beta p) \e$,
$\delta := \exp(-\beta p) \e$,
$M := 2/\e +1$
and
$L := \depth( \Phi^{u^S_{\e},p} ) = \ceil{ \log_2(p) }+1$.
 
For $p=1$, we have $L=1$ 
and omit ``$\bullet \idnna{1}{L-1}{\delta}{M}$'' from the definition
(because identity networks of depth $0$ have not been defined).
This will allow minor simplifications in the error bounds 
and the bounds on the network size for the case $p=1$,
which we will not consider explicitly.
 
Denoting by $P$ the affine transformation 
$P:x \mapsto A^{\pm}_2 x + b^{\pm}_2 = (1\mp x)/\e + 1$,
the networks $\Phi^{\pm}_{\e}$ realize 
\begin{align}
\label{eq:tanhblrealiz}
\realiz( \Phi^{\pm}_{\e} )
	= &\, C^{\pm} \exp(1) \realiz( \Phi^{\exp}_{\tau} ) 
		\circ \realiz( \idnna{1}{L-1}{\delta}{M} ) \circ P
.
\end{align}
We note that for $x\in[-1,1]$
holds
$P(x) = ( 1 \mp x ) /\e + 1 \in [1,1+2/\e]$,
and that $\delta<1$,
from which it follows that 
$\realiz( \idnna{1}{L-1}{\delta}{M} )\circ P(x) 
	\in [1-\delta,1+\delta+2/\e] \subset [0,2+2/\e]$.
It is necessary to add a positive number 
to the input (or output) of the identity network 
in order to guarantee that the input of $\Phi^{\exp}_\tau$ is nonnegative.
This is necessary in order to apply the error bound for $\Phi^{\exp}_\tau$ on $[0,\infty)$.
Specifically, 
we added $+1$ in the input layer, 
which is compensated for by the factor $\exp(1)$ in the output layer.
Also, we see that the inputs of the identity network
are indeed bounded in absolute value by $M = 2/\e+1$.
The error can be bounded as follows:
\begin{align*}
&\,
\norm[L^\infty(I)]{ \tilde{u}^{\pm}_\e - \realiz( \Phi^{\pm}_{\e} ) }
	\\
	= &\, C^{\pm} \exp(1) \norm[L^\infty(I)]{ \exp(-\cdot) \circ \Id_\R \circ P
		- \realiz( \Phi^{\exp}_{\tau} ) \circ \realiz( \idnna{1}{L-1}{\delta}{M} ) \circ P }
	\\
	\leq &\, C^{\pm} \exp(1) \normc[L^\infty(I)]{ \big( \exp(-\cdot) \circ \Id_\R
		- \exp(-\cdot) \circ \realiz( \idnna{1}{L-1}{\delta}{M} ) \big) \circ P }
	\\
	&\, + C^{\pm} \exp(1) \normc[L^\infty(I)]{ \big( \exp(-\cdot) - \realiz( \Phi^{\exp}_{\tau} ) \big) 
		\circ \realiz( \idnna{1}{L-1}{\delta}{M} ) \circ P }
	\\
	\leq &\, C^{\pm} \exp(1) \normc[L^\infty((0,2+2/\e))]{ -\exp(-\cdot) } 
		\normc[L^\infty((1,1+2/\e))]{ \Id_\R - \realiz( \idnna{1}{L-1}{\delta}{M} ) }
	\\
	&\, + C^{\pm} \exp(1) \normc[L^\infty((0,2+2/\e))]{ \exp(-\cdot) - \realiz( \Phi^{\exp}_{\tau} ) }
	\\
	\leq &\, C^{\pm} \exp(1) ( \delta + \tau ) 
	\leq C \exp(-\beta p)
	,
\end{align*}
for a constant $C$ independent of $\e$ and $p$.
To obtain bounds on the error in the derivative,
we first estimate
\begin{align*}
&\, 
\normc[L^\infty((1,1+2/\e))]{ \big( \exp(-\cdot) \circ \Id_\R
		- \realiz( \Phi^{\exp}_{\tau} ) \circ \realiz( \idnna{1}{L-1}{\delta}{M} ) \big)'}
	\\
	\leq &\, \normc[L^\infty((1,1+2/\e))]{ \big( (-\exp(-\cdot)) \circ \Id_\R
		- ( -\exp(-\cdot) ) \circ \realiz( \idnna{1}{L-1}{\delta}{M} ) \big) \cdot \Id_\R'}
	\\
	&\, + \normc[L^\infty((1,1+2/\e))]{ \big( ( -\exp(-\cdot) ) 
		\circ \realiz( \idnna{1}{L-1}{\delta}{M} ) \big) 
		\cdot \big( \Id_\R' - \realiz( \idnna{1}{L-1}{\delta}{M} ) ' \big) }
	\\
	&\, + \normc[L^\infty((1,1+2/\e))]{ \big( (-\exp(-\cdot) ) - \realiz( \Phi^{\exp}_{\tau} )' \big) 
		\circ \realiz( \idnna{1}{L-1}{\delta}{M} ) \cdot \realiz( \idnna{1}{L-1}{\delta}{M} )' }
	\\
	\leq &\, \normc[L^\infty((0,2+2/\e))]{ \exp(-\cdot) } 
		\normc[L^\infty((1,1+2/\e))]{ \Id_\R - \realiz( \idnna{1}{L-1}{\delta}{M} ) }
		\normc[L^\infty((1,1+2/\e))]{ \Id_\R' }
	\\
	&\, + \normc[L^\infty((0,2+2/\e))]{ -\exp(-\cdot) } 
			\normc[L^\infty((1,1+2/\e))]{ \Id_\R' - \realiz( \idnna{1}{L-1}{\delta}{M} )' }
	\\
	&\, 
		+ \normc[L^\infty((0,2+2/\e))]{ (-\exp(-\cdot) ) - \realiz( \Phi^{\exp}_{\tau} )' }
			\normc[L^\infty((1,1+2/\e))]{ \realiz( \idnna{1}{L-1}{\delta}{M} )' }
	\\
	\leq &\, \delta + \delta + \tau(1+\delta)
	\leq 2\delta + 2\tau
	,
\end{align*}
and get
\begin{align*}
&\,
\norm[L^\infty(I)]{ (\tilde{u}^{\pm}_\e)' - \realiz( \Phi^{\pm}_{\e} )' }
	\\
	= &\, C^{\pm} \exp(1) \normc[L^\infty(I)]{ \big( \exp(-\cdot) \circ \Id_\R
		- \realiz( \Phi^{\exp}_{\tau} ) \circ \realiz( \idnna{1}{L-1}{\delta}{M} ) \big)' \circ P \cdot P' }
	\\
	\leq &\, C^{\pm} \exp(1) \normc[L^\infty((1,1+2/\e))]{ \big( \exp(-\cdot) \circ \Id_\R
		- \realiz( \Phi^{\exp}_{\tau} ) \circ \realiz( \idnna{1}{L-1}{\delta}{M} ) \big)'}
		\norm[L^\infty(I)]{ P' }
	\\
	\leq &\, C^{\pm} \exp(1) ( 2 \delta + 2 \tau ) /\e
	\leq C \exp(-\beta p)
	,
\end{align*}
where $C>0$ again denotes a constant independent of $\e$ and $p$.
The depth indeed equals
\begin{align*}
\depth( \Phi^{\pm}_{\e} ) 
	= \depth( \Phi^{\exp}_\tau ) - 1 + \depth( \idnna{1}{L-1}{\delta}{M} ) 
	= 2 - 1 + (L-1) = L
.
\end{align*}
The NN size can be estimated
by arguments similar to those used in 
the proof of Lemma \ref{lem:tanhidprod},
as follows.
The first $L-2$ hidden layer dimensions of $\Phi^{\pm}_{\e}$ 
equal those of $\idnna{1}{L-1}{\delta}{M}$,
which are bounded by $C_*$ because of Lemma \ref{lem:tanhidprod}.
The dimension of the last hidden layer of $\Phi^{\pm}_{\e}$
equals the dimension of the one hidden layer of $\Phi^{\exp}_\tau$, 
which is $1$.
As a result, 
all layer dimensions of $\Phi^{\pm}_{\e}$ are bounded by $C_*$,
and the network size is bounded by 
$L C_* ( C_* + 1 ) \leq \tilde{C} ( \log_2(p) + 1 )$
for some constant $\tilde{C}>0$ 
independent of $f$, $p$, $\e$, $C$ and $\beta$.

The last term $u^R_{\e}$ in \eqref{eq:decompexplicit} is small and can be neglected.
As shown in the last step in the proof of \cite[Theorem 16]{Melenk1997},
there exist constants $C,\beta >0$, independent of $p$ and $\e$, such that 
\begin{align*}
\norm[W^{1,\infty}(I)]{ u^R_{\e} }
	\leq C \exp(-\beta p).
\end{align*}

Finally, we use Proposition \ref{prop:sum} to add the subnetworks
and define
\begin{align*}
\Phi^{u_{\e},p}_{\e} 
	:= \Phi^{u^S_{\e},p} + \Phi^{+}_{\e} + \Phi^{-}_{\e}
.
\end{align*}
It satisfies the error bound
\begin{align*}
&\,
\norm[W^{1,\infty}(I)]{ u_{\e} - \realiz( \Phi^{u_{\e},p}_{\e} ) }
	\\
	\leq &\, \norm[W^{1,\infty}(I)]{ u^S_{\e} - \realiz( \Phi^{u^S_{\e},p} ) }
		+ \norm[W^{1,\infty}(I)]{ \tilde{u}^{+}_\e - \realiz( \Phi^{+}_{\e} ) }
		+ \norm[W^{1,\infty}(I)]{ \tilde{u}^{-}_\e - \realiz( \Phi^{-}_{\e} ) }
	\\
	&\, + \norm[W^{1,\infty}(I)]{ u^R_{\e} }
	\\
	\leq &\, C e^{-\beta p}
.
\end{align*}
Moreover,
$\depth( \Phi^{u_{\e},p}_{\e} ) = \ceil{ \log_2(p) }+1$
and
\begin{align*}
\size( \Phi^{u_{\e},p}_{\e} )
	\leq &\, \size( \Phi^{u^S_{\e},p} ) + \size( \Phi^{+}_{\e} ) + \size( \Phi^{-}_{\e} )
	\\
	\leq &\, \tilde{C} p + \tilde{C} ( \log_2(p) + 1 ) + \tilde{C} ( \log_2(p) + 1 )
	\leq \tilde{C} p
	,
\end{align*}
for a constant $\tilde{C}>0$ independent of $f$, $p$, $\e$, $C$ and $\beta$.

The hidden layer weights and biases of $\Phi^{u^S_{\e},p}$ 
from Proposition \ref{prop:nntanhanal}
are independent of $u^S_{\e}$ and depend only on $p$.
The hidden layer weights and biases of $\Phi^{\pm}_{\e}$ 
only depend on $\e$, $p$ and $\beta$.
\end{proof}
\begin{remark}\label{rmk:ExBCs}
Exact boundary conditions can be imposed 
by slightly adjusting the constants $C^{\pm}$ 
in the formula for the boundary layers.
As the approximation error is exponentially small in $L^\infty(I)$,
also the necessary change in $C^{\pm}$ is exponentially small,
and so is the additional error in 
$\e^{-1/2} \norm[L^2(I)]{\circ}$, $\norm[L^\infty(I)]{\circ}$ 
and $\e^{1/2} \norm[H^1(I)]{\circ}$.
\end{remark}

\section{Conclusions and Generalizations}
\label{sec:Concl}
We summarize the principal findings of the present paper,
i.e., robust expression rate bounds for solutions $u^\e$ of
the model singular perturbation problem in Section~\ref{sec:model}
by several classes of DNNs, with either ReLU, $\tanh$ or sigmoid activation, 
or of spiking type.

For the model singular perturbation problem in Section~\ref{sec:model}
we established robust w.r. to $\e$ exponential expression rate bounds
for the solution $u_\e$ of \eqref{eq:de}--\eqref{eq:bc} 
by deep neural networks. 
We considered in detail several types and architectures
of DNNs, and the impact of architecture and activation on the
approximation rates.

The robust, exponential expression rate bounds proved in
Section~\ref{sec:relunn} for strict ReLU NNs
in Propositions~\ref{prop:relubalanced} and \ref{prop:reluexp}
implied, with a ReLU NN-to-spiking NN conversion algorithm from 
\cite{SWBCPG2022}, corresponding expression rate bounds also 
for so-called spiking NNs, in Section~\ref{sec:spiking}.

We also proved in a particular case of \eqref{eq:de}--\eqref{eq:bc} 
with exponential boundary
layer functions in Section~\ref{sec:tanhanal} 
that $\tanh( )$-activated NNs provide better
(still robust, exponential) expression rates.
In Section~\ref{sec:GenAct},
we show the same result for sigmoid NNs.

These results indicate that 
\emph{in order to
resolve multivariate exponential boundary layers, deep NNs 
with $\tanh$ or sigmoid activations afford expression rates
which are uniform w.r. to the length-scale parameter}.
With these activations in particular,
boundary layer resolution in deep NN approximations of PDE solutions
does not require ``augmentation'' of the DNN feature space with 
analytic boundary layers as proposed e.g. 
in the recent \cite{chang2023singular} and in the references there.
See e.g. \cite{AD2022} for a recent computational approach 
which does not rely on such augmentations.

In the proofs in Section~\ref{sec:tanhanal}, use was made 
of novel $\tanh$-NN emulation rate bounds for \Cheb polynomials, 
which are of independent interest, with their lengthy proofs
relegated to Appendix~\ref{sec:nntanhunivarcheb}.

\backmatter

\begin{appendices}

\section{$\tanh$ Emulation of Univariate \Cheb Polynomials}
\label{sec:nntanhunivarcheb}

A key step in the expression rate analysis is the 
analysis of $\tanh$-expression rates of \Cheb polynomials.
As these rates are, due to the wide use of \Cheb polynomials
in spectral methods and regression (e.g. \cite{OS2023,RauhtChS}), 
of independent interest, we provide a detailed analysis. 
We also recall that corresponding
results for strict ReLU NNs have been obtained in \cite{OS2023}.
In this appendix, we maintain all notations from the main text.

For $m\in\N$, $m\geq2$,
we construct in Definition \ref{def:univarcheb} below
a $\tanh$ NN $\Psi^{\ch,m}_\delta$
approximating all univariate \Cheb polynomials
of degree $1,\ldots,m$ with 
$W^{1,\infty}((-1,1))$-error at most $\delta\in(0,1)$.
We denote by $T_k$, $k\in\N_0$ the \Cheb polynomial of degree $k$ (of the first kind),
normalized such that $T_k(1) = 1$ for all $k\in\N_0$.
As in \cite[Proposition 7.2.2]{JOdiss}
(see also \cite[Appendix A]{HOS2022}),
the network has a binary tree structure,
similar to the one in \cite[Proposition 4.2]{OPS2020}.
It is based on the recursion
\begin{align*}
T_{m+n} = 2 T_{m} T_{n} - T_{\snorm{m-n}},
	\quad
T_0(x) = 1, 
	\quad
T_1(x) = x,
	\qquad
&\text{ for all } x\in\R 
\\
&\text{ and } m,n\in\N_0.
\end{align*}
This recursion is related to the addition rule for cosines
and was first used for NN construction in \cite{TLY2019chebnet}.

The construction uses 
the identity and product networks from Lemma \ref{lem:tanhidprod}.
In Lemma \ref{lem:tanhidprod}, 
the size of product networks and identity networks is independent of the desired accuracy.
This allows us to make minor simplifications 
to the construction that was used in the context of ReLU NNs 
in \cite{OPS2020,JOdiss}.

\begin{definition}
\label{def:univarcheb}
For all $\delta\in(0,1)$ we define 
\begin{align*}
\Psi^{\ch,2}_\delta 
	:= &\, ((A^{\ch,2}_{\delta,2},b^{\ch,2}_{\delta,2},\Id_{\R^2})) 
		\bullet \FParallel{ \idnna{1}{2}{\delta}{1}, \prodnna{2}{\delta/4}{1} }
	\bullet ((A^{\ch,2}_{\delta,1},b^{\ch,2}_{\delta,1},\Id_{\R^3}))
\end{align*}
where
$A^{\ch,2}_{\delta,2} := \operatorname{diag}(1,2) \in \R^{2\times2}$ is a diagonal matrix
with diagonal entries $1$ and $2$,
$b^{\ch,2}_{\delta,2} := (0,-1)^\top \in \R^2$,
$A^{\ch,2}_{\delta,1} := (1,1,1)^\top \in \R^{3\times1}$ and $b^{\ch,2}_{\delta,1} := 0 \in \R^3$.
Its realization is 
\[
\realiz( \Psi^{\ch,2}_\delta ) : \R \to \R^2: x \mapsto 
	\big( \realiz( \idnna{1}{2}{\delta}{1} )(x) , 2 \realiz( \prodnna{2}{\delta/4}{1} )(x,x) - 1 \big).
\]
To define $\Psi^{\ch,m}_\delta $,
for $m\in\N$ satisfying $m>2$,
let $\tilde{m} := \min\{ 2^k : 2^k \geq m, k\in\N \} < 2m$.
This means that there exists $k\in\N$ such that $\tilde{m} = 2^k$.
Then $2^{k-1} = \tilde{m}/2\geq 2$ 
implies that $k\geq 2$,
which implies that $\tilde{m}/2 = 2^{k-1}$ 
is an even number.

Let $\theta := \delta/(4m^2)$
and note that $4m^2 \geq 36$.
We use the following auxiliary matrices and vectors.
Let $A^{\ch,m}_{\delta,1} \in \R^{(2m-\tilde{m}/2)\times (\tilde{m}/2)}$
be defined by 
\begin{align*}
(A^{\ch,m}_{\delta,1})_{ij} 
	:= 
	&\, \begin{cases}
	1 & \text{ if } i = j \leq \tilde{m}/2,
	\\
	1 & \text{ if } i > \tilde{m}/2 \text{ and } j = \tilde{m}/4 + \ceil{ \tfrac{ i - 1 - \tilde{m}/2 }{4} },
	\\
	0 & \text{ else},
	\end{cases}
\end{align*}
and let $b^{\ch,m}_{\delta,1} := 0 \in \R^{2m-\tilde{m}/2}$.
In addition, let $A^{\ch,m}_{\delta,2} \in \R^{m\times m}$ and $b^{\ch,m}_{\delta,2} \in \R^m$
be defined by 
\begin{align*}
(A^{\ch,m}_{\delta,2})_{ij} 
	:= &\, \begin{cases}
	1 & \text{ if } i = j \leq \tilde{m}/2,
	\\
	2 & \text{ if } i = j > \tilde{m}/2,
	\\
	-1 & \text{ if } i > \tilde{m}/2 \text{ is odd and } j = 1,
	\\
	0 & \text{ else}.
	\end{cases}
	\\
(b^{\ch,m}_{\delta,2})_i 
	:= &\, \begin{cases}
	-1 & \text{ if } i > \tilde{m}/2 \text{ is even},
	\\
	0 & \text{ else}.
	\end{cases}
\end{align*}
Then, we recursively define 
\begin{align}
\nonumber
\Psi^{\ch,m}_\delta 
	:= &\, ((A^{\ch,m}_{\delta,2},b^{\ch,m}_{\delta,2},\Id_{\R^m}))
		\bullet \FParallel{ \idnna{\tilde{m}/2}{2}{\theta}{2}, \prodnna{2}{\theta}{2}, \ldots, \prodnna{2}{\theta}{2} }
	\\
	&\, \bullet ((A^{\ch,m}_{\delta,1},b^{\ch,m}_{\delta,1},\Id_{\R^{2m-\tilde{m}/2}}))
		\bullet \Psi^{\ch,\tilde{m}/2}_{\theta}
,
\label{eq:defpsichm}
\end{align}
where the full parallelization contains $m - \tilde{m}/2$ product networks.
In the remainder of this definition, 
we will abbreviate 
$\Psi^{m}_{\delta} := \Psi^{\ch,m}_{\delta}$
and
$\Psi^{\tilde{m}/2}_{\theta} := \Psi^{\ch,\tilde{m}/2}_{\theta}$.
By construction, 
the realization $\realiz( \Psi^{m}_{\delta} ) : \R \to \R^m$ satisfies
\begin{align*}
\realiz( \Psi^{m}_{\delta} )_j 
	= &\, \realiz( \idnna{1}{2}{\theta}{2} ) \circ \realiz( \Psi^{\tilde{m}/2}_{\theta} )_j,
	\qquad
	\text{ if } j \leq \tilde{m}/2,
	\\
\realiz( \Psi^{m}_{\delta} )_j 
	= &\, 2 \realiz( \prodnna{2}{\theta}{2} )(
	\realiz( \Psi^{\tilde{m}/2}_{\theta} )_{\floor{j/2}}, 
	\realiz( \Psi^{\tilde{m}/2}_{\theta} )_{\ceil{j/2}} )
	-1,
	\qquad
	\text{ if } j > \tilde{m}/2 \text{ is even},
	\\
\realiz( \Psi^{m}_{\delta} )_j 
	= &\, 2 \realiz( \prodnna{2}{\theta}{2} )(
	\realiz( \Psi^{\tilde{m}/2}_{\theta} )_{\floor{j/2}}, 
	\realiz( \Psi^{\tilde{m}/2}_{\theta} )_{\ceil{j/2}} )
	- \realiz( \idnna{1}{2}{\theta}{2} ) \circ \realiz( \Psi^{\tilde{m}/2}_{\theta} )_1,
	\\
	&\, \text{ if } j > \tilde{m}/2 \text{ is odd}.
\end{align*}
We used that by Definition \ref{def:idmultivar}
the subnetwork $\idnna{\tilde{m}/2}{2}{\theta}{2}$
is the full parallelization of $\tilde{m}/2$ networks $\idnna{1}{2}{\theta}{2}$,
thus $\realiz( \idnna{\tilde{m}/2}{2}{\theta}{2} )(x)_j = \realiz( \idnna{1}{2}{\theta}{2} )(x_j)$.
\end{definition}

Properties of the NNs in Definition \ref{def:univarcheb} are as follows.
\begin{proposition}
\label{prop:univarcheb}
For all $m\in\N$, $m\geq 2$ and for every $\delta\in(0,1)$
the NN $\Psi^{\ch,m}_\delta$ 
in Definition \ref{def:univarcheb} satisfies
\begin{align*}
\norm[W^{1,\infty}((-1,1))]{ T_k - \realiz( \Psi^{\ch,m}_\delta )_k } \leq \delta,
	\qquad\text{ for all } k = 1,\ldots,m,
\end{align*}
and, for some constant $C$ which is independent of $m$ and $\delta$,
\[
\depth( \Psi^{\ch,m}_\delta ) = \ceil{ \log_2(m) }+1\;,\;\;\mbox{and}\;\;
\size( \Psi^{\ch,m}_\delta ) \leq C m.
\]
\end{proposition}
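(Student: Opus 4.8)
The plan is to argue by induction along the binary-tree recursion \eqref{eq:defpsichm} underlying Definition~\ref{def:univarcheb}, propagating a $W^{1,\infty}((-1,1))$-error estimate through one doubling step at a time. The two inputs that drive the estimate are, on the one hand, the classical bounds $\norm[L^\infty((-1,1))]{T_k} = 1$ and $\norm[L^\infty((-1,1))]{T_k'} = k^2$ for the normalized \Cheb polynomials together with the three-term recursion $T_{a+b} = 2T_a T_b - T_{\snorm{a-b}}$, and on the other hand, the accuracy-independent $W^{1,\infty}$ bounds for the identity and product subnetworks $\idnna{1}{2}{\cdot}{\cdot}$ and $\prodnna{2}{\cdot}{\cdot}$ from Lemma~\ref{lem:tanhidprod}. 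Throughout, I would write $\realiz_k := \realiz(\Psi^{\ch,\tilde m/2}_\theta)_k$ for the components of the inner network.

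First I would dispatch the base case $m = 2$. Since $T_1 = \Id_\R$ on $(-1,1)$, the first component $\realiz(\idnna{1}{2}{\delta}{1})$ satisfies the claimed bound directly by \eqref{eq:assid} with $M = 1$; for $T_2(x) = 2x^2 - 1$ one composes $\prodnna{2}{\delta/4}{1}$ with the diagonal embedding $x \mapsto (x,x)$, so that the value deviates from $x^2$ by at most $\delta/4$ while, by the chain rule, the derivative $\partial_1 g(x,x) + \partial_2 g(x,x)$ deviates from $2x = x + x$ by at most $\delta/4 + \delta/4$, and multiplication by $2$ yields the $W^{1,\infty}((-1,1))$-bound $\delta$. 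For the inductive step I would assume $\norm[W^{1,\infty}((-1,1))]{T_k - \realiz_k} \leq \theta$ for all $k \leq \tilde m/2$, with $\theta := \delta/(4m^2)$; since $\theta < 1$ this forces $\snorm{\realiz_k} \leq 1 + \theta \leq 2$ and $\snorm{\realiz_k'} \leq k^2 + \theta$, so the subnetworks with parameter $M = 2$ are evaluated on their domains of validity. Then one propagates componentwise: for $j \leq \tilde m/2$, $\realiz(\Psi^{\ch,m}_\delta)_j = \realiz(\idnna{1}{2}{\theta}{2}) \circ \realiz_j$, and the triangle inequality with \eqref{eq:assid} gives value error $\leq 2\theta$ and, by the chain rule together with $\snorm{\realiz_j'} \leq k^2 + \theta \leq m^2 + 1$, derivative error $\leq \theta + \theta(m^2+1)$; for $j > \tilde m/2$ one invokes $T_j = 2 T_{\floor{j/2}} T_{\ceil{j/2}} - T_{\snorm{\ceil{j/2} - \floor{j/2}}}$, where the subtracted term is $T_0 \equiv 1$ if $j$ is even and $T_1$ if $j$ is odd -- exactly matching the construction -- and estimates the difference between $2\realiz(\prodnna{2}{\theta}{2})(\realiz_{\floor{j/2}}, \realiz_{\ceil{j/2}})$ and $2 T_{\floor{j/2}} T_{\ceil{j/2}}$ by replacing one factor and the product map at a time. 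The derivative estimate runs through the product rule and picks up the amplification $\norm[L^\infty((-1,1))]{T_k'} = k^2$; the key algebraic point is that $\floor{j/2}^2 + \ceil{j/2}^2 \leq (j^2+1)/2 \leq (m^2+1)/2$, so the leading contribution is $2\theta m^2 = \delta/2$, leaving room for the lower-order $O(\theta) = O(\delta/m^2)$ terms. With $\theta = \delta/(4m^2)$ and $4m^2 \geq 36$ (the recursive case only arises for $m \geq 3$) all contributions sum to at most $\delta$, for the value and for the derivative alike.

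For the architecture I would proceed as follows. For the depth: concatenation satisfies $\depth(\Phi^1 \bullet \Phi^2) = \depth(\Phi^1) + \depth(\Phi^2) - 1$, the affine ``networks'' $((A^{\ch,m}_{\delta,\bullet}, b^{\ch,m}_{\delta,\bullet}, \Id))$ have depth $1$, and the full parallelization in \eqref{eq:defpsichm} has depth $2$, so $\depth(\Psi^{\ch,m}_\delta) = \depth(\Psi^{\ch,\tilde m/2}_\theta) + 1$; since $\tilde m = 2^{\ceil{\log_2 m}}$ makes $\tilde m/2$ a power of two, the base case $\depth(\Psi^{\ch,2}_\delta) = 2$ and induction give $\depth(\Psi^{\ch,\tilde m/2}_\theta) = \ceil{\log_2 m}$, hence $\depth(\Psi^{\ch,m}_\delta) = \ceil{\log_2 m} + 1$. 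For the size: by Lemma~\ref{lem:tanhidprod} the hidden layers of $\idnna{1}{2}{\cdot}{\cdot}$ and of $\prodnna{2}{\cdot}{\cdot}$ have width bounded by an absolute constant $C_*$, so the full parallelization at the level with parameter $m'$ has all layer dimensions $O(m')$, and the routing matrices $A^{\ch,m'}_{\delta,1}, A^{\ch,m'}_{\delta,2}$ have, respectively, exactly one and at most two nonzero entries per row, so the concatenations that merge them into neighbouring layers only inflate the per-row nonzero count by a bounded factor; thus the layers created at the level with parameter $m'$ carry $O(m')$ nonzero weights while the remaining layers are inherited unchanged, giving $\size(\Psi^{\ch,m}_\delta) \leq \size(\Psi^{\ch,\tilde m/2}_\theta) + C' m$. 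Unwinding, the sequence of parameters is $m_0 = m$ followed, for $i \geq 1$, by $m_i = 2^{\ceil{\log_2 m} - i}$ (a power of two from the first step on), whence $\sum_i m_i < m + 2m = 3m$; summing over the $\ceil{\log_2 m} + 1$ levels yields $\size(\Psi^{\ch,m}_\delta) \leq C m$ with $C$ independent of $m$ and $\delta$, and by construction the hidden-layer weights depend only on $m$ and $\delta$, not on any function values.

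The main obstacle is the constant bookkeeping in the inductive derivative estimate: a naive product- and chain-rule bound leaves a constant strictly larger than $1$ in front of $\delta$, and one genuinely needs the identity $\floor{j/2}^2 + \ceil{j/2}^2 \leq (j^2+1)/2$ -- rather than the crude $(m/2)^2 + ((m+1)/2)^2$ -- to pin the leading term at $2\theta m^2 = \delta/2$, after which the specific constant $4$ in $\theta = \delta/(4m^2)$, together with $m \geq 3$, absorbs the remainder. The secondary, more routine point is verifying that concatenation with the sparse routing matrices $A^{\ch,m}_{\delta,\bullet}$ does not spoil the $O(m)$ weight count across the $\ceil{\log_2 m}+1$ levels.
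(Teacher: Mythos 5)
Your proposal is correct and follows essentially the same inductive route as the paper: base case $m=2$, then a doubling step with $\theta = \delta/(4m^2)$ propagated through the identity and product subnetworks of Lemma~\ref{lem:tanhidprod}, plus the same depth- and size-counting argument using the sparsity of the routing matrices and the geometric decay $\sum_i m_i \leq 3m$. The only cosmetic difference is in the derivative estimate, where the paper rewrites $4\floor{j/2}^2+4\ceil{j/2}^2 + c = 4j^2 - 8\floor{j/2}\ceil{j/2} + c \leq 4j^2$ and concludes with $4m^2\theta = \delta$, while you bound $\floor{j/2}^2+\ceil{j/2}^2 \leq (j^2+1)/2$ to isolate a leading $\delta/2$ and absorb the $O(\theta)$ residual via $4m^2 \geq 36$; both close the induction.
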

\begin{proof}
This proof is by induction with respect to the number of hidden layers,
which equals $\ceil{ \log_2(m) }$.
In Step 1, we treat the case $m=2$, for which we will use one hidden layer.
Then, for $m>2$ and $\tilde{m} := \min\{ 2^k : 2^k \geq m, k\in\N \}$,
assuming that the result has been shown for $\tilde{m}/2 < m$,
we prove the statements for $m$, increasing the number of hidden layers by one.
In Step 2, we give the error estimates, 
and in Step 3, we analyze the network depth and size.

\textbf{Step 1.}
For $m=2$, we have 
\begin{align*}
\norm[W^{1,\infty}((-1,1))]{ T_1 - \realiz( \Psi^{\ch,2}_\delta )_1 } 
	= \norm[W^{1,\infty}((-1,1))]{ \Id_\R - \realiz( \idnna{1}{2}{\delta}{1} ) }
	\leq \delta
,
\end{align*}
as well as
\begin{align*}
\norm[L^\infty((-1,1))]{ T_2 - \realiz( \Psi^{\ch,2}_\delta )_2 } 
	= &\, \norm[L^\infty((-1,1))]{ (2x^2-1) 
	- \big( 2 \realiz( \prodnna{2}{\delta/4}{1} )(x,x) - 1 \big) }
	\\
	= &\, 2 \norm[L^\infty((-1,1))]{ x^2 - \realiz( \prodnna{2}{\delta/4}{1} )(x,x) }
	\leq 2 \delta / 4 
	\leq \delta
\end{align*}
and
\begin{align*}
&\,
\norm[L^\infty((-1,1))]{ T_2' - \realiz( \Psi^{\ch,2}_\delta )_2' } 
	\\
	= &\, \norm[L^\infty((-1,1))]{ (2x^2-1)' 
	- \big( 2 \realiz( \prodnna{2}{\delta/4}{1} )(x,x) - 1 \big)' }
	\\
	\leq &\, \norm[L^\infty((-1,1))]{ 4x 
		- 2 [D\realiz( \prodnna{2}{\delta/4}{1} )]_1(x,x) - 2 [D\realiz( \prodnna{2}{\delta/4}{1} )]_2(x,x) }
	\\
	\leq &\, 2 \delta / 4 + 2 \delta / 4 
	\leq \delta,
\end{align*}
where $[D\realiz( \prodnna{2}{\delta/4}{1} )]_j$ denotes the derivative with respect to the $j$-th argument.
For the depth we obtain
with repeated application of the formula for the depth from Definition \ref{def:pvconc}
that
$\depth( \Psi^{\ch,2}_\delta ) = 1 -1 +2 -1 +1 = 2$.
To estimate the network size,
let $C_*>0$ be as in Lemma \ref{lem:tanhidprod},
such that $\size( \idnna{1}{2}{\delta}{1} ) \leq 2C_*$ and $\size( \prodnna{2}{\delta/4}{1} ) \leq 2C_*$.
The number of neurons in the hidden layer of $\Psi^{\ch,2}_\delta$
equals that of $\FParallel{ \idnna{1}{2}{\delta}{1}, \prodnna{2}{\delta/4}{1} }$,
which is at most $2\cdot2C_*$.
Thus, in the notation of Definition \ref{def:NeuralNetworks} we have $N_0 = 1$, $N_1 \leq 4C_*$ and $N_2 = 2$,
which gives $\size( \Psi^{\ch,2}_\delta ) \leq N_1(N_0+1) + N_2(N_1+1) \leq 4C_* \cdot 2 + 2 (4C_*+1)$,
which is a constant independent of $\delta$.
Below, we will use that for $m=2$ 
the number of nonzero weights and biases in the last layer is bounded by $5 C_* m$.
This follows from $N_2(N_1+1) = 2 (4C_*+1) \leq 10 C_*$,
using that w.l.o.g. $C_*\geq1$ 
(recall that $C_*$ is an upper bound on the size of identity networks and product networks).
Similarly, 
$\size( \Psi^{\ch,2}_\delta ) 
	\leq 4C_* \cdot 2 + 2 (4C_*+1) 
	\leq 18 C_* 
	\leq 9 C_* m$.

\textbf{Step 2.}
Let now $m\in\N$, $m>2$.

\textbf{Step 2a.}
We first estimate the emulation error in the $L^\infty(-1,1)$-norm.
The arguments which we use closely follow the proof of \cite[Proposition 7.2.2]{JOdiss}.
We use the shorthand notation 
$\Psi^{m}_{\delta} := \Psi^{\ch,m}_{\delta}$
and
$\Psi^{\tilde{m}/2}_{\theta} := \Psi^{\ch,\tilde{m}/2}_{\theta}$
already used in Definition \ref{def:univarcheb}.
From $\norm[L^\infty((-1,1))]{ T_k } = 1$ and
$\norm[L^\infty((-1,1))]{ T_k - \realiz( \Psi^{\tilde{m}/2}_{\theta} )_k } \leq \theta$
we obtain that
$\norm[L^\infty((-1,1))]{ \realiz( \Psi^{\tilde{m}/2}_{\theta} )_k } \leq 1 + \theta \leq 2$,
which means that the inputs of the identity networks and product networks in \eqref{eq:defpsichm}
are indeed bounded in absolute value by $2$,
as is necessary in order to apply the error bounds from Lemma \ref{lem:tanhidprod}.
Also, we will use that each component of 
$\realiz( \idnna{\tilde{m}/2}{2}{\theta}{2} )$
equals 
$\realiz( \idnna{1}{2}{\theta}{2} )$,
see Definition \ref{def:idmultivar}.
To simplify the notation, 
for all $k=1,\ldots,\tilde{m}/2$ 
within this proof we will abbreviate $\widetilde{T}_k = \realiz( \Psi^{\tilde{m}/2}_{\theta} )_k$.
Now, for $j\leq \tilde{m}/2$,
\begin{align*}
\norm[L^\infty((-1,1))]{ T_j - \realiz( \Psi^{m}_{\delta} )_j }
	= &\, \norm[L^\infty((-1,1))]{ T_j - \realiz( \idnna{1}{2}{\theta}{2} ) \circ \widetilde{T}_j }
	\\
	\leq &\, \norm[L^\infty((-1,1))]{ T_j - \widetilde{T}_j }
		+ \norm[L^\infty((-1,1))]{ ( \Id_\R - \realiz( \idnna{1}{2}{\theta}{2} ) ) \circ \widetilde{T}_j }
	\\
	\leq &\, \norm[L^\infty((-1,1))]{ T_j - \widetilde{T}_j }
		+ \norm[L^\infty((-2,2))]{ \Id_\R - \realiz( \idnna{1}{2}{\theta}{2} ) }
	\\
	\leq &\, \theta + \theta
	= 2 \theta
	\leq \delta
.
\end{align*}
For even $j>\tilde{m}/2$, we note that the terms $-1$ in $T_j$ and $\realiz( \Psi^{m}_{\delta} )_j$ cancel,
and obtain
\begin{align*}
\norm[L^\infty((-1,1))]{ T_j - \realiz( \Psi^{m}_{\delta} )_j }
	\leq &\, \norm[L^\infty((-1,1))]{ 2 T_{\floor{j/2}} T_{\ceil{j/2}} 
			- 2 \widetilde{T}_{\floor{j/2}} \widetilde{T}_{\ceil{j/2}} }
	\\
	&\, + \norm[L^\infty((-1,1))]{ 2 \widetilde{T}_{\floor{j/2}} \widetilde{T}_{\ceil{j/2}} 
			- 2 \realiz( \prodnna{2}{\theta}{2} )(
			\widetilde{T}_{\floor{j/2}}, 
			\widetilde{T}_{\ceil{j/2}} ) }
	\\
	\leq &\, 2 \norm[L^\infty((-1,1))]{ T_{\floor{j/2}} - \widetilde{T}_{\floor{j/2}} }
			\norm[L^\infty((-1,1))]{ T_{\ceil{j/2}} }
	\\
	&\, + 2 \norm[L^\infty((-1,1))]{ \widetilde{T}_{\floor{j/2}} }
			\norm[L^\infty((-1,1))]{ T_{\ceil{j/2}} - \widetilde{T}_{\ceil{j/2}} }
	\\
	&\, + 2 \theta
	\\
	\leq &\, 2 \cdot \theta \cdot 1 + 2 \cdot 2 \cdot \theta + 2 \cdot \theta
	= 8 \theta
	\leq \delta
.
\end{align*}
For odd $j>\tilde{m}/2$ 
\begin{align*}
\norm[L^\infty((-1,1))]{ T_j - \realiz( \Psi^{m}_{\delta} )_j }
	\leq &\, \norm[L^\infty((-1,1))]{ T_1 - \realiz( \idnna{1}{2}{\theta}{2} ) \circ \widetilde{T}_1 }
	\\
	&\, + \norm[L^\infty((-1,1))]{ 2 T_{\floor{j/2}} T_{\ceil{j/2}} 
			- 2 \widetilde{T}_{\floor{j/2}} \widetilde{T}_{\ceil{j/2}} }
	\\
	&\, + \norm[L^\infty((-1,1))]{ 2 \widetilde{T}_{\floor{j/2}} \widetilde{T}_{\ceil{j/2}} 
			- 2 \realiz( \prodnna{2}{\theta}{2} )(
			\widetilde{T}_{\floor{j/2}}, 
			\widetilde{T}_{\ceil{j/2}} ) }
	\\
	\leq &\, 2 \theta + 8 \theta = 10 \theta
	\leq \delta
.
\end{align*}

\textbf{Step 2b.}
We use the same notation as in Step 2a.
To derive $W^{1,\infty}((-1,1))$-bounds, we first recall that
$\norm[L^\infty((-1,1))]{ T_k' } = k^2$ for all $k\in\N$.
The fact that $\norm[L^\infty((-1,1))]{ T_k' } \geq k^2$
follows from $T'_m = m U_{m-1}$, which is the \Cheb polynomial of the second kind of degree $m-1$,
and $U_{m-1}(1) = m$, which is \cite[Section 1.5.1]{Gautschi2004}.
The opposite inequality 
$\norm[L^\infty((-1,1))]{ T_k' } \leq k^2$
is Markov's inequality, which holds for all polynomials of degree at most $k$.
It follows that for all $k=1,\ldots,\tilde{m}/2$ 
there holds 
$\norm[L^\infty((-1,1))]{ \widetilde{T}_k' }
	\leq \norm[L^\infty((-1,1))]{ T_k' } + \norm[L^\infty((-1,1))]{ T_k' - \widetilde{T}_k' }
	\leq k^2 + \theta
	\leq k^2 + 1$.
Now, for $j\leq \tilde{m}/2$,
\begin{align*}
&\,
\norm[L^\infty((-1,1))]{ T_j' - \realiz( \Psi^{m}_{\delta} )_j' }
	\\
	\leq &\, \norm[L^\infty((-1,1))]{ T_j' - \widetilde{T}_j' }
		+ \normc[L^\infty((-1,1))]{ \big( ( \Id_\R - \realiz( \idnna{1}{2}{\theta}{2} ) ) \circ \widetilde{T}_j \big)' }
	\\
	\leq &\, \norm[L^\infty((-1,1))]{ T_j' - \widetilde{T}_j' }
		+ \norm[L^\infty((-2,2))]{ \Id_\R' - \realiz( \idnna{1}{2}{\theta}{2} )' }
		\norm[L^\infty((-1,1))]{ \widetilde{T}_j' }
	\\
	\leq &\, \theta + \theta \cdot  ( j^2+1 )
	= (j^2+2) \theta
	\leq 3j^2 \theta
	\leq \delta
.
\end{align*}
For even $j>\tilde{m}/2$, the terms $-1$ in $T_j$ and $\realiz( \Psi^{m}_{\delta} )_j$ cancel,
and we obtain
\begin{align*}
&\,
\norm[L^\infty((-1,1))]{ T_j' - \realiz( \Psi^{m}_{\delta} )_j' }
	\\
	\leq &\, \normc[L^\infty((-1,1))]{ 2 T_{\floor{j/2}}' T_{\ceil{j/2}} 
		- 2 [ D \realiz( \prodnna{2}{\theta}{2} ) ]_1(
			\widetilde{T}_{\floor{j/2}}, 
			\widetilde{T}_{\ceil{j/2}} ) \widetilde{T}_{\floor{j/2}}' }
	\\
	&\, + \normc[L^\infty((-1,1))]{ 2 T_{\floor{j/2}} T_{\ceil{j/2}}' 
		- 2 [ D \realiz( \prodnna{2}{\theta}{2} ) ]_2(
			\widetilde{T}_{\floor{j/2}}, 
			\widetilde{T}_{\ceil{j/2}} ) \widetilde{T}_{\ceil{j/2}}' }
	\\
	\leq &\, \normc[L^\infty((-1,1))]{ 2 T_{\floor{j/2}}'
		( T_{\ceil{j/2}} - \widetilde{T}_{\ceil{j/2}} ) }
	\\
	&\, + \normc[L^\infty((-1,1))]{ 2 T_{\floor{j/2}}' - \widetilde{T}_{\floor{j/2}}' ) 
		\widetilde{T}_{\ceil{j/2}} }
	\\
	&\, + \normc[L^\infty((-1,1))]{ 2 \big( \widetilde{T}_{\ceil{j/2}}
		- [ D \realiz( \prodnna{2}{\theta}{2} ) ]_1(
			\widetilde{T}_{\floor{j/2}}, 
			\widetilde{T}_{\ceil{j/2}} ) \big) \widetilde{T}_{\floor{j/2}}' }
	\\
	&\, + \normc[L^\infty((-1,1))]{ 2 ( T_{\floor{j/2}} - \widetilde{T}_{\floor{j/2}} ) T_{\ceil{j/2}}' }
	\\
	&\, + \normc[L^\infty((-1,1))]{ 2 \widetilde{T}_{\floor{j/2}}
		 	( T_{\ceil{j/2}}' - \widetilde{T}_{\ceil{j/2}}' ) }
	\\
	&\, + \normc[L^\infty((-1,1))]{ 2 \big( \widetilde{T}_{\floor{j/2}}
		- [ D \realiz( \prodnna{2}{\theta}{2} ) ]_2(
			\widetilde{T}_{\floor{j/2}}, 
			\widetilde{T}_{\ceil{j/2}} ) \big) \widetilde{T}_{\ceil{j/2}}' }
	\\
	\leq &\, 2 \floor{j/2}^2 \cdot \theta 
		+ 2 \theta \cdot 2 
		+ 2 \theta \cdot ( \floor{j/2}^2+1 )
		+ 2 \theta \cdot \ceil{j/2}^2
		+ 2 \cdot 2 \cdot \theta
		+ 2 \theta \cdot ( \ceil{j/2}^2+1 )
	\\
	= &\, ( 4 \floor{j/2}^2 + 4 \ceil{j/2}^2 + 12 ) \theta
	=
		( 4 ( \floor{j/2} + \ceil{j/2} )^2 - 8 \floor{j/2} \ceil{j/2} + 12 ) \theta
	\\
	= &\, ( 4 j^2 - 8 \floor{j/2} \ceil{j/2} + 12 ) \theta
	\\
	\leq &\, 4 j^2 \theta
	\leq \delta
,
\end{align*}
where $[ D \realiz( \prodnna{2}{\theta}{2} ) ]_\ell$ 
denotes the derivative with respect to the $\ell$-th argument.
We used that $j > \tilde{m}/2 \geq 2$ 
and thus 
$\floor{j/2} \geq 1$ and $\ceil{j/2} \geq 2$,
such that
$8 \floor{j/2} \ceil{j/2} \geq 16$.
For odd $j>\tilde{m}/2$
\begin{align*}
&\,
\norm[L^\infty((-1,1))]{ T_j' - \realiz( \Psi^{m}_{\delta} )_j' }
	\\
	\leq &\, \normc[L^\infty((-1,1))]{ T_1' - \big( \realiz( \idnna{1}{2}{\theta}{2} ) \circ \widetilde{T}_1 \big)' }
	\\
	&\, + \normc[L^\infty((-1,1))]{ 2 T_{\floor{j/2}}' T_{\ceil{j/2}} 
		- 2 [ D \realiz( \prodnna{2}{\theta}{2} ) ]_1(
			\widetilde{T}_{\floor{j/2}}, 
			\widetilde{T}_{\ceil{j/2}} ) \widetilde{T}_{\floor{j/2}}' }
	\\
	&\, + \normc[L^\infty((-1,1))]{ 2 T_{\floor{j/2}} T_{\ceil{j/2}}'
		- 2 [ D \realiz( \prodnna{2}{\theta}{2} ) ]_2(
			\widetilde{T}_{\floor{j/2}}, 
			\widetilde{T}_{\ceil{j/2}} ) \widetilde{T}_{\ceil{j/2}}' }
	\\
	\leq &\, ( 1^2 + 2 ) \theta + ( 4 j^2 - 8 \floor{j/2} \ceil{j/2} + 12 ) \theta
	= ( 4 j^2 - 8 \floor{j/2} \ceil{j/2} + 15 ) \theta
	\\
	\leq &\, 4 j^2 \theta
	\leq \delta
.
\end{align*}

\textbf{Step 3.} 
We again consider $m\in\N$, $m>2$.

\textbf{Step 3a.}
To determine the depth, we obtain 
by repeated use of the formula for the depth from Definition \ref{def:pvconc} 
that for all $m\in\N$, $m>2$ holds
$\depth( \Psi^{\ch,m}_\delta ) = 1 -1 +2 -1 +1 -1 +\depth( \Psi^{\ch,\tilde{m}/2}_\theta ) 
	= 1 +\depth( \Psi^{\ch,\tilde{m}/2}_\theta )$.
Together with $\depth( \Psi^{\ch,2}_\delta ) = 2$,
this implies that for all $m\in 2^\N$ we have
$\depth( \Psi^{\ch,m}_\delta ) = \log_2(m)+1$,
because $\tilde{m} = m$ for all $m \in 2^\N$.
For general $m\in\N$, $m\geq2$ we obtain
$\depth( \Psi^{\ch,m}_\delta ) = 1 + ( \log_2( \tilde{m}/2 ) +1 ) = \log_2(\tilde{m}) +1 = \ceil{ \log_2(m) }+1$.

\textbf{Step 3b.}
To estimate the network size,
again let $C_*>0$ be as in Lemma \ref{lem:tanhidprod},
such that $\size( \idnna{1}{2}{\theta}{2} ) \leq 2C_*$ and $\size( \prodnna{2}{\theta}{2} ) \leq 2C_*$.
By Definition \ref{def:idmultivar},
it holds that
\begin{align*}
\FParallel{ \idnna{\tilde{m}/2}{2}{\theta}{2}, \prodnna{2}{\theta}{2}, \ldots, \prodnna{2}{\theta}{2} }
	= \FParallel{ \idnna{1}{2}{\theta}{2}, \ldots, \idnna{1}{2}{\theta}{2}, 
		\prodnna{2}{\theta}{2}, \ldots, \prodnna{2}{\theta}{2} }
	,
\end{align*}
which has depth $2$
and contains $\tilde{m}/2$ identity networks and $m - \tilde{m}/2$ product networks.
This full parallelization is defined by repeated application of Proposition \ref{prop:parallSep},
from which we see that its weight matrices are block diagonal matrices.
The number of nonzero coefficients in each submatrix
is bounded from above by the size of the subnetwork of which it is part, which is at most $2C_*$.
In particular,
each row and each column contain at most $2C_*$ nonzero coefficients.
From the definition of $A^{\ch,m}_{\delta,2}$,
we see that $\norm[0]{ A^{\ch,m}_{\delta,2} } \leq 2m$.
Denoting the last layer weight matrix and bias vector of 
$\FParallel{ \idnna{\tilde{m}/2}{2}{\theta}{2}, \prodnna{2}{\theta}{2}, \ldots, \prodnna{2}{\theta}{2} }$
by $A_2$ and $b_2$, respectively,
those of 
$((A^{\ch,m}_{\delta,2},b^{\ch,m}_{\delta,2},\Id_{\R^m}))
	\bullet \FParallel{ \idnna{\tilde{m}/2}{2}{\theta}{2}, \prodnna{2}{\theta}{2}, \ldots, \prodnna{2}{\theta}{2} }$
equal $A^{\ch,m}_{\delta,2} A_2$ and $A^{\ch,m}_{\delta,2} b_2 + b^{\ch,m}_{\delta,2}$.
Because in the matrix multiplication $A^{\ch,m}_{\delta,2} A_2$
each element of $A^{\ch,m}_{\delta,2}$ gets multiplied with all elements of a row of $A_2$,
of which at most $2C_*$ are nonzero,
we obtain that 
$\norm[0]{ A^{\ch,m}_{\delta,2} A_2 } 
	\leq \norm[0]{ A^{\ch,m}_{\delta,2} } 2C_*
	\leq 2 m 2C_*
	= 4 C_* m$.
Also, 
$\norm[0]{ A^{\ch,m}_{\delta,2} b_2 + b^{\ch,m}_{\delta,2} }
	\leq m$ because this is a vector in $\R^m$,
so the total number of nonzero coefficients in the last layer is bounded by $( 4C_* +1 ) m \leq 5 C_* m$ 
(w.l.o.g. $C_*\geq1$).
Denoting the first layer weight matrix and bias vector of 
$\FParallel{ \idnna{\tilde{m}/2}{2}{\theta}{2}, \prodnna{2}{\theta}{2}, \ldots, \prodnna{2}{\theta}{2} }$
by $A_1$ and $b_1$, respectively,
recall that each row and each column of $A_1$ contain at most $2C_*$ nonzero coefficients.
From the definition of $A^{\ch,m}_{\delta,1}$,
we see that each column has at most $5$ nonzero coefficients,
namely one for which $i = j$ 
and at most four for which $i > \tilde{m}/2$ and $j = \tilde{m}/4 + \ceil{ \tfrac{ i - 1 - \tilde{m}/2 }{4} }$.
The $j$-th column of $A_1 A^{\ch,m}_{\delta,1}$ 
is the sum of columns of $A_1$ 
multiplied with elements of the $j$-th column of $A^{\ch,m}_{\delta,1}$.
Each column of $A_1$ contains at most $2C_*$ nonzero coefficients
and each column of $A^{\ch,m}_{\delta,1}$ at most $5$,
thus each column of $A_1 A^{\ch,m}_{\delta,1}$ at most $10C_*$.
Now, the weight matrix in the second to last layer of 
$\Psi^{m}_{\delta}$
is the product of $A_1 A^{\ch,m}_{\delta,1}$
with the weight matrix of the last layer of $\Psi^{\tilde{m}/2}_{\theta}$.
Each element of that matrix is multiplied with all coefficients in one column of $A_1 A^{\ch,m}_{\delta,1}$,
of which at most $10C_*$ are nonzero.
As shown above, the number of nonzero weights and biases 
in the last layer of $\Psi^{\tilde{m}/2}_{\theta}$ is at most $5 C_* \tilde{m}/2 \leq 5 C_* m$,
which means that the total number of nonzero weights
in the second to last layer of $\Psi^{m}_{\delta}$ is at most $50 C_*^2 m$.
The bias vector in that layer has $2m-\tilde{m}/2 \leq 2m$ entries,
thus the total number of nonzero weights and biases
in the second to last layer of $\Psi^{m}_{\delta}$ is at most $52 C_*^2 m$
(w.l.o.g. $C_*\geq1$).
All layers of $\Psi^{m}_{\delta}$ except the last two
are identical to those of $\Psi^{\tilde{m}/2}_{\theta}$.
Thus, we find that 
$\size( \Psi^{m}_{\delta} ) 
	\leq 5 C_* m + 52 C_*^2 m + \size( \Psi^{\tilde{m}/2}_{\theta} ) 
	\leq 57 C_*^2 m + \size( \Psi^{\tilde{m}/2}_{\theta} )$.
For all $m\in 2^\N$, it holds that $\tilde{m} = m$
and we find by induction that
$\size( \Psi^{m}_{\delta} ) \leq 114 C_*^2 m$, because 
$\size( \Psi^{m}_{\delta} ) 
	\leq 57 C_*^2 m + 114 C_*^2 (\tilde{m}/2) 
	= 114 C_*^2 m$.
For general $m\in\N$, $m>2$, we obtain
$\size( \Psi^{m}_{\delta} ) 
	\leq 57 C_*^2 m + 114 C_*^2 (\tilde{m}/2) 
	\leq 57 C_*^2 m + 114 C_*^2 m 
	= 171 C_*^2 m$
and we recall that for $m=2$ holds
$\size( \Psi^{2}_{\delta} ) 
	\leq 9 C_* m$.
\end{proof}

In \cite[Lemma 3.2]{DLM2021}, 
a shallow $\tanh$ network is constructed
which approximates all univariate monomials of degree $1,\ldots,m$
and has width bounded by $C m$ for some constant $C>0$.
This implies that the size 
of the NN constructed in \cite[Lemma 3.2]{DLM2021}
is $O(m^2)$.
It does not imply that 
the size of that network can be bounded by a constant times $m$. 
\begin{corollary}
\label{cor:chebsums}
For all $p\in\N$ and $v\in\mathcal{P}_p$,
let 
$v = \sum_{\ell=0}^p v_\ell T_\ell$ denote the \Cheb expansion of $v$.

Then, for all $\delta\in(0,1)$, 
there exists a $\tanh$ NN $\Phi^{v,p}_{\delta}$
which satisfies
\begin{align*}
\norm[W^{1,\infty}((-1,1))]{ v - \realiz( \Phi^{v,p}_{\delta} ) } 
	\leq \delta \sum_{\ell=1}^p \snorm{v_\ell},
\end{align*}
and, for some constant $C>0$ which is 
independent of $p$, $\delta$ and of $v$, 
\[
\depth( \Phi^{v,p}_{\delta} ) = \ceil{ \log_2(p) }+1\;,\;\; 
\size( \Phi^{v,p}_{\delta} ) \leq C p.
\]
The hidden layer weights and biases only depend on $p$ and $\delta$
and are independent of $v$.
Those in the output layer are linear combinations of $(v_\ell)_{\ell=0}^p$.
\end{corollary}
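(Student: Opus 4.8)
The plan is to build $\Phi^{v,p}_\delta$ from the \Cheb{}-emulation network $\Psi^{\ch,p}_\delta$ of Proposition~\ref{prop:univarcheb} by appending a single affine output map that assembles the \Cheb{} expansion of $v$. For $p\geq 2$, I would take the depth-$1$ network $\Lambda$ with weight matrix $(v_1,\ldots,v_p)\in\R^{1\times p}$ and bias $v_0\in\R$, so that $\realiz(\Lambda)(y_1,\ldots,y_p) = v_0 + \sum_{\ell=1}^p v_\ell y_\ell$, and set $\Phi^{v,p}_\delta := \Lambda \bullet \Psi^{\ch,p}_\delta$ via Definition~\ref{def:pvconc} (the input dimension $p$ of $\Lambda$ matches the output dimension $p$ of $\Psi^{\ch,p}_\delta$). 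By definition of concatenation, $\realiz(\Phi^{v,p}_\delta) = v_0 + \sum_{\ell=1}^p v_\ell\,\realiz(\Psi^{\ch,p}_\delta)_\ell$. The degenerate case $p=1$ is trivial: then $v = v_0 + v_1 T_1$ is affine, the claimed depth is $\ceil{\log_2(1)}+1 = 1$, and the depth-$1$ network $x\mapsto v_0 + v_1 x$ realizes $v$ exactly with error $0\leq \delta\snorm{v_1}$; so I would assume $p\geq 2$ from now on.

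For the error bound I would use $T_0\equiv 1$ and linearity: since $v = v_0 T_0 + \sum_{\ell=1}^p v_\ell T_\ell$, the constant term cancels and, by the triangle inequality in $W^{1,\infty}((-1,1))$,
\begin{align*}
\normc[W^{1,\infty}((-1,1))]{ v - \realiz(\Phi^{v,p}_\delta) }
	&= \normc[W^{1,\infty}((-1,1))]{ \sum_{\ell=1}^p v_\ell \big( T_\ell - \realiz(\Psi^{\ch,p}_\delta)_\ell \big) }
	\\
	&\leq \sum_{\ell=1}^p \snorm{v_\ell}\, \normc[W^{1,\infty}((-1,1))]{ T_\ell - \realiz(\Psi^{\ch,p}_\delta)_\ell }
	\leq \delta \sum_{\ell=1}^p \snorm{v_\ell},
\end{align*}
where the last step applies the $W^{1,\infty}((-1,1))$ emulation estimate of Proposition~\ref{prop:univarcheb} to each component $\ell=1,\ldots,p$.

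For the depth and size, Definition~\ref{def:pvconc} gives $\depth(\Phi^{v,p}_\delta) = 1 + \depth(\Psi^{\ch,p}_\delta) - 1 = \ceil{\log_2(p)}+1$, and all layers of $\Phi^{v,p}_\delta$ except the last coincide with those of $\Psi^{\ch,p}_\delta$. The last weight matrix becomes $(v_1,\ldots,v_p)\,A_L$, where $A_L$ denotes the output weight matrix of $\Psi^{\ch,p}_\delta$; this is a $1\times N_{L-1}$ matrix with at most $N_{L-1}$ nonzero entries, and $N_{L-1}\leq \size(\Psi^{\ch,p}_\delta)\leq Cp$ by Proposition~\ref{prop:univarcheb} (after discarding neurons with vanishing weights, cf.~\cite[Lemma~G.1]{PV2018}), while the new bias is a single real number. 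Hence $\size(\Phi^{v,p}_\delta)\leq \size(\Psi^{\ch,p}_\delta) + Cp \leq Cp$. The structural claims are then immediate: the hidden-layer parameters of $\Phi^{v,p}_\delta$ are exactly those of $\Psi^{\ch,p}_\delta$, which by its construction in Definition~\ref{def:univarcheb} depend only on $p$ and $\delta$, whereas the output-layer parameters $(v_1,\ldots,v_p)A_L$ and $v_0 + (v_1,\ldots,v_p)b_L$ are linear combinations of $(v_\ell)_{\ell=0}^p$.

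I do not expect a genuine obstacle here, since Proposition~\ref{prop:univarcheb} does the heavy lifting; the only point requiring a little care is the size bookkeeping for the merged output layer, i.e.\ checking that left-multiplying $A_L$ by a dense $1\times p$ row does not inflate the nonzero count beyond $O(p)$ — which holds precisely because the penultimate layer of $\Psi^{\ch,p}_\delta$ has only $O(p)$ neurons.
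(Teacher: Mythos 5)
Your proposal is correct and follows essentially the same route as the paper: form $\Phi^{v,p}_\delta$ by concatenating a single affine output layer $(v_1,\ldots,v_p)$ with bias $v_0$ onto $\Psi^{\ch,p}_\delta$, use linearity and the componentwise $W^{1,\infty}$ bound from Proposition~\ref{prop:univarcheb} for the error, and keep the lower layers unchanged. The only (harmless) difference is in the size bookkeeping: the paper bounds the nonzero count of the merged output matrix by the number of nonzero \emph{columns} of the original output matrix, i.e. $\norm[0]{A}\leq \norm[0]{A^{\ch,p}_\delta}$, giving $\size(\Phi^{v,p}_\delta)\leq \size(\Psi^{\ch,p}_\delta)+1$, whereas you bound it by the penultimate-layer width $N_{L-1}$ and invoke \cite[Lemma~G.1]{PV2018} to relate width to size — looser by a constant factor but still $O(p)$.
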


\begin{proof}
For $p=1$, $v$ is an affine function, 
which can be realized exactly by a NN of depth $1$ and size at most $2$.

For $p\geq2$, 
we use the NN $\Psi^{\ch,p}_{\delta}$ 
from Definition \ref{def:univarcheb} and Proposition \ref{prop:univarcheb}
and define
\begin{align*}
\Phi^{v,p}_{\delta} 
	:= &\, ((A^{v,p}_{\delta},b^{v,p}_{\delta},\Id_\R)) \bullet \Psi^{\ch,p}_{\delta}
,
\end{align*}
where $A^{v,p}_{\delta} = ( v_1,\ldots,v_p ) \in \R^{1 \times p}$ 
and $b^{v,p}_{\delta} = v_0 \in \R$.
Its realization satisfies
\begin{align*}
\realiz( \Phi^{v,p}_{\delta} )(x)
	= &\, v_0+ \sum_{\ell=1}^p v_\ell \realiz( \Psi^{\ch,p}_{\delta} )_\ell(x),
	\qquad x\in\R,
	\\
\norm[W^{1,\infty}(I)]{ v - \realiz( \Phi^{v,p}_{\delta} ) }
	\leq &\, \sum_{\ell=1}^p v_\ell 
		\norm[W^{1,\infty}(I)]{ T_\ell - \realiz( \Psi^{\ch,p}_{\delta} )_\ell }
	\leq
		\sum_{\ell=1}^p \snorm{v_\ell} \delta
.
\end{align*}
For the formula for the NN depth, we compute
\[
\depth( \Phi^{v,p}_{\delta} )
	= \depth( ((A^{v,p}_{\delta},b^{v,p}_{\delta},\Id_\R)) ) - 1 + \depth( \Psi^{\ch,p}_{\delta} )
	= \depth( \Psi^{\ch,p}_{\delta} )
	= \ceil{ \log_2(p) }+1
	.
\]
To estimate the NN size, 
we observe from the Definition \ref{def:pvconc} of concatenation
that all layers of $\Phi^{v,p}_{\delta}$ except for the last layer
equal those of $\Psi^{\ch,p}_{\delta}$.
Denoting the weights and biases in the 
last layer of $\Psi^{\ch,p}_{\delta}$
by $A^{\ch,p}_{\delta}$ and $b^{\ch,p}_{\delta}$, respectively,
those in the last layer of $\Phi^{v,p}_{\delta}$ are
$A := A^{v,p}_{\delta} A^{\ch,p}_{\delta}$ and 
$b := A^{v,p}_{\delta} b^{\ch,p}_{\delta} + b^{v,p}_{\delta}$,
respectively.
Denoting by $N$ the dimension of the 
second to last layer of $\Psi^{\ch,p}_{\delta}$, 
$A \in \R^{1 \times N}$,
and each element of this matrix 
is the matrix product of 
the matrix $A^{v,p}_{\delta} \in \R^{1 \times p}$
with a column of $A^{\ch,p}_{\delta}$.
Hence 
$\norm[0]{ A } \leq \norm[0]{ A^{\ch,p}_{\delta} }$.
In addition, $b\in\R^1$, thus $\norm[0]{ b } \leq 1$.
Finally, we obtain that 
\begin{align*}
\size( \Phi^{v,p}_{\delta} )
	\leq \size( \Psi^{\ch,p}_{\delta} ) + 1
	\leq C p + 1
	\leq C p
,
\end{align*}
for a constant $C>0$ independent of $p$, $\delta$ and $v$.
The statement on the NN weights 
follows directly from the definition of $\Phi^{v,p}_{\delta}$.
\end{proof}
 
\begin{remark}\label{rmk:Fast}
By \cite[Theorem 3.13]{Rivlin1974}, 
one can efficiently compute numerically the \Cheb coefficients $( v_\ell )_{\ell=0}^p$
using the inverse fast Fourier transform.
The sum of their absolute values grows at most algebraically with $p$
as we have the upper bound
$\sum_{\ell=2}^p \snorm{ v_\ell } \leq p^4 \norm[L^\infty(I)]{ v }$.
For more details, see \cite[Section 2]{OS2023}.
\end{remark}

\section{General Activation Functions}
\label{sec:GenAct}

The results in the present paper considered specifically
$\e$-uniform DNN emulation rates for the solution set 
$\{ u^\e: 0<\e\leq 1\}\subset H^1_0(I)$ of 
\eqref{eq:de}--\eqref{eq:bc} 
by strict ReLU, spiking, and $\tanh$-activated deep NNs.
Lemma~\ref{lem:tanhidprod}, the key result in the proofs
of expression rate bounds for $\tanh$-DNNs, 
holds more generally,
as we state in Lemma \ref{lem:prodnetsactiv} below.
Based on this, a result similar to 
Theorem \ref{thm:nntanhsingpert} follows 
also for more general activations,
such as the sigmoid introduced in Remark \ref{rem:sigmoidbl}.

We prepare the proof of the extension of 
Lemma~\ref{lem:tanhidprod} with two auxiliary lemmas.
Throughout this section, 
we will use the calculus of NNs from Section \ref{sec:nncalculus}.
Those results hold regardless of the used activation function.
\begin{lemma}
\label{lem:idnetc1}
For a nonempty, connected, open subset $U\subset\R$,
consider an activation function $\varrho \in C^1(U) \setminus \mathcal{P}_{0}$.

For all $M\geq 1$ and $\tau>0$,
there exists a depth $2$ $\varrho$-network $\Phi^{\Id,\varrho}_{\tau,M}$
satisfying 
$\realiz(\Phi^{\Id,\varrho}_{\tau,M})(0) = 0$ 
and
$\norm[W^{1,\infty}((-M,M))]{ \Id - \realiz(\Phi^{\Id,\varrho}_{\tau,M}) } \leq \tau$.
Its number of neurons and network size are bounded independently of $\tau$ and $M$.
The dimension of the hidden layer is $1$.
\end{lemma}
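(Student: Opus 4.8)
The plan is to emulate $\Id$ by a first-order difference quotient of $\varrho$ centered at a point where the derivative does not vanish. Since $\varrho \in C^1(U)\setminus\mathcal{P}_0$ and $U$ is open and connected, $\varrho$ is non-constant on $U$, so $\varrho'$ does not vanish identically; I therefore fix $t_0\in U$ with $\varrho'(t_0)\neq 0$, together with $\eta>0$ such that $[t_0-\eta,t_0+\eta]\subset U$.

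For a scale parameter $h\in(0,\eta/M)$ to be fixed later, I would take the depth-$2$ $\varrho$-network with a single hidden neuron
\[
\Phi^{\Id,\varrho}_{\tau,M} := \left( \left( h,\, t_0 \right),\ \left( \tfrac{1}{h\,\varrho'(t_0)},\ -\tfrac{\varrho(t_0)}{h\,\varrho'(t_0)} \right) \right),
\]
whose realization on $(-M,M)$ is $\realiz(\Phi^{\Id,\varrho}_{\tau,M})(x) = \big(\varrho(t_0+hx)-\varrho(t_0)\big)/\big(h\,\varrho'(t_0)\big)$. This is well defined because $t_0+hx\in[t_0-\eta,t_0+\eta]\subset U$ for all $|x|<M$, by the choice of $h$. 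The network has input, hidden and output dimension $1$, at most four nonzero parameters (so $\size(\Phi^{\Id,\varrho}_{\tau,M})\le 4$, independently of $\tau$ and $M$), and $\realiz(\Phi^{\Id,\varrho}_{\tau,M})(0)=0$ by construction; hence all the structural claims are immediate, and it remains only to verify the accuracy.

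For the error, I would apply the mean value theorem: for each $x\in(-M,M)$ there is a point $\xi_x$ between $t_0$ and $t_0+hx$ with $\realiz(\Phi^{\Id,\varrho}_{\tau,M})(x)-x = x\big(\varrho'(\xi_x)/\varrho'(t_0)-1\big)$, while differentiating directly gives $\realiz(\Phi^{\Id,\varrho}_{\tau,M})'(x)-1 = \big(\varrho'(t_0+hx)-\varrho'(t_0)\big)/\varrho'(t_0)$. Since $|\xi_x-t_0|\le hM$ and $|hx|\le hM$, both right-hand sides are bounded in absolute value by $\tfrac{M}{|\varrho'(t_0)|}\,\omega(hM)$, where $\omega(\delta):=\sup_{|s|\le\delta}|\varrho'(t_0+s)-\varrho'(t_0)|$, and $\omega(hM)\to 0$ as $h\to 0^+$ by continuity of $\varrho'$ at $t_0$; here $M\ge 1$ is used only to absorb $\max\{1,M\}=M$. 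Choosing $h$ small enough that $\tfrac{M}{|\varrho'(t_0)|}\,\omega(hM)\le\tau$ then yields $\norm[W^{1,\infty}((-M,M))]{ \Id - \realiz(\Phi^{\Id,\varrho}_{\tau,M}) }\le\tau$.

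I do not foresee a substantial obstacle: the construction is explicit and the parameter count is manifestly $O(1)$. The only points needing (minor) care are keeping every pre-activation $t_0+hx$ inside $U$, which is forced by the constraint $h<\eta/M$, and making one choice of $h$ work uniformly over all of $(-M,M)$ for the full $W^{1,\infty}$ norm, which is precisely what the modulus of continuity $\omega$ delivers.
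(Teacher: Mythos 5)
Your construction, scaling, and error analysis match the paper's proof exactly: same choice of $t_0$ with $\varrho'(t_0)\neq 0$, same one-neuron difference-quotient network (your $h$ is the paper's $\delta/M$), same use of exactness at $0$ to pass from the derivative bound to the $L^\infty$ bound. The only cosmetic difference is that you phrase the smallness condition via a modulus of continuity $\omega(hM)$ while the paper directly selects $\delta$ so that $\sup_{|t-t_0|\le\delta}|1-\varrho'(t)/\varrho'(t_0)|\le\tau/M$; these are the same step.
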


\begin{proof}
Let $t_0\in U$ be such that $\varrho'(t_0) \neq 0$.
There exists $\delta>0$ such that 
$[t_0 - \delta, t_0 + \delta] \subset U$
and 
$\max_{t\in [t_0 - \delta, t_0 + \delta]} \snorm{ 1 - \varrho'(t) / \varrho'(t_0) } 
	\leq \tau / M$.

Now, let 
$$
\Phi^{\Id,\varrho}_{\tau,M} 
:= \left(\left( \tfrac{\delta}{M},t_0 \right),
	\left( \tfrac{M}{\delta \varrho'(t_0)},-\tfrac{M \varrho(t_0)}{\delta \varrho'(t_0)} \right)\right),
$$
which has depth $2$, size $4$, hidden layer width $1$, 
and realization
$$
\realiz(\Phi^{\Id,\varrho}_{\tau,M})(x) 
	= \tfrac{M}{\delta \varrho'(t_0)} \left( \varrho( t_0 + \tfrac{\delta}{M} x ) - \varrho( t_0 ) \right)
,
\qquad
x\in[-M,M].
$$
It follows that $\realiz(\Phi^{\Id,\varrho}_{\tau,M})(0) = 0$ 
and that for all $x\in[-M,M]$ there holds
\begin{align*}
\snorm{ 1 - \realiz(\Phi^{\Id,\varrho}_{\tau,M})'(x) }
	= &\, \snorm{ 1 - \varrho( t_0 + \tfrac{\delta}{M} x ) / \varrho'(t_0) }
	\leq \tau / M \leq \tau
	,
	\\
\snorm{ x - \realiz(\Phi^{\Id,\varrho}_{\tau,M})(x) }
	\leq &\, M \tau / M = \tau
	.
\end{align*}
The latter estimate follows from integrating the former one,
using exactness in $0$ of the identity network.
\end{proof}

\begin{lemma}
\label{lem:squarenetc2}
For a nonempty, connected, open subset $U\subset\R$,
consider the function
$f: \R\to\R: x\mapsto x^2$
and an activation function $\varrho \in C^2(U) \setminus \mathcal{P}_{1}$.

For all $M\geq 1$ and $\tau>0$,
there exists a depth $2$ $\varrho$-network $\Phi^f_{\tau,M}$
satisfying 
$\realiz(\Phi^f_{\tau,M})(0) = 0$ 
and
$\norm[W^{1,\infty}((-M,M))]{ f - \realiz(\Phi^f_{\tau,M}) } \leq \tau$.
Its number of neurons and network size are bounded independently of $\tau$ and $M$.
\end{lemma}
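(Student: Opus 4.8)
The plan is to emulate $f(x)=x^2$ by a \emph{second-order difference quotient} of the activation $\varrho$, mirroring the first-order construction used for the identity in Lemma~\ref{lem:idnetc1}. Since $\varrho\in C^2(U)\setminus\mathcal{P}_1$ and $U$ is connected and open, $\varrho''$ cannot vanish identically on $U$; so I fix $t_0\in U$ with $\varrho''(t_0)\neq0$ and $\delta>0$ with $[t_0-\delta,t_0+\delta]\subset U$. For a step size $h\in(0,\delta/M]$ to be chosen below, I set
\begin{align*}
\Phi^f_{\tau,M}:=\Big(\big((h,-h)^{\top},(t_0,t_0)^{\top}\big),\ \big(\tfrac{1}{\varrho''(t_0)h^2}(1,1),\ -\tfrac{2\varrho(t_0)}{\varrho''(t_0)h^2}\big)\Big),
\end{align*}
a depth-$2$ $\varrho$-network with one hidden layer of width $2$ and at most seven nonzero weights and biases, whose realization is
\begin{align*}
\realiz(\Phi^f_{\tau,M})(x)=\frac{1}{\varrho''(t_0)h^2}\big(\varrho(t_0+hx)+\varrho(t_0-hx)-2\varrho(t_0)\big),\qquad x\in[-M,M].
\end{align*}
For $|x|\le M$ the points $t_0\pm hx$ lie in $[t_0-\delta,t_0+\delta]\subset U$, so this is well defined, and clearly $\realiz(\Phi^f_{\tau,M})(0)=0$.

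For the error analysis I would apply Taylor's formula with integral remainder to $g(s):=\varrho(t_0+s)+\varrho(t_0-s)$, which satisfies $g(0)=2\varrho(t_0)$, $g'(0)=0$ and $g''(s)=\varrho''(t_0+s)+\varrho''(t_0-s)$; together with the substitution $u=hw$ this gives the identity
\begin{align*}
\varrho(t_0+hx)+\varrho(t_0-hx)-2\varrho(t_0)=h^2\int_0^x(x-w)\big(\varrho''(t_0+hw)+\varrho''(t_0-hw)\big)\,dw.
\end{align*}
Since $x^2=\int_0^x(x-w)\cdot2\,dw$ and $2x=\int_0^x2\,dw$, subtracting yields, for $|x|\le M$,
\begin{align*}
\big|x^2-\realiz(\Phi^f_{\tau,M})(x)\big|&\le\frac{1}{|\varrho''(t_0)|}\int_0^{|x|}(|x|-w)\,\big|2\varrho''(t_0)-\varrho''(t_0+hw)-\varrho''(t_0-hw)\big|\,dw,\\
\big|2x-\realiz(\Phi^f_{\tau,M})'(x)\big|&\le\frac{1}{|\varrho''(t_0)|}\int_0^{|x|}\big|2\varrho''(t_0)-\varrho''(t_0+hw)-\varrho''(t_0-hw)\big|\,dw.
\end{align*}
By continuity of $\varrho''$ at $t_0$ I then pick $h\in(0,\delta/M]$ small enough that $2\sup_{|s-t_0|\le hM}|\varrho''(s)-\varrho''(t_0)|\le\tau|\varrho''(t_0)|/M^2$; using $\int_0^{|x|}(|x|-w)\,dw=x^2/2\le M^2/2$ and $M\ge1$, both right-hand sides are then $\le\tau$, i.e. $\normc[W^{1,\infty}((-M,M))]{f-\realiz(\Phi^f_{\tau,M})}\le\tau$.

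Finally, the number of neurons (two, in the single hidden layer) and the network size (at most $7$) of $\Phi^f_{\tau,M}$ are independent of $\tau$ and $M$; only the magnitudes of $h$ and of $1/(\varrho''(t_0)h^2)$ depend on $\tau$ and $M$, which does not enter $\size(\cdot)$. I do not anticipate a genuine obstacle: the only delicate point is that $h$ must be chosen as a function of both $M$ and $\tau$ via the modulus of continuity of $\varrho''$ near $t_0$, and one must carry the factor $1/|\varrho''(t_0)|$ through the two integral estimates; the cases $\varrho(t_0)=0$ or $t_0=0$ only reduce the size further.
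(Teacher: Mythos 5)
Your proof is correct, but it follows a genuinely different route from the one in the paper. You realize the square directly as a scaled \emph{symmetric second-order difference quotient} of $\varrho$,
\[
\realiz(\Phi^f_{\tau,M})(x)=\frac{\varrho(t_0+hx)+\varrho(t_0-hx)-2\varrho(t_0)}{\varrho''(t_0)\,h^2},
\]
so that both hidden neurons sample $\varrho$ symmetrically around a single node $t_0$ with $\varrho''(t_0)\neq 0$, and you control the $W^{1,\infty}$-error via a Taylor expansion with integral remainder of $g(s)=\varrho(t_0+s)+\varrho(t_0-s)$ together with the continuity of $\varrho''$ near $t_0$. The paper proceeds instead ``by integration'': it first invokes Lemma~\ref{lem:idnetc1} with activation $\varrho'$ to produce a depth-$2$ $\varrho'$-identity network $\Phi^{\Id,\varrho'}_{\tau/(4M),M}$ with realization $a^{(1)}_2\varrho'(a^{(1)}_1x+b^{(1)}_1)+b^{(1)}_2$, then builds a $\varrho$-network whose derivative reproduces twice this quantity, treating the affine correction $2b^{(1)}_2 x$ that arises from the constant $b^{(1)}_2$ by means of a second identity network $\Phi^{\Id,\varrho}_{\tau',M}$, now with activation $\varrho$. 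Both constructions produce a depth-$2$ network with a single hidden layer of width two and size at most $7$, independently of $\tau$ and $M$. What your approach buys is a more self-contained and symmetric argument (a single center $t_0$, a single step size $h$, one Taylor estimate controlling both the value and the derivative error); what the paper's approach buys is modularity, as it reuses Lemma~\ref{lem:idnetc1} twice in a first-order fashion, at the cost of a slightly more intricate assembly of the output-layer weights and biases. One small point worth spelling out in your write-up: the step size should be taken as $h=\min\{\delta/M,\,r/M\}$, where $r>0$ is supplied by the continuity of $\varrho''$ at $t_0$, so that simultaneously $t_0\pm hx\in U$ for all $|x|\le M$ and the modulus-of-continuity bound you impose holds; this is what is implicit in your phrase ``pick $h\in(0,\delta/M]$ small enough''.
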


\begin{proof}
Let $\Phi^{\Id,\varrho'}_{\tau/(4M),M}$ 
be the identity network from Lemma \ref{lem:idnetc1}
with activation function $\varrho'$.
Because its hidden layer dimension is $1$,
all its weight matrices are of dimension $1\times 1$ 
and its bias vectors are of dimension $1$.
We identify them with real numbers.
We write
$\Phi^{\Id,\varrho'}_{\tau/(4M),M}
	= ((a^{(1)}_1,b^{(1)}_1),(a^{(1)}_2,b^{(1)}_2))$,
such that
$\realiz( \Phi^{\Id,\varrho'}_{\tau/(4M),M} )(x) 
	= a^{(1)}_2 \varrho'( a^{(1)}_1 x + b^{(1)}_1 ) + b^{(1)}_2$
for all $x\in[-M,M]$.

The idea of this proof is to construct a $\varrho$-NN 
which approximates the antiderivative of $\realiz(\Phi^{\Id,\varrho'}_{\tau/(4M),M})$,
multiplied by $2$.
To approximate the antiderivative of the constant term, 
we use an identity network with activation function $\varrho$.
Let 
$\Phi^{\Id,\varrho}_{\tau',M}$
be the identity network from Lemma \ref{lem:idnetc1}
with accuracy $\tau' = \tau/(4 M b^{(1)}_2)$.
We write
$\Phi^{\Id,\varrho}_{\tau',M} = ((a^{(0)}_1,b^{(0)}_1),(a^{(0)}_2,b^{(0)}_2))$.

Now, define 
$\Phi^f_{\tau,M}
	:= ((A_1,b_1),(A_2,b_2))$,
where 
\begin{gather*}
A_1 = ( a^{(1)}_1, a^{(0)}_1 )^\top,
\quad
b_1 = ( b^{(1)}_1, b^{(0)}_1 )^\top,
\quad
A_2 = ( \tfrac{ 2 a^{(1)}_2 }{ a^{(1)}_1 }, 2 b^{(1)}_2 a^{(0)}_2 ),
\\
b_2 = 2 b^{(1)}_2 b^{(0)}_2
	-  \Big( \tfrac{ 2 a^{(1)}_2 }{ a^{(1)}_1 } \varrho( b^{(1)}_1 ) + 2 b^{(1)}_2 \left( a^{(0)}_2 \varrho( b^{(0)}_1 ) + b^{(0)}_2 \right) \Big)
,
\end{gather*}
so that for all $x\in[-M,M]$ there holds
\begin{align*}
\realiz( \Phi^f_{\tau,M} )(x)
	= &\, \tfrac{ 2 a^{(1)}_2 }{ a^{(1)}_1 } \varrho( a^{(1)}_1 x + b^{(1)}_1 ) 
		+ 2 b^{(1)}_2 \realiz( \Phi^{\Id,\varrho}_{\tau',M} )(x)
	\\
	&\, - \Big( \tfrac{ 2 a^{(1)}_2 }{ a^{(1)}_1 } \varrho( b^{(1)}_1 ) 
		+ 2 b^{(1)}_2 \realiz( \Phi^{\Id,\varrho}_{\tau',M} )(0) \Big)
.
\end{align*}
We see that $\realiz( \Phi^f_{\tau,M} )(0) = 0$
and that
\begin{align*}
\realiz( \Phi^f_{\tau,M} )'(x)
	= &\, 2 a^{(1)}_2 \varrho( a^{(1)}_1 x + b^{(1)}_1 ) 
		+ 2 b^{(1)}_2 \realiz( \Phi^{\Id,\varrho}_{\tau',M} )'(x)
	\\
	= &\, 2 \realiz(\Phi^{\Id,\varrho'}_{\tau/(4M),M})(x) 
		+ 2 b^{(1)}_2 ( \realiz( \Phi^{\Id,\varrho}_{\tau',M} )'(x) - 1 )
	,
	\\
\snorm{ 2 x - \realiz( \Phi^f_{\tau,M} )'(x) }
	\leq &\,    \snorm{ 2 x - 2 \realiz(\Phi^{\Id,\varrho'}_{\tau/(4M),M})(x) }
		+ 2 b^{(1)}_2 \snorm{ \realiz( \Phi^{\Id,\varrho}_{\tau',M} )'(x) - 1 }
	\\
	\leq &\, 2 \tfrac{\tau}{4M} + 2 b^{(1)}_2 \tau'
	= \tfrac{\tau}{2M} + \tfrac{\tau}{2M} 
	= \tau/M
.
\end{align*}
Integrating this error bound gives
$\snorm{ x^2 - \realiz( \Phi^f_{\tau,M} )(x) }
	\leq \tau$ 
for all $x\in[-M,M]$.
The network has depth $2$, 
one hidden layer comprising two neurons,
and size $7$, independently of $\tau$ and $M$.
\end{proof}

\begin{lemma}
\label{lem:prodnetsactiv}
Lemma \ref{lem:tanhidprod} also holds 
if $\tanh$ is replaced by any activation function $\varrho \in C^2(U)\setminus\mathcal{P}_1$ 
for a nonempty, connected, open subset $U\subset\R$.

In addition, it holds that $\realiz( \idnna{1}{L}{\tau}{M} )(0) = 0$
and that $x_1 x_2 = 0$ implies $\realiz( \prodnna{2}{\tau}{M} )(x_1,x_2) = 0$.
\end{lemma}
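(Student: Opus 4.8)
The plan is to repeat the proof of Lemma~\ref{lem:tanhidprod}, with the shallow $\tanh$-constructions imported there from~\cite{DLM2021} replaced by the $\varrho$-networks furnished by Lemmas~\ref{lem:idnetc1} and~\ref{lem:squarenetc2}. Two preliminary remarks are needed. First, a $\varrho$-network accurate on $(-1,1)$ is \emph{a fortiori} accurate on $(-M,M)$ for every $0<M\le 1$, so we may assume $M\ge 1$ throughout, which is precisely the regime covered by Lemmas~\ref{lem:idnetc1} and~\ref{lem:squarenetc2}. Second, $C^2(U)\setminus\mathcal{P}_1\subset C^1(U)\setminus\mathcal{P}_0$, so Lemma~\ref{lem:idnetc1} applies to any $\varrho\in C^2(U)\setminus\mathcal{P}_1$.

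For the identity networks, I would set $\idnna{1}{2}{\tau}{M}:=\Phi^{\Id,\varrho}_{\tau,M}$, the network from Lemma~\ref{lem:idnetc1}: it has depth $2$, hidden-layer width $1$, size bounded independently of $\tau,M$, the stated $W^{1,\infty}((-M,M))$-accuracy, and, crucially, $\realiz(\Phi^{\Id,\varrho}_{\tau,M})(0)=0$. For $L>2$ I would then define $\idnna{1}{L}{\tau}{M}:=\idnna{1}{2}{\tau/3}{M+\tau/3}\bullet\idnna{1}{L-1}{\tau/3}{M}$, exactly as in the proof of Lemma~\ref{lem:tanhidprod}. The triangle-inequality estimates used there (exploiting that $\realiz(\idnna{1}{L-1}{\tau/3}{M})$ maps $(-M,M)$ into $(-M-\tau/3,M+\tau/3)$) carry over verbatim and yield the $W^{1,\infty}((-M,M))$-bound $\tau$; concatenating $L-1$ width-$1$, bounded-size $\varrho$-networks gives $\size(\idnna{1}{L}{\tau}{M})\le CL$ with all hidden-layer widths bounded; and exactness at $0$ propagates under concatenation because $\realiz(\Phi^1\bullet\Phi^2)(0)=\realiz(\Phi^1)(\realiz(\Phi^2)(0))=\realiz(\Phi^1)(0)=0$.

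For the product network I would use the \emph{three-term} polarization identity $x_1x_2=\tfrac12\big((x_1+x_2)^2-x_1^2-x_2^2\big)$. Fix the depth-$2$ square-emulating $\varrho$-network $\Phi^f_{\tau/2,2M}$ from Lemma~\ref{lem:squarenetc2} (so $\realiz(\Phi^f_{\tau/2,2M})(0)=0$ and $\normc[W^{1,\infty}((-2M,2M))]{f-\realiz(\Phi^f_{\tau/2,2M})}\le\tau/2$ for $f(x)=x^2$), and let $\prodnna{2}{\tau}{M}$ be the depth-$2$ $\varrho$-network obtained from $\FParallel{\Phi^f_{\tau/2,2M},\Phi^f_{\tau/2,2M},\Phi^f_{\tau/2,2M}}$ by pre-composing with the affine map $(x_1,x_2)\mapsto(x_1+x_2,x_1,x_2)$ and post-composing with the read-out $(y_1,y_2,y_3)\mapsto\tfrac12(y_1-y_2-y_3)$ --- both absorbed into the outer weight matrices via Definition~\ref{def:pvconc} --- so that $\realiz(\prodnna{2}{\tau}{M})(x_1,x_2)=\tfrac12\big(\realiz(\Phi^f_{\tau/2,2M})(x_1+x_2)-\realiz(\Phi^f_{\tau/2,2M})(x_1)-\realiz(\Phi^f_{\tau/2,2M})(x_2)\big)$. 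Since all three arguments lie in $(-2M,2M)$ whenever $x_1,x_2\in(-M,M)$, the triangle inequality and the bound on $f-\realiz(\Phi^f_{\tau/2,2M})$ give $\normc[L^\infty((-M,M)^2)]{x_1x_2-\realiz(\prodnna{2}{\tau}{M})}\le\tfrac32\cdot\tfrac{\tau}{2}\le\tau$, and, writing $\partial_{x_1}(x_1x_2)=x_2=\tfrac12\big(2(x_1+x_2)-2x_1\big)$ and likewise for $\partial_{x_2}$, the two first-order partial-derivative errors are each $\le\tau/2\le\tau$; hence the $W^{1,\infty}((-M,M)^2)$-bound is $\tau$. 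The size is a constant since $\prodnna{2}{\tau}{M}$ is a fixed composition of bounded-size pieces. Finally, if $x_1x_2=0$, say $x_1=0$, then $\realiz(\prodnna{2}{\tau}{M})(0,x_2)=\tfrac12\big(\realiz(\Phi^f_{\tau/2,2M})(x_2)-\realiz(\Phi^f_{\tau/2,2M})(0)-\realiz(\Phi^f_{\tau/2,2M})(x_2)\big)=-\tfrac12\realiz(\Phi^f_{\tau/2,2M})(0)=0$, and symmetrically for $x_2=0$.

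The one step requiring care --- and the reason for choosing the three-term identity rather than the standard $x_1x_2=\tfrac14\big((x_1+x_2)^2-(x_1-x_2)^2\big)$ --- is the extra conclusion $x_1x_2=0\Rightarrow\realiz(\prodnna{2}{\tau}{M})(x_1,x_2)=0$: it fails for a generic square emulator, and it is exactly the pairing of the three-term identity (which leaves a ``$0$'' in one argument slot) with the exactness at $0$ built into Lemmas~\ref{lem:idnetc1}--\ref{lem:squarenetc2} that secures it. Everything else is a routine transcription of the $\tanh$ argument with ``$\varrho\in C^2(U)\setminus\mathcal{P}_1$'' in place of ``$\tanh$''.
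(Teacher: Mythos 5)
Your proof is correct and takes essentially the same route as the paper's: you import the $\varrho$-networks from Lemmas~\ref{lem:idnetc1} and~\ref{lem:squarenetc2}, reuse the $\tanh$ construction for the identity network, and build the product network from the same three-term polarization identity $x_1x_2=\tfrac12\big((x_1+x_2)^2-x_1^2-x_2^2\big)$ with exactness at $0$ propagated through to give the final vanishing property. The only (cosmetic) difference from the paper is the normalization: the paper scales the three arguments inward by $\tfrac12$ so the square-emulator $\Phi^f_{\tau/6,M}$ stays on $(-M,M)$ with an outer factor $2$, whereas you enlarge the domain to $(-2M,2M)$ and use $\Phi^f_{\tau/2,2M}$ with outer factor $\tfrac12$; both yield the same bounds.
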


\begin{proof}
Throughout this proof, we assume that $M\geq 1$.
If $M<1$, then we consider the networks constructed for $M=1$,
which also satisfy the statements in the lemma for $M<1$.

For $L=2$, the network $\idnna{1}{2}{\tau}{M} := \Phi^{\Id,\varrho}_{\tau,M}$
from Lemma \ref{lem:idnetc1} satisfies all the desired properties.

The definition and the analysis of identity networks of depth $L>2$ 
are identical to those in the proof of Lemma \ref{lem:tanhidprod}.
From the definition 
$\idnna{1}{L}{\tau}{M} := \idnna{1}{2}{\tau/3}{M+\tau/3} \bullet \idnna{1}{L-1}{\tau/3}{M}$
we inductively obtain that
$\realiz(\idnna{1}{L}{\tau}{M})(0) = 0$.

To construct product networks,
for all $\tau>0$, $M\geq1$,
let $\Phi^f_{\tau,M}$ be the $\varrho$-NN from Lemma \ref{lem:squarenetc2}
approximating $f:[-M,M]\to\R:x\mapsto x^2$.

Now, 
let $A_1\in\R^{3\times 2}$ be such that 
for all $x = (x_1,x_2)\in\R^2$ there holds
$A_1 x = ( \tfrac12 x_1 + \tfrac12 x_2, \tfrac12 x_1, \tfrac12 x_2 )$,
let $b_1 := 0 \in\R^3$,
$A_2 := ( 2, -2, -2 ) \in \R^{1\times 3}$
and $b_2 := 0 \in \R$.
Then, the NN 
$\prodnna{2}{\tau}{M} 
	:= ((A_2,b_2)) \bullet \Parallel{ \Phi^f_{\tau/6,M}, \Phi^f_{\tau/6,M}, \Phi^f_{\tau/6,M} } 
	\bullet ((A_1,b_1))$
has realization
$\realiz( \prodnna{2}{\tau}{M} )( x_1,x_2 )
	= 2\realiz( \Phi^f_{\tau/6,M} )( \tfrac12 x_1 + \tfrac12 x_2 ) 
		- 2\realiz( \Phi^f_{\tau/6,M} )( \tfrac12 x_1 ) - 2\realiz( \Phi^f_{\tau/6,M} )( \tfrac12 x_2 )$
for all $(x_1,x_2) \in \R^2$.
With $\realiz( \Phi^f_{\tau/6,M} )(0) = 0$ we obtain that 
$\realiz( \prodnna{2}{\tau}{M} )( x_1,0 )
	= 0
	= \realiz( \prodnna{2}{\tau}{M} )( 0,x_2 )
$
for all $x_1,x_2\in[-M,M]$.
To estimate the $L^\infty((-M,M)^2)$ error,
for all $(x_1,x_2)\in[-M,M]^2$ there holds
\begin{align*}
\snorm{ x_1 x_2 - \realiz( \prodnna{2}{\tau}{M} )(x_1,x_2) }
	\leq &\, 2 \snorm{ ( \tfrac12 x_1 + \tfrac12 x_2 )^2
		- \realiz( \Phi^f_{\tau/6,M} )( \tfrac12 x_1 + \tfrac12 x_2 ) }
	\\
	&\, + 2 \snorm{ ( \tfrac12 x_1 )^2 - \realiz( \Phi^f_{\tau/6,M} )( \tfrac12 x_1 ) }
	\\
	&\, + 2 \snorm{ ( \tfrac12 x_2 )^2 - \realiz( \Phi^f_{\tau/6,M} )( \tfrac12 x_2 ) }
	\\
	\leq &\, 6 \tau/6 
	= \tau
.
\end{align*}
For the error in the derivative with respect to $x_1$, we find that
for all $(x_1,x_2)\in[-M,M]^2$ there holds
\begin{align*}
\snorm{ x_2 - \tfrac{\partial}{\partial x_1} \realiz( \prodnna{2}{\tau}{M} )(x_1,x_2) }
	\leq &\, \snormc{ ( x_1 + x_2 )
		- 2 \realiz( \Phi^f_{\tau/6,M} )' ( \tfrac12 x_1 + \tfrac12 x_2 ) \cdot \tfrac12 }
	\\
	&\, + \snormc{ x_1 - 2 \realiz( \Phi^f_{\tau/6,M} )' ( \tfrac12 x_1 ) \cdot \tfrac12 }
		+ 0
	\\
	\leq &\, 2 \tau/6 
	\leq \tau
,
\end{align*}
where the factors $\tfrac12$ are due to the chain rule.
The analogous bounds for differentiation with respect to $x_2$ also hold.

Because the number of neurons of $\Phi^f_{\tau/6,M}$ 
is bounded independently of $M$ and $\tau$,
so is that of $\prodnna{2}{\tau}{M}$.
Denoting by $N_1$ the number of neurons in the hidden layer of $\prodnna{2}{\tau}{M}$,
because the input dimension is $N_0 = 2$ 
and the output dimension is $N_2=1$,
the total number of nonzero weights and biases is at most 
$\sum_{\ell=1}^2 N_\ell (N_{\ell-1}+1) = 4N_1+1$,
which is independent of $M$ and $\tau$.
\end{proof}

\begin{remark}
\label{rem:sigmoidappx}
As a result of Lemma \ref{lem:prodnetsactiv}, 
all constructions and results in 
Section \ref{sec:nntanhidprod}, 
Appendix \ref{sec:nntanhunivarcheb} 
and Section \ref{sec:nntanhanal}
also hold for NNs 
whose activation function satisfies the conditions of Lemma \ref{lem:prodnetsactiv},
which includes ReLU$^2$ and the sigmoid defined in Remark \ref{rem:sigmoidbl}.
\end{remark}

With the approximation of 
the exponential boundary layer functions by sigmoid NNs in Remark \ref{rem:sigmoidbl}, 
the main result of Section \ref{sec:tanhanal} also holds for sigmoid NNs.

\begin{theorem}
\label{thm:nnsigmoidsingpert}
Theorem \ref{thm:nntanhsingpert} also holds for sigmoid NNs.
\end{theorem}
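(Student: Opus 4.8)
The plan is to re-run the proof of Theorem~\ref{thm:nntanhsingpert} given in Section~\ref{sec:PrfMainRslt} essentially verbatim, replacing every $\tanh$-specific building block by its sigmoid counterpart, all of which have already been supplied above. Concretely: the shallow exponential emulator $\Phi^{\exp}_\tau$ from Lemma~\ref{lem:tanhnnexp} is replaced by the sigmoid network $\Phi^{\sigma,\exp}_\tau$ from Remark~\ref{rem:sigmoidbl}, which by construction satisfies $\realiz(\Phi^{\sigma,\exp}_\tau) = \realiz(\Phi^{\exp}_\tau)$ and hence obeys the same error bounds \eqref{eq:tanhnnexperr}; the identity and product networks $\idnna{1}{L}{\tau}{M}$ and $\prodnna{2}{\tau}{M}$ from Lemma~\ref{lem:tanhidprod} are replaced by the sigmoid versions from Lemma~\ref{lem:prodnetsactiv}, which enjoy the same $W^{1,\infty}$ accuracy and the same (tolerance-independent) size bounds; and the $\tanh$ emulator of analytic functions from Proposition~\ref{prop:nntanhanal} (respectively of polynomials, Corollary~\ref{cor:chebsums}) is replaced by its sigmoid analogue, whose existence with identical depth $\ceil{\log_2(p)}+1$ and size $O(p)$ is guaranteed by Remark~\ref{rem:sigmoidappx} (the \Cheb polynomial emulation of Appendix~\ref{sec:nntanhunivarcheb} and the analytic-function construction of Section~\ref{sec:nntanhanal} transfer to any activation in $C^2(U)\setminus\mathcal{P}_1$, in particular to $\sigma$).

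With these substitutions, I would follow the two-case split of the proof of Theorem~\ref{thm:nntanhsingpert}. In the asymptotic regime $\kappa p \e \geq \tfrac12$, the solution $u_{\e}$ is itself analytic with $\e$-independent analyticity constants, so the sigmoid analogue of $\Phi^{v,p}_\delta$ with $\delta = \exp(-\beta p)$ already yields $\norm[W^{1,\infty}(I)]{u_{\e} - \realiz(\Phi^{u_{\e},p}_{\e})} \leq C e^{-\beta p}$. In the pre-asymptotic regime $\kappa p \e < \tfrac12$, I would use the decomposition \eqref{eq:decompexplicit}--\eqref{eq:explicitbl}: approximate the $\e$-independent smooth part $u^S_{\e}$ by the sigmoid analytic-function network; approximate each explicit boundary layer $\tilde{u}^{\pm}_{\e}(x) = C^{\pm} e^{-(1\mp x)/\e}$ by the composite network of the form \eqref{eq:tanhbldef} with $\Phi^{\exp}_\tau$ replaced by $\Phi^{\sigma,\exp}_\tau$ and the $\tanh$ identity networks replaced by sigmoid identity networks; bound the remainder $\norm[W^{1,\infty}(I)]{u^R_{\e}} \leq C e^{-\beta p}$ exactly as in \cite{Melenk1997}; and finally add the three subnetworks using Proposition~\ref{prop:sum}.

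Because $\realiz(\Phi^{\sigma,\exp}_\tau) = \realiz(\Phi^{\exp}_\tau)$ and the sigmoid identity and product networks obey the very same quantitative $W^{1,\infty}$-accuracy and size/depth bounds as the $\tanh$ ones, every error estimate and every depth/size computation appearing in Section~\ref{sec:PrfMainRslt} is reproduced line by line, with the same constants. This gives the claimed bound $\norm[W^{1,\infty}(I)]{u_{\e} - \realiz(\Phi^{u_{\e},p}_{\e})} \leq C e^{-\beta p}$, whence \eqref{eq:tanhbalanced}--\eqref{eq:tanhlinfty}, together with $\depth(\Phi^{u_{\e},p}_{\e}) = \ceil{\log_2(p)}+1$ and $\size(\Phi^{u_{\e},p}_{\e}) \leq \tilde{C} p$ as in \eqref{eq:tanhbldepthsize}, with hidden-layer weights and biases independent of $u_{\e}$.

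The only points needing attention — and where the actual work already resides in the earlier results — are, first, that the size of the sigmoid network stays linear in $p$, which hinges on the sigmoid identity and product networks having size bounded independently of the tolerance (Lemma~\ref{lem:prodnetsactiv}), exactly as in the $\tanh$ case; and, second, that the \Cheb polynomial emulation underlying Proposition~\ref{prop:nntanhanal} genuinely transfers to $\sigma$, which is the content of Remark~\ref{rem:sigmoidappx} and holds because $\sigma \in C^\infty(\R)\setminus\mathcal{P}_1$. Beyond invoking these, no new estimate is required, so I do not anticipate a substantive obstacle.
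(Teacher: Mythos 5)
Your proof is correct and follows precisely the route the paper takes: the paper states Theorem~\ref{thm:nnsigmoidsingpert} without a displayed proof, relying on the preceding sentence plus Remark~\ref{rem:sigmoidbl} (for the exponential emulator, whose realization coincides with the $\tanh$ one) and Remark~\ref{rem:sigmoidappx} together with Lemma~\ref{lem:prodnetsactiv} (for identity, product, \Cheb, and analytic-function emulation) to transfer every building block of the proof of Theorem~\ref{thm:nntanhsingpert} with identical accuracy, depth, and size bounds. Your substitution-by-substitution verification is exactly this argument, spelled out in more detail than the paper itself provides.
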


\end{appendices}



\end{document}